\newcommand{\mb}{\mathbb}
\newcommand{\mc}{\mathcal}
\newcommand{\ov}{\overline}
\newcommand{\ora}{\overrightarrow}
\begin{document}
\title{Moduli space of twisted holomorphic maps with Lagrangian boundary condition: compactness}
\author{Guangbo Xu}

\newtheorem{thm}{Theorem}[section]
\newtheorem{lemma}[thm]{Lemma}
\newtheorem{cor}[thm]{Corollary}
\newtheorem{prop}[thm]{Proposition}

\theoremstyle{definition}
\newtheorem{defn}[thm]{Definition}

\theoremstyle{remark}
\newtheorem{rem}[thm]{Remark}

\maketitle

\begin{abstract}

Let $(X, \omega)$ be a compact symplectic manifold and $L$ be a Lagrangian submanifold. Suppose $(X, L)$ has a Hamiltonian $S^1$ action with moment map $\mu$. Take an invariant $\omega$-compatible almost complex structure, we consider tuples $(C, P, A, \varphi)$ where $C$ is a smooth bordered Riemann surface of fixed topological type, $P\to C$ is an $S^1$-principal bundle, $A$ is a connection on $P$ and $\varphi$ is a section of $P\times_{S^1} X$ satisfying
$$\ov\partial_A \varphi=0,\ \iota_\nu F_A+ \mu(\varphi)=c$$
with boundary condition $\varphi(\partial C) \subset P \times_{S^1}  L$. Here $F_A$ is the curvature of $A$ and $\nu$ is a volume form on $C$ and $c\in i{\mb R}$ is a constant. 

We compactify the moduli space of isomorphism classes of such objects with energy $\leq K$, where the energy is defined to be the Yang-Mills-Higgs functional
$$\left\| F_A \right\|_{L^2}^2+ \left\| d_A\varphi \right\|_{L^2}^2+ \left\| \mu(\varphi)-c \right\|_{L^2}^2.$$
This generalizes the compactness theorem of Mundet-Tian \cite{Mundet_Tian_2009} in the case of closed Riemann surfaces. 
\end{abstract}

\tableofcontents

\section{Introduction} 

The study of twisted holomorphic maps connects two topics in geometry: gauge theory over Riemann surfaces and Gromov-Witten theory. On the gauge-theoretical side, the objects are solutions of the vortex equations over Riemann surfaces(see \cite{Jaffe_Taubes}, \cite{Bradlow_1},\cite{Bradlow_2}, \cite{Garcia-Prada_1}, \cite{Garcia-Prada_2}, \cite{Garcia-Prada_3}); the basic object in Gromov-Witten theory is the pseudoholomorphic curves in almost K\"ahler manifolds. Independently, Ignasi Mundet\cite{Mundet_2003} and Cieliebak-Gaio-Salamon \cite{Cieliebak_Gaio_Salamon_2000} discovered the objects combining the two theories, which is called the twisted holomorphic maps, or gauged holomorphic maps. It has its physical counterpart called the gauged $\sigma$-model.

We briefly describe such objects. The precise definition will be given in Section 2. Let $(X, \omega)$ be a symplectic manifold endowed with a Hamiltonian $S^1$-action. Let $\mu: X\to i{\mb R}\simeq \left( {\rm Lie} S^1\right)^*$ be a moment map. Let $c\in i{\mb R}$ be a constant. In addition let's fix an invariant $\omega$-compactible almost complex structure $I$ on $X$. Let $\Sigma$ be a Riemann surface and $P\to \Sigma$ is an $S^1$-principal bundle. We consider pairs $(A, \varphi)$ where $A$ is a connection on $P$ and $\varphi: \Sigma\to P\times_{S^1} X$ is a section such that
\begin{align}\label{11}
\left\{ \begin{array}{ccc} \ov\partial_A \varphi & = & 0 , \\
 \iota_\nu F_A+ \mu(\varphi)& = & c. \end{array}\right. 
\end{align}
Here $\nu$ is a volume form on $\Sigma$, $F_A$ is the curvature 2-form of $A$ and $\mu(\varphi)$ is well-defined on $\Sigma$. The energy of such solutions is defined as
$$\left\| F_A \right\|_{L^2}^2 + \left\| \mu(\varphi) -c \right\|_{L^2}^2 + \left\| d_A \varphi \right\|_{L^2}^2.$$
And it turns out to be equal to the pairing between the equivariant symplectic form $[\omega- (\mu-c) ]\in H^2_{S^1}( X)$ and the fundamental class $[(P, \phi) ] \in H_2^{S^1}(X)$, if $(P, A,\phi)$ is a solution to the above equation.

We consider the moduli space of such objects(with energy bounded by a constant). As in Gromov-Witten theory, the moduli space is in general noncompact. The incompactness of the moduli space is caused by the bubbling-off phenomenon. Then, as in \cite{Mundet_2003}, we can use ``stable'' objects to compactify the moduli space and use the compactified moduli to define invariants
\begin{align}\label{invariant}
\Psi_{g, n}\left( \alpha_1, \ldots, \alpha_n| A\right)\in {\mb Q},\ A\in H_2^{S^1}(X),\  \alpha_i\in H^*_{S^1}(X).
\end{align}

However, one shall consider the case when the conformal structure of the curve(with marked points) varies in the Deligne-Munford moduli. This is necessary, in particular, if we want to study the fusion rule of the invariants and define quantum cup product on $H^*_{S^1}(X)$. In this situation, there is a new phenomenon absent in Gromov-Witten theory. Namely, when a node on the curve is forming, the two sides of the node don't always connect in the image of the section, but might be disjoint and connected by a gradient line of the function $h=-i\mu$, when the connection has a ``critical holonomy'' around the node. To define the objects in the compactification(i.e. stable maps) and to prove the compactness theorem, one needs to study delicately the behavior of twisted holomorphic maps from conformally long cylinders. The details was given in \cite{Mundet_Tian_2009}. 

In this paper we consider twisted holomorphic maps from bordered Riemann surfaces, and prove the compactness theorem of the corresponding moduli space. One impose suitable boundary conditions so that the system (\ref{11}) is elliptic. The boundary condition for $\varphi$ is that it should maps boundary of the Riemann surface to a Lagrangian submanifold $L$. Since there is an $S^1$-ambiguity of the value of $\varphi$, $L$ should be $S^1$-invariant. The boundary condition for the connection $A$ is essentially a gauge fixing condition(e.g. Coulomb gauge), so doesn't appear explicitly.

Compared with the case of closed Riemann surface, there are two more types of degenerations we have to consider. Namely, the shrinking of a boundary circle and shrinking of an arc. In the first case, we have to study behaviors of twisted holomorphic maps on annuli $\Delta(1, r)=\left\{r\leq |z|\leq 1 \right\}$ as $r\to 0$; this is called a type-2 boundary node in our context, and this is essentially the same as in the case of closed Riemann surfacc. In the second case, we have to study behaviors of twisted holomorphic maps on long strips.

\subsection{Notations} 

We view the group $S^1$ as the set of complex numbers with modulus 1, and sometimes identify $S^1$ with ${\mb R}/2\pi {\mb Z}$ via the map ${\mb R}/2\pi {\mb Z}\ni \theta\mapsto e^{i\theta}\in S^1$.

We will identify ${\rm Lie}S^1\simeq i{\mb R}\simeq \left( {\rm Lie}S^1 \right)^*$. The pairing between $i{\mb R}$ and ${\rm Lie}S^1\simeq i{\mb R}$ is given by the $\langle ia, ib\rangle =ab$($a, b\in {\mb R}$).

Throughout this paper, $(X, \omega)$ will be a compact symplectic manifold. We will fix an effective Hamiltonian $S^1$ action on $X$ with a moment map $\mu:X\to i{\mb R}$. For any $\lambda\in i{\mb R}$, we define
$$X^\lambda=\left\{ x\in X \left| e^{2\pi \lambda}\cdot x =x  \right. \right\}.$$ 
We denote by ${\mc X}\in \Gamma(TX)$ the infinitesimal action of $i$. Define the real valued function $h=-i\mu=\langle i, \mu\rangle$. The gradient of $h$ with respect to $g$ is equal to $I{\mc X}$.

We also take an $S^1$-invariant almost complex structure $I$ on $X$, compatible with $\omega$ in the sense that $g:=\omega(\cdot, I\cdot)$ is a Riemannian metric on $X$. Unless otherwise mentioned, any statement involving a metric on $X$ will implicitly refer to the metric $g$. Let $d$ be the distance function on $X$ defined by the metric $g$. Define the pseudodistance
$${\rm dist}_{S^1}\left( x, y \right):=\inf_{\theta\in S^1} d\left( x, \theta\cdot y \right).$$
If $A, B\subset X$ are subsets, define $${\rm diam}_{S^1} A:=\sup_{x, y\in A} {\rm dist}_{S^1} d(x, y)$$ and
$${\rm dist}_{S^1}:= \inf_{x\in A, y\in B} {\rm dist}_{S^1}(x, y).$$
We denote by $d_{\rm Hauss}$ the Hausdorff distance between subsets: $$ d_{\rm Hauss}(A, B):=\sup_{x\in A} d(x, B)+\sup_{y\in B} d(y, A).$$

We assume that $L\subset X$ is an embedded compact Lagrangian submanifold, which is invariant under the $S^1$ action. So ${\mc X}|_L\subset \Gamma(TL)$. Moreover, $\mu|_L$ is locally constant.

A {\it curve} will be a 1-dimensional complex manifold, which may have at worst nodal singularities, not necessarily compact, may or may not have boundary.

There is a positive number $\epsilon_X= \epsilon_{X, I, \omega, L}$ such that for any nontrivial $I$-holomorphic sphere or nontrivial $I$-holomorphic disk with boundary in $L$, its energy is at least $\epsilon_X$.

\subsection{Conventions}

We will use the letter $P$ to denote an $S^1$ principal bundle over some base $B$. The letter $A$ will denote a connection, or a connection 1-form on $P$. A section of the associated bundle $P\times_{S^1} X$ will be viewed either as a map from $B$ to $P\times_{S^1} X$, or as an equivariant map from $P$ to $X$. In the first case, we will use the letter $\varphi$ and in the second case we use $\Phi$. The correspondence $\Phi\leftrightarrow \varphi$ will be used implicitly. If we trivialize $P$, then the covariant derivative of $A$ can be written as $d+\alpha$, where $\alpha$ is an $i{\mb R}$-valued 1-form on $B$, and the section can be viewed as a map $\phi: B\to X$. The correspondence $A\leftrightarrow \alpha$ and $\varphi\leftrightarrow \phi$ will also be used everywhere in this paper.

The letter $\Sigma$ will denote a {\rm smooth} Riemann surface, with or without boundary. The letter $C$ will denote a more general curve, possibly with nodal singularities. ${\bf z}$(resp. ${\bf x}$) will denote an {\bf interior}(resp. {\bf boundary}) marked point set, and its elements will be denoted by $z^1, \ldots $(resp. $ x^1, \ldots $). ${\bf w}$ will denote the set of nodal points of $C$.

\subsection{Organization of the paper}

In section 2, we recall the foundational set up and necessary results proved in \cite{Mundet_Tian_2009}. In section 3, we consider the analogous set up in the case of bordered Riemann surfaces, and prove that the boundary singularities essentially can be removed. In section 4 and 5, we will prove several results which treat two types of boundary degeneration respectively. In section 6, we give the definition of stable maps in this case. In section 7, we show that in our moduli space, there are only finitely many different topological type of the underlying curve, if the energy is bounded by some constant. In section 8 we define the topology of the moduli space and in the final section we prove the compactness theorem. 

\subsection{Acknowledgements}

I want first to thank Professor Gang Tian, for introducing the me into this field and for constant help and encouragement. I want to thank Ignasi Mundet i Riera for helpful discussions and kindly answering my questions. I also want to thank Mohommad Farajzadeh Tehrani, for general discussions on symplectic geometry. During the preparation of this paper, Mundet and Tian shared their drafts \cite{Mundet_Tian_Draft} with the author. I also thank them for their generosity. I want to thank Conan Wu from whom I learned drawing pictures using Adobe Illustrator CS.

\section{Preliminaries and review of results in \cite{Mundet_Tian_2009}}

\subsection{Twisted holomorphic maps}

Let $C$ be a complex curve (not necessarily compact). Let $I_C$ denote the complex structure on $C$ and $g_C$ be a conformal metric. Let $P$ be an $S^1$ principal bundle over $C$. Let $Y:=P\times_{S^1} X$ and $\pi: Y\to C$ be the natural projection. Let $T^{vert}Y\subset TY$ be the subbundle of the tangent bundle of $Y$ of all vertical tangent vectors. If we choose an $S^1$-invariant almost complex structure $I_X$(resp. an $S^1$-invariant Riemannian metric $g_X$) on $X$, then $I_X$(resp. $g_X$) induces a complex structure(resp. a metric) on the bundle $T^{vert}Y$.

Any connection $A$ on $P$ induces a splitting $TY=\pi^* TC\oplus T^{vert}Y$. With this one obtain an almost complex structure $I_X(A)$(resp. a  metric $g(A)$) on $TY$ as the direct sum of $\pi^* I_C$(resp. $\pi^* g_C$) and the almost complex structure(resp. the metric) on $T^{vert}Y$.

Let $\varphi: C\to Y$ be a section of the bundle $Y\to C$, which is equivalent to a anti-equivariant map $\Phi: P\to X$. In the following we will consider the pair $(A, \varphi)$ where $A$ is a connection on $P$ and $\varphi$ is a section.

\subsubsection{Covariant derivatives and $\ov\partial$-operators}

The {\bf covariant derivative} $d_A\varphi$ of $\varphi$ with respect to $A$ is given by
$$ d_A\varphi =\pi_A\circ d\varphi \in \Omega^1 \left( C, \varphi^*T^{vert}Y \right), $$
where $\pi_A: TY\to T^{vert}Y$ is the projection induced by the splitting given by $A$.

Since $TC$ and $T^{vert}Y$ are both complex vector bundles, we can define the operator $\ov\partial_A$ by taking the $(0, 1)$-part of $d_A$. More precisely, for any section $\varphi: C\to Y$,
 $$\ov\partial_A \varphi ={1\over 2} \left( d_A\varphi + I_X \circ d_A \varphi \circ I_C \right).$$

If $\ov\partial_A \varphi =0$, we say that $\varphi $ is {\bf $A$-holomorphic }.

\subsubsection{Twisted holomorphic pairs}

Let $P=C\times S^1$ be the trivial $S^1$-bundle and let $\alpha\in \Omega^1(C, i{\mb R})$. Then $\alpha$ induces a connection $A_\alpha=d+\alpha$ on $P$. The associated bundle $Y=P\times_{S^1} X$ is trivial and a section of $Y$ is identified with a map $\phi: C\to X$. Then the pair $(\alpha, \phi)$ can be viewed as the local representation of a pair $(A_\alpha, \varphi)$  under this trivialization. We say that the pair $(\alpha, \phi)$ is a {\bf twisted holomorphic pair}, if the corresponding section of $\varphi: C\to Y$ is $A_\alpha$-holomorphic in the above sense.

More precisely, the covariant derivative $d_{A_\alpha}\varphi$ is(under the given trivialization)
$$ d_\alpha \phi=d\phi- i\alpha{\mc X}(\phi)\in \Omega^1(C, \phi^* TX) $$
and
$$ \ov\partial_\alpha \phi=\ov\partial_I \phi-{1\over 2} \left(i\alpha {\mc X}(\phi)+i (\alpha\circ I_C)(I_X{\mc X})(\phi)\right),$$
where ${\mc X}\in \Gamma(TX)$ is the infinitesimal action of $S^1$ on $X$.

\subsubsection{Vortex equation}

Let $(A, \varphi)$ be a pair where $A$ is a connection on $P$ and $\varphi: C\to Y$ be a section. Let $c\in i{\mb R}$ be a constant. Let $\nu$ be a volume form on $C$. The {\bf vortex equation} with respect to $\nu$ on the pair $(A, \phi)$ is
\begin{align}\label{vortex}
\iota_\nu F_A+\mu(\varphi)=c.
\end{align}
Here since both $\varphi$ and $\mu$ are $S^1$-equivariant, $\mu(\varphi): P\to i{\mb R}$ defines a function on $C$, and the equation makes sense.

\subsubsection{Gauge transformations}

Suppose $h: C\to S^1$ is a $C^1$-map. It can be viewed as a gauge transformation on any $S^1$-principal bundle $P$ over $C$, and its action on the pair $(A, \varphi)$ is given by
$$ h^*(A, \varphi)=\left( A-d\log h, h(\varphi) \right).$$

Under a local trivialization with respect to which $(A, \varphi)$ corresponds a pair $(\alpha, \phi)$, the gauge transformation can be expressed as
$$ h^*(\alpha, \phi)=\left( \alpha- d\log h, h\phi \right).$$

It is easy to check that both the holomorphicity equation $\ov\partial_A\varphi=0$ and the vortex equation (\ref{vortex}) are invariant under gauge transformations.

\subsection{Meromorphic connections}

\subsubsection{Meromorphic connections on the punctured disc}

Let $P\to {\mb D}^*$ be a principal $S^1$-bundle over the punctured unit disc. Then $P$ is trivial but the set of homotopy classes of trivializations $T(P)$ has a natural structure on ${\mb Z}$-torsor\footnote{Recall that if $\Gamma$ is a group, a $\Gamma$-torsor is a set $T$ with a free left action of $\Gamma$.}, given by the gauge transformations. Let $A$ be a connection on $P$. We say that $A$ is a {\bf meromorphic connection} if its curvature $F_A$ is bounded with respect to the standard metric on ${\mb D}$. Let $\epsilon>0$ and let $\gamma_\epsilon\in S^1$ be the holonomy of $A$ around the circle of radius $\epsilon$ centered at the origin.

\begin{lemma}\label{lemma21}
\begin{enumerate}
\item The limit ${\rm Hol}(A, 0):=\lim_{\epsilon\to 0}\gamma_\epsilon$ exists.

\item Any $\tau\in T(P)$ has a representative with respect to which the covariant derivative associated to $A$ can be written in polar coordinates $(r, \theta)$ as
$$ d_A=d+\alpha+\lambda d\theta, $$
where $\alpha$ is a smooth form with values in $i{\mb R}$ on ${\mb D}^*$ which extends continuously to ${\mb D}$. $\lambda\in i{\mb R}$ satisfies ${\rm Hol}(A, 0)=e^{2\pi \lambda}$. Moreover, the number $\lambda$ depends on $A$ and $\tau$ but not on the representative of $\tau$.

\end{enumerate}
\end{lemma}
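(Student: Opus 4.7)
The plan is to derive part (1) directly from Stokes' theorem and the bounded curvature hypothesis, and to prove part (2) by transferring to the half-cylinder model and carrying out a two-step gauge fix there.

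For part (1), I fix any trivialization of $P$ and write $d_A = d + \beta$ for an $i\mathbb{R}$-valued 1-form $\beta$ on $\mathbb{D}^*$, so that $\gamma_\epsilon$ is (up to sign) the exponential of $\oint_{|z|=\epsilon}\beta$. For $0<\epsilon_1<\epsilon_2\le 1$, Stokes' theorem gives
$$\oint_{|z|=\epsilon_2}\beta - \oint_{|z|=\epsilon_1}\beta = \int_{\epsilon_1\le|z|\le\epsilon_2} F_A,$$
whose absolute value is bounded by $\|F_A\|_{L^\infty}\pi(\epsilon_2^2 - \epsilon_1^2)$, which tends to $0$. So the family $\bigl\{\oint_{|z|=\epsilon}\beta\bigr\}$ is Cauchy in $i\mathbb{R}$; its limit $\Lambda$ exists, and exponentiating produces the limit of $\gamma_\epsilon$ in $S^1$.

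For part (2), I pick a representative of $\tau$, write $d_A = d + \beta$, and let $\lambda\in i\mathbb{R}$ be read off from $\Lambda$ so that $e^{2\pi\lambda} = \mathrm{Hol}(A,0)$. Setting $\tilde\beta := \beta - \lambda\,d\theta$, part~(1) upgrades to $\bigl|\oint_{|z|=r}\tilde\beta\bigr| = O(r^2)$. I then pull back to the half-cylinder $Z_+ = [0,\infty)\times S^1$ via $(t,\theta)\mapsto e^{-t+i\theta}$; the disc metric becomes the exponentially weighted cylindrical metric, so the coefficient of $F_A$ in $dt\wedge d\theta$ is of order $e^{-2t}$. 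On $Z_+$ I apply two null-homotopic gauge transformations: first the temporal gauge $h_0 = \exp\bigl(\int_0^t \tilde\beta_t(s,\theta)\,ds\bigr)$, which kills the $dt$-component of $\tilde\beta$; then a $\theta$-only map $h_1 = e^{ig(\theta)}$, chosen to kill the $t$-independent asymptote of the remaining $d\theta$-component. The zero-mean consequence of the choice of $\lambda$ is exactly what makes the angular primitive $g$ single-valued, hence $h_1$ null-homotopic. After both gauges, the curvature equation $\partial_t\tilde\beta_\theta - \partial_\theta\tilde\beta_t = O(e^{-2t})$ forces the remaining $d\theta$-component to decay like $e^{-2t}$; transferring back to Cartesian coordinates, this yields a 1-form whose coefficients are $O(r)$, extending continuously to $\mathbb{D}$ with value $0$ at the origin.

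The main obstacle is to verify that the two gauge transformations are genuinely single-valued maps $Z_+\to S^1$, that they remain null-homotopic (so they preserve $\tau$), and that the residual 1-form $\alpha$ extends \emph{continuously} rather than merely boundedly across the puncture. The essential analytic inputs are the exponential decay of the cylindrical curvature together with the $O(r^2)$ bound from part~(1), which supply both the zero-mean condition for $g$ and the $e^{-2t}$ decay of the final angular component. For the independence claim, any two representatives of $\tau$ differ by a null-homotopic $u = e^{if}$ with $f\colon\mathbb{D}^*\to i\mathbb{R}$ single-valued, so $\oint_{|z|=r}u^{-1}\,du = 0$ for every $r$ and the limit $\Lambda$ defining $\lambda$ is unchanged.
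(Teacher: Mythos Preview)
Your argument for part~(1) matches the paper's exactly. For part~(2) you take a genuinely different route. The paper stays on the disc: it invokes the Poincar\'e lemma to produce an explicit continuous primitive $\alpha=\bigl(\int_0^r F\,r'\,dr'\bigr)d\theta$ for the curvature $F_A$, verifies directly that this $\alpha$ extends continuously to the origin, and then notes that $A-\alpha-\lambda\,d\theta$ is closed with vanishing period on $\mathbb{D}^*$, hence exact; the gauge transformation $g=\exp h$ with $dh=A-\alpha-\lambda\,d\theta$ has winding number zero and puts $d_A$ in the desired form in one stroke. You instead pass to the half-cylinder and perform two successive null-homotopic gauges (temporal, then angular), with the exponential decay $O(e^{-2t})$ of the cylindrical curvature supplying both the zero-mean condition that makes the angular primitive single-valued and the $e^{-2t}$ decay of the residual $d\theta$-component. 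Both arguments are correct and of comparable length. Your cylinder picture is the model that governs the rest of the paper, so your method dovetails naturally with the later long-cylinder analysis; the paper's Poincar\'e-lemma argument is slightly more self-contained and avoids the coordinate change altogether.
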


\begin{proof}
\begin{enumerate}
\item By Stokes' formula
$$\gamma_\epsilon\cdot \gamma_{\epsilon'}^{-1}=\exp \int_{ A(\epsilon, \epsilon')} F_A$$
where $A(\epsilon, \epsilon')$ is the annulus with radius $\epsilon, \epsilon'$. By the boundedness of $F_A$, we derive that $\lim_{\epsilon\to 0} \gamma_\epsilon$ exists.

\item Take an arbitrary representative of $\tau$, then $A$ is viewed as a 1-form. Let $$\lambda={1\over 2\pi}\lim_{\epsilon\to 0} \int_{C_\epsilon} A.$$
Its independence of representatives of $\tau$ is obvious. 

On the other hand, by Poincar\'e lemma, there exists a continuous 1-form $\alpha$ on ${\mb D}$, such that $d\alpha=F_A$ in the weak sense. Indeed, suppose $F_A= F(x, y) dx dy= F(x, y) r dr d\theta$ with $F$ a bounded function, just define 
$$ \alpha=\left\{\begin{aligned} 0,\ {\rm origin}\\
\left(\int_0^r F r dr\right) d\theta, \ {\mb D}^*\end{aligned}\right.$$
Then $\alpha$ is continuous at the origin. Indeed,
$$\left(\int_0^r Fr dr\right) d\theta={1\over r^2} \left(\int_0^r F r dr\right) (ydx-xdy)$$
and $$\lim_{r\to 0^+} {1\over r^2} \left(\int_0^r F r dr\right)\to 0.$$

Then consider the 1-form $A-\alpha-\lambda d\theta$, we have
$$\int_{C_\epsilon} (A-\alpha-\lambda d\theta)=0$$
for all $\epsilon>0$. So there exists a smooth function $h: {\mb D}^*\to i{\mb R}$ such that $$dh=A-\alpha-\lambda d\theta.$$
The gauge transformation $g=\exp h$ has winding number 0 and hence induces another representative for the same $\tau$, with repect to which $d_A= d+\alpha+\lambda d \theta$. 

\end{enumerate} 
\end{proof}

\subsubsection{Meromorphic connections on marked smooth closed Riemann surfaces}

Let $(\Sigma, {\bf z})$ be a marked smooth closed Riemann surface. Let $P\to \Sigma\setminus {\bf z}$ be an $S^1$ principal bundle and $A$ is a connection on $P$. We say that $A$ is a {\bf meromorphic connection} if its curvature $F_A$ is bounded with respect to any smooth metric on $\Sigma$.

\subsection{Critical residues}

Let $F\subset X$ be the fixed point set of the $S^1$-action. Each connected component $F'\subset F$ is an embedded submanifold of $X$, and the $S^1$ action on $X$ induces an action on the normal bundle $N_{F'}\to F'$, which splits in weights as $N=\oplus_{\chi\in {\mb Z}} N_{\chi}$. Define
$$ {\rm weight}(F'):=\left\{\chi\in {\mb Z}: N_{\chi}\neq 0 \right\}, $$
and
$$ {\rm weight}(X):=\bigcup_{F'\subset F} {\rm weight}(F')\subset {\mb Z}.$$

The set of critical residues is equal to
$$ \Lambda_{cr}=\left\{ \lambda\in i{\mb R}| \exists\ w\in {\rm weight}(X)\ s.t.\ w\lambda \in i{\mb Z} \right\}. $$

\subsection{Twisted holomorphic maps from cylinders}

The key point in proving the compactness theorem in \cite{Mundet_Tian_2009} is to study the behavior of the twisted holomorphic maps when a circle in the underlying curves shrink to a node in different cases, or equivalently, to understand the behavior of twisted holomorphic maps from long cylinders. We briefly recall the theorems in \cite[Section 4]{Mundet_Tian_2009} on this.

For any positive real number $N$, set $C_N=(-N-2, N+2)\times S^1$ and denote by $(t, \theta)$ the coordinate of points of $C_N$. Let $Z_N=[-N, N]\times S^1$. Consider both $C_N$ and $Z_N$ the standard flat metric $g=dt^2+d\theta^2$ and the conformal structure $\partial_t=i\partial_\theta$.

\subsubsection{Cylinders with noncritical residues}

The following theorem describes how the maps on long cylinders will behave, if the energy density is small and the holonomy around the loop $\{t\}\times S^1$ is away from critical holonomies. The result is very similar to the case of usual pseudoholomorphic maps.

\begin{thm}\label{thm22}
For any noncritical $\lambda\in i{\mb R}\setminus \Lambda_{cr} $, there exist positive real numbers $K_1$, $\sigma_1$, $\epsilon_1$, depending continuously on $\lambda$, with the following properties. Let $N>0$ be a real number and $(\alpha, \phi): C_N\to X$ be a twisted holomorphic pair satisfying $\left\| \alpha-\lambda d\theta \right\|_{L^\infty(C_N)}<\epsilon_1 $, $\left\| d\alpha \right\|_{L^\infty(C_N)}<\epsilon_1 $ and $\left\| d_\alpha\phi \right\|_{L^\infty(C_N)}<\epsilon_1 $. Then for any $(t, \theta)\in Z_N$ we have
$$ \left| d_\alpha \phi(t, \theta) \right| \leq K_1  e^{\sigma_1 (|t|-N)} \left\| d_\alpha\phi \right\|_{L^\infty(Z_N)}. $$
In particular this implies
$$
{\rm diam}_{S^1}(\phi(Z_N)) < K_1 \left\| d_\alpha\phi \right\|_{L^\infty(Z_N)}.
$$
\end{thm}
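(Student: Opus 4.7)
This is a standard exponential decay estimate for twisted holomorphic maps on long cylinders, with the twist that the decay rate is extracted from how far the limiting holonomy $\lambda$ is from $\Lambda_{cr}$. My approach follows the familiar three-step pattern: gauge normalization, Bochner inequality, and maximum principle.

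First I would put things in a local normal form. Using gauge transformations as in Lemma 2.1, I bring $\alpha$ into the form $\lambda d\theta + (\text{small error})$ on a slightly shorter sub-cylinder (possibly at the cost of shrinking $\epsilon_1$). Since $|d_\alpha\phi|$ is also small, $\phi$ is nearly invariant under the vector field $\lambda\mc X$ and hence stays in a small tubular neighborhood of some connected component $F'\subset X^\lambda$. Picking $S^1$-equivariant normal coordinates near a nearest point of $F'$, the normal bundle decomposes into weight spaces $\bigoplus_\chi N_\chi$, $\chi\in\mathrm{weight}(F')$. Under this slicing the linearized twisted Cauchy--Riemann operator acts on the $k$-th $\theta$-Fourier mode of a $\chi$-weight section by multiplication by a constant of the form $k - i\chi\lambda$; the noncriticality $\lambda\notin\Lambda_{cr}$ is precisely the statement that these numbers are bounded below in modulus, uniformly in $k\in{\mb Z}$ and $\chi\in\mathrm{weight}(X)$, by some $2\sigma_1(\lambda)>0$ that depends continuously on $\lambda$ (the set of weights is finite).

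Second, I would derive a Bochner-type inequality for the energy density $e:=|d_\alpha\phi|^2$. Combining $\ov\partial_\alpha\phi=0$, the vortex equation $\iota_\nu F_\alpha+\mu(\phi)=c$, and the spectral gap above, a Weitzenb\"ock computation yields schematically
$$\Delta e \,\geq\, 4\sigma_1^2\, e \,-\, C\bigl(\|\alpha-\lambda d\theta\|_{L^\infty}^2 + \|d\alpha\|_{L^\infty}^2 + e\bigr)\cdot e.$$
Taking $\epsilon_1$ small enough, the error terms are absorbed into half the main term, giving $\Delta e \geq \sigma_1^2 e$ throughout $C_N$.

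Third, to convert this into the stated pointwise decay I would use a barrier argument. The function $b(t):=K_1\,\|d_\alpha\phi\|_{L^\infty(Z_N)}^2\bigl(e^{\sigma_1(t-N-2)}+e^{-\sigma_1(t+N+2)}\bigr)$ satisfies $\Delta b=\sigma_1^2 b$ and, for $K_1$ chosen large enough, dominates $e$ on the boundary $t=\pm(N+2)$ of $C_N$. Applying the maximum principle for $\Delta-\sigma_1^2$ to $e-b$ (which satisfies $\Delta(e-b)\geq\sigma_1^2(e-b)$ and hence admits no positive interior maximum) gives $e\leq b$ on $C_N$, which on $Z_N$ simplifies to the claimed bound after absorbing the two-unit buffer into $K_1$; square-rooting yields the pointwise estimate, and the diameter bound follows by integrating $|d_\alpha\phi|$ along any path in $Z_N$.

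The principal obstacle is the Bochner inequality with the sharp positive constant tied to the noncriticality. The crux is that the reduction to the linearized operator near $X^\lambda$ must be valid uniformly on the cylinder and must produce the correct spectral gap: this needs the $S^1$-equivariant tubular neighborhood theorem near $X^\lambda$ together with a compactness argument making the constants uniform over the finitely many components $F'\subset X^\lambda$ that $\phi$ could approach, as well as care in absorbing the quadratic error terms.
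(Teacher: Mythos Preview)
The paper does not prove this theorem; it is quoted from \cite{Mundet_Tian_2009}. The analogous results the paper does prove (Section~5) go via an $L^2$ three-interval convexity lemma (Lemma~\ref{lemma: x2}, modelled on \cite[Lemma~11.4]{Mundet_Tian_2009}) iterated along the cylinder and then upgraded to $L^\infty$ by elliptic estimates on unit pieces. Your Bochner/maximum-principle route is different, and as written it contains a real gap.

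The pointwise inequality $\Delta e\ge 4\sigma_1^2\,e$ for $e=|d_\alpha\phi|^2$ is false. Already in the linear model $X=\mb C$ with the weight-$1$ rotation and $\alpha=\lambda\,d\theta=i\mu\,d\theta$, a twisted holomorphic $\phi=\sum_k c_k\,e^{-(k+\mu)t+ik\theta}$ gives $e=2|W|^2$ and $\Delta e=8|W_1|^2$, where $W=\sum_k(k+\mu)c_k\,e^{-(k+\mu)t+ik\theta}$ and $W_1=\sum_k(k+\mu)^2 c_k\,e^{-(k+\mu)t+ik\theta}$. With two modes of rates $k_1+\mu=\sigma_1$ and $k_2+\mu=-\sigma_1$ one can arrange $W\neq 0$ and $W_1=0$ at a point, so $\Delta e=0<4\sigma_1^2 e$ there. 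The spectral gap controls individual Fourier modes, not their pointwise superposition; interference between growing and decaying modes destroys any pointwise differential inequality with a positive constant. A Weitzenb\"ock identity for holomorphic maps only yields $\Delta e\ge -Ce^2$; the positive term you want simply does not arise from the curvature identities.

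The repair is to integrate in $\theta$: with $f(t)=\int_{S^1}e(t,\theta)\,d\theta$ the cross terms vanish by orthogonality and one obtains $f''\ge 4\sigma_1^2 f$ in the linear model, with nonlinear errors absorbable for small $\epsilon_1$. ODE comparison on $[-N,N]$ then gives exponential decay of $f$, and interior elliptic estimates on unit cylinders recover the pointwise bound. This is effectively the mechanism behind the three-interval lemma in \cite{Mundet_Tian_2009}. A smaller issue: in your step~3 you compare $e$ to the barrier on $\partial C_N$, where only $e\le\epsilon_1^2$ is known, not $e\le\|d_\alpha\phi\|_{L^\infty(Z_N)}^2$; since the latter can be arbitrarily small, no uniform $K_1$ works there---the comparison must be made on $Z_N$.
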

Note that the constants are independent of the cylinder length $N$.

\subsubsection{Limit orbits of twisted holomorphic pairs on the punctured disk}
Consider the semi-infinite cylinder $C_+={\mb R}_{>0}\times S^1$ with coordinates $(t, \theta)$ and with the flat metric $dg^2=dt^2+ d\theta^2$ and conformal structure $\partial_t=i\partial_\theta$.

\begin{thm}\label{thm23}
Suppose that $(\alpha, \phi): C_+\to X$ is a smooth twisted holomorphic pair, and that $\|d\alpha\|_{L^\infty}<\infty$ and $\|d_\alpha\phi\|_{L^2}<\infty$. Then

(1) For any $t\in {\mb R}_{>0}$ let ${\rm Hol}(t)$ be the holonomy of $\alpha$ around the circle $\{t\}\times S^1$. As $t\to\infty$, ${\rm Hol}(t)$ converges to some ${\rm Hol}(\infty)\in S^1$.

(2) There is an $S^1$-orbit ${\mc O}\subset X$ such that $d(\phi(t, \theta), {\mc O})\to 0$ as $t\to \infty$.

(3) The points in the orbit ${\mc O}$ are all stabilized by ${\rm Hol}(\infty)$.

(4) We have $\left| d_\alpha\phi(t, \theta) \right|\leq K e^{-\sigma t}$ for some positive constants $K, \sigma$.
\end{thm}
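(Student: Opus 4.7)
The overall plan is the following: (1) use $\epsilon$-regularity for the vortex equations and the $L^2$-finiteness of $d_\alpha\phi$ to obtain pointwise decay $|d_\alpha\phi(t,\theta)|\to 0$ as $t\to\infty$; (2) feed this into Theorem 2.2 applied on long subcylinders of the end to force the $S^1$-Hausdorff diameter of $\phi(\{t\}\times S^1)$ to zero, extracting a limit $S^1$-orbit $\mathcal{O}$; (3) integrate the vortex equation to get the holonomy limit and its stabilizer relation with $\mathcal{O}$; and (4) linearize around $\mathcal{O}$ to obtain exponential decay.

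For (1), I would invoke a Bochner-type mean-value inequality for twisted holomorphic pairs: there is a universal $\epsilon_0>0$, depending on $\|d\alpha\|_{L^\infty}$ and the background geometry, such that
\[
\|d_\alpha\phi\|_{L^2([t-1,t+1]\times S^1)}<\epsilon_0\ \Longrightarrow\ |d_\alpha\phi(t,\theta)|\leq C\|d_\alpha\phi\|_{L^2([t-1,t+1]\times S^1)}.
\]
Since $\|d_\alpha\phi\|_{L^2(C_+)}<\infty$, the right-hand side tends to $0$ as $t\to\infty$, and so does the pointwise density. For (2), set $\lambda(t):=\frac{1}{2\pi}\int_{\{t\}\times S^1}\alpha\in i\mathbb{R}$; after a gauge change on each long window $[t-N-2,t+N+2]\times S^1$ one may assume $\alpha-\lambda(t)\,d\theta$ is small in $L^\infty$, with estimate controlled by $\|F_A\|_{L^\infty}$. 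If $\lambda(t)$ eventually lies in a compact subset of $i\mathbb{R}\setminus\Lambda_{cr}$, Theorem 2.2 directly yields ${\rm diam}_{S^1}\phi(Z_N(t))\leq K_1\|d_\alpha\phi\|_{L^\infty}\to 0$; otherwise one argues that $\phi$ is confined to a small $S^1$-invariant tubular neighbourhood of the weight-$w$ fixed locus associated to the limit critical residue, and a local Morse-Bott analysis there (replacing Theorem 2.2) again produces vanishing $S^1$-diameter. Passing to a subsequence $t_n\to\infty$ and using Arzela-Ascoli (equicontinuity comes from the pointwise decay of $d_\alpha\phi$), the loops $\theta\mapsto\phi(t_n,\theta)$ Hausdorff-converge to a subset of a single $S^1$-orbit $\mathcal{O}$, and uniqueness of $\mathcal{O}$ is the standard continuity-of-the-limit-set argument applied to the $S^1$-distance of $\phi(\{t\}\times S^1)$ to the orbit space.

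For (3) and (4), by Stokes and the vortex equation,
\[
\mathrm{Hol}(t_2)\mathrm{Hol}(t_1)^{-1}=\exp\int_{[t_1,t_2]\times S^1}(c-\mu(\phi))\,dt\wedge d\theta.
\]
The convergence $\phi(t,\cdot)\to\mathcal{O}$ forces the integrand to approach $c-\mu(\mathcal{O})$; convergence of the integral (and hence existence of $\lim\mathrm{Hol}(t)$) requires $\mu(\mathcal{O})=c$, and this equality is in fact enforced by the $L^2$-finiteness of $d_\alpha\phi$ through an integrated vortex/energy identity. Given $\mu(\mathcal{O})=c$, the limit $\mathrm{Hol}(\infty)\in S^1$ exists, proving (1). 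In a gauge where $\alpha$ is close to $\lambda_*\,d\theta$ on the end, the equivariance $\phi(t,\theta+2\pi)=\mathrm{Hol}(t)\cdot\phi(t,\theta)$, combined with both sides converging to the same point of $\mathcal{O}$ as $t\to\infty$, yields (3). Finally for (4), linearize the vortex system around the constant-orbit solution in a slice transverse to $\mathcal{O}$: the resulting operator on the circle is self-adjoint elliptic (of Morse-Bott type at the orbit) with spectrum bounded away from zero by some $\sigma>0$, and the classical exponential-decay lemma for gradient-type equations on cylindrical ends then yields $|d_\alpha\phi(t,\theta)|\leq Ke^{-\sigma t}$. The main obstacle is the critical-residue case of (2): Theorem 2.2 does not apply there, and the substitute argument must carefully combine the absence of bubbling, the confinement of $\phi$ to the fixed-locus neighbourhood, and the equivariant Morse-Bott structure to replicate the uniform diameter bound.
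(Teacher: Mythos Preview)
The paper does not actually prove this theorem: it is stated in Section~2 as a review of a result from \cite{Mundet_Tian_2009}, so there is no in-paper proof to compare against. I can only assess your proposal on its own merits.

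Your overall architecture (use $\epsilon$-regularity to get $|d_\alpha\phi|\to 0$ pointwise, then feed this into the long-cylinder estimates of Theorems~\ref{thm22} and \ref{thm24}--\ref{thm25} to identify a limit orbit, and finally run a Morse--Bott linearization for exponential decay) is the standard one and is essentially correct in spirit. However, there is a genuine gap: you repeatedly invoke the vortex equation, which is \emph{not} among the hypotheses. The theorem only assumes $\|d\alpha\|_{L^\infty}<\infty$ and $\|d_\alpha\phi\|_{L^2}<\infty$; there is no volume form $\nu$, no constant $c$, and no relation $\iota_\nu d\alpha+\mu(\phi)=c$. Thus your formula
\[
\mathrm{Hol}(t_2)\mathrm{Hol}(t_1)^{-1}=\exp\int_{[t_1,t_2]\times S^1}(c-\mu(\phi))\,dt\wedge d\theta
\]
is unavailable, and the subsequent argument that convergence of $\mathrm{Hol}(t)$ ``forces $\mu(\mathcal{O})=c$'' has no content here. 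More seriously, your proof of (1) collapses: with only $\|d\alpha\|_{L^\infty}<\infty$ in the cylindrical metric, $d\alpha$ need not be integrable over $[T,\infty)\times S^1$, so Stokes alone does not give a holonomy limit. The actual mechanism for (1) must come from the \emph{holomorphicity} of the pair together with the energy bound, not from the vortex equation; one shows (via the structure theorems for long cylinders with small energy, in both the noncritical and critical residue cases) that the residue $\lambda(t)$ stabilizes, which is a more delicate argument than you have sketched.

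A second, smaller issue: in (2) you split into ``$\lambda(t)$ stays away from $\Lambda_{cr}$'' versus ``$\lambda(t)$ approaches a critical residue'', but you have not justified why $\lambda(t)$ has a limit at all at this stage of the argument---that is precisely statement (1), which you are trying to prove. You need either to prove (1) first by an independent argument, or to run the dichotomy subsequentially and then upgrade to full convergence using the exponential decay in (4).
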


\subsubsection{Connections in balanced temporal gauge}

Let $P$ be a principal circle bundle over a cylinder $C=(p, q)\times S^1$ with $p<q$. Let $A$ be a connection on $P$. We say that a given trivialization of $P$ puts $A$ in {\bf temporal gauge} if, with respect to this trivialization, the covariant derivative $d_A$ takes the form $d+ad\theta$, where $a$ is an $i{\mb R}$-valued function. Let $\tau=(p+q)/2$. We say that the trivialization is in {\bf balanced temporal gauge} if the restriction of $a$ to the middle circle $\{\tau\}\times S^1$ is equal to some constant $\lambda\in i{\mb R}$.

Given any connection $A$ on $P$, there exist trivializations of $P$ which put $A$ in balanced temporal gauge, and any two such trivializations differ by a constant gauge transformation and a gauge transformation of the form $(t, \theta)\mapsto \theta^k$(the latter transformation changes $\lambda$ to $\lambda+ik$).

\subsubsection{Cylinders with small energy and nearly critical residue}

As before, for any positive real number $N$ we denote $C_N=(-N-2, N+2)\times S^1$ and $Z_N=[-N, N]\times S^1$ with the standard product metric and the conformal structure. We denote by $(t, \theta)$ the coordinates of points in $C_N$. Let $\nu=fdt\wedge d\theta$ be a volume form on $C_N$, and let $\eta$ be some positive real number. We say that $\nu$ is {\bf exponentially $\eta$-bounded} if $|f|+|\nabla f|<\eta e^{|t|-N}$.

\begin{thm}\label{thm24} Fix some $c\in i{\mb R}$. There exist positive real numbers $\epsilon_2, \eta_0, \sigma_2, K_2$ with the following property. Let $N>0$ be a real number and let $(\alpha, \phi): C_N\to X$ be a twisted holomorphic pair. Suppose that $\alpha$ is in balanced temporal gauge and that there exists some critical residue $\lambda\in i{\mb R}$ such that
$$
\left\| \alpha-\lambda d\theta \right\|_{L^\infty}<\epsilon_2, \ and\ \left\| d_\alpha\phi \right\|_{L^\infty}<\epsilon_2.
$$
Then there exist maps $\psi: [-N, N]\to X^\lambda$ and $\phi_0: Z_N\to TX$ with $\phi_0(t, \theta)\in T_{e^{-\lambda \theta}\psi(t)} X$, such that
$$
\phi(t, \theta)=e^{-\lambda \theta}\exp_{\psi(t)} \left(e^{\lambda \theta} \phi_0(t, \theta) \right)\  and\  \int e^{\lambda\theta} \phi_0(t, \theta)d\theta=0
$$
where $\exp_x: T_x X\to X$ denotes the exponential map on $X$. Furthermore,

(1) If $\nu$ is an exponentially $\eta$-bounded volume form on $C_N$ for some $\eta<\eta_0$ and the vortex equation $\iota_\nu d\alpha+\mu(\phi)=c$ is satisfied, then for any $(t, \theta)\in Z_N$
$$
\left| \phi_0(t, \theta) \right|<K_2 e^{\sigma_2(|t|-N)}.
$$

(2) If $d\alpha=0$, then for any $(t, \theta)\in Z_N$
$$
\left| \phi_0(t, \theta) \right|<K_2 e^{\sigma_2(|t|-N)}.
$$
\end{thm}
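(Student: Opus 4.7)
The strategy has three stages: (i) construct the decomposition via a tubular-neighbourhood / implicit-function argument; (ii) derive an effective equation for $\phi_0$ whose leading operator has a spectral gap; (iii) extract exponential decay from a second-order differential inequality for the $L^2$-mass of $\phi_0$ on circles. For (i), I would consider the untwisted map $\tilde\phi(t,\theta):=e^{\lambda\theta}\cdot\phi(t,\theta)$, using the $S^1$ action on $X$. The hypotheses $\|\alpha-\lambda d\theta\|_{L^\infty}<\epsilon_2$ and $\|d_\alpha\phi\|_{L^\infty}<\epsilon_2$ give $|\partial_\theta\tilde\phi|\leq C\epsilon_2$ pointwise, so each slice $\tilde\phi(t,\cdot)$ lies in a small $C^0$-neighbourhood of a single point of $X$; because $\lambda$ is critical and $\tilde\phi$ is, up to the $e^{2\pi\lambda}$-equivariance, approximately $S^1$-invariant, that point must lie in a tubular neighbourhood of $X^\lambda$. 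Applying the implicit function theorem to $y\mapsto\int_{S^1}\exp_y^{-1}\tilde\phi(t,\theta)\,d\theta$, $y\in X^\lambda$ (whose linearization at the approximate center is the identity on $T_y X^\lambda$), produces unique $\psi(t)\in X^\lambda$ and $\tilde\phi_0(t,\cdot)\in T_{\psi(t)}X$ with $\int\tilde\phi_0\,d\theta=0$ and $\tilde\phi(t,\theta)=\exp_{\psi(t)}\tilde\phi_0(t,\theta)$. Pulling $\tilde\phi_0$ back by the differential of $e^{\lambda\theta}$ yields the asserted $\phi_0$, and the centering condition translates into $\int e^{\lambda\theta}\phi_0\,d\theta=0$.

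For step (ii), I would substitute the ansatz into $\ov\partial_\alpha\phi=0$ (and, in case (1), into the vortex equation $\iota_\nu F_A+\mu(\varphi)=c$). Averaging the resulting equation over $S^1$ and projecting onto $T_{\psi(t)}X^\lambda$ gives an ODE for $\psi$ whose right-hand side is quadratic in $\tilde\phi_0$ plus terms involving $a-\lambda$ and $d\alpha$. The complementary projection onto mean-zero $\theta$-Fourier modes gives a Cauchy-Riemann type PDE
\begin{equation*}
\partial_t\tilde\phi_0+I\partial_\theta\tilde\phi_0={\mc R},
\end{equation*}
where ${\mc R}$ gathers quadratic terms in $\tilde\phi_0$, the deviation $a-\lambda$, the derivative $\partial_t\psi$, and (in case (1)) a curvature contribution from $d\alpha=f(c-\mu(\phi))\,dt\wedge d\theta$. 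Crucially, the normalization $\int\tilde\phi_0\,d\theta=0$ removes the zero $\theta$-Fourier mode, so the operator $\partial_t+I\partial_\theta$ acts on functions whose nonzero $\theta$-Fourier modes carry eigenvalues of modulus $\geq 1$; this is the spectral gap that will drive the exponential decay, and it allows one to pick any $\sigma_2<1$.

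For (iii), I would set $E(t):=\tfrac12\int_{S^1}|\tilde\phi_0(t,\theta)|^2\,d\theta$. Using the effective PDE, integration by parts, and the spectral gap, a computation yields
\begin{equation*}
E''(t)\geq \sigma_2^2\,E(t)-{\mc E}(t),
\end{equation*}
with an error ${\mc E}(t)$ controlled by $\epsilon_2\,E(t)$, $|\partial_t\psi|^2$, and the external data. In case (2), $d\alpha=0$ combined with balanced temporal gauge forces $\alpha\equiv\lambda\,d\theta$, so $a-\lambda\equiv 0$ and ${\mc E}$ is purely cubic in $\tilde\phi_0$; for $\epsilon_2$ small a maximum-principle comparison with $\cosh(\sigma_2 t)$ then yields $E(t)\leq(E(-N)+E(N))e^{\sigma_2(|t|-N)}$. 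In case (1), the vortex contribution gives $\partial_t a=f(c-\mu(\phi))$ so that $|a-\lambda|+|d\alpha|\leq C\eta e^{|t|-N}$, producing an extra error of the same size; choosing $\eta_0$ small enough absorbs it into a shifted comparison barrier. The pointwise bound on $|\phi_0(t,\theta)|$ then follows from the $L^2$ estimate by interior elliptic regularity for the Cauchy-Riemann system.

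The main obstacle is step (ii): verifying that after the Lyapunov-Schmidt reduction the equation for $\tilde\phi_0$ retains a usable spectral gap. One must check that the coupled $\psi$-dynamics, the deviation $a-\lambda$, and (in case (1)) the curvature term do not create errors that overwhelm the linear coefficient $\sigma_2^2\,E(t)$; this is precisely where the smallness of $\epsilon_2$ and $\eta$ is used in an essential way, along with the critical-residue hypothesis which makes $X^\lambda$ the correct manifold on which to center the decomposition.
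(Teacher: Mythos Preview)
Your proposal is essentially correct, but it follows a different route from the one the paper adopts. Note first that this theorem is not proved in the present paper at all: it is quoted from \cite{Mundet_Tian_2009}, and the paper only reproduces the argument when proving the reflected analogue (Theorem~\ref{thm: x1}). That argument, and the one in \cite{Mundet_Tian_2009}, proceeds via a \emph{three-interval lemma}: one shows, by a contradiction/compactness argument (rescale a violating sequence by $\|d\xi_u\|_{L^2}$, pass to an $L^2$-limit which is genuinely holomorphic, and invoke the sharp three-interval inequality for holomorphic cylinders, Lemma~\ref{lemma: 11.4}), that
\[
\|\phi_0\|_{L^2(Z_{II})}^2 \leq \tfrac13\bigl(\|\phi_0\|_{L^2(Z_I)}^2+\|\phi_0\|_{L^2(Z_{III})}^2\bigr)
\]
on every unit-length triple of adjacent subcylinders, and then iterates this discrete convexity to obtain exponential decay. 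Your approach instead derives the effective Cauchy--Riemann equation for $\tilde\phi_0$ directly, uses the explicit spectral gap of $\partial_t+I\partial_\theta$ on mean-zero functions to obtain the continuous convexity inequality $E''\geq \sigma_2^2 E-\mathcal E$, and concludes by a maximum-principle comparison. Both arguments exploit the same spectral gap; yours is more direct and quantitative, while the paper's soft compactness argument avoids tracking the nonlinear error terms explicitly (at the cost of not yielding an explicit $\sigma_2$).

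One small correction: in case~(2) you assert that $d\alpha=0$ together with balanced temporal gauge forces $\alpha\equiv\lambda\,d\theta$. This is not quite right: balanced temporal gauge gives $\alpha=a\,d\theta$ with $a|_{\{0\}\times S^1}=\lambda_0$ constant, and $d\alpha=0$ then forces $a\equiv\lambda_0$, but $\lambda_0$ need not equal the critical residue $\lambda$; one only knows $|\lambda_0-\lambda|<\epsilon_2$ (compare Theorem~\ref{thm25}). This does not break your argument, since the resulting error $|a-\lambda|<\epsilon_2$ is small and can be absorbed exactly as you absorb the other $\epsilon_2$-sized terms, but the sentence as written is inaccurate.
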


\begin{rem} We explain the expression appeared in this statement. Suppose $\lambda=i p/q$ where $p,q$ are relatively prime integers with $q>0$. Then $e^{\lambda \theta}$ is well-defined up to $e^{2k\pi i/ q}$. Since $\psi(t)\in X^\lambda$, which is the fixed point set of $\left\{ e^{2k\pi i/q} \right\}$, and the exponential map is equivariant, we have $e^{-2\pi i/q} \exp_{\psi(t)}\left( e^{2\pi i/q} v \right)=\exp_{\psi(t)}(v)$. The integral $\int e^{\lambda \theta} \phi_0(t, \theta) d\theta$ is understood as integrating over $[0, 2\pi q]$.
\end{rem}

\begin{thm}\label{thm25}
The same $K_2, \sigma_2$ satisfy following property. Let $N>0$ be a real number, $\lambda\in \Lambda_{cr}$ and $\left( \alpha, \phi \right): C_N\to X$ be a twisted holomorphic pair satisfying the hypothesis of last theorem for $\epsilon_2$. Assume furthermore that $\alpha$ is in balanced temporal gauge, so $\alpha= ad\theta$ and $a|_{\{0\}\times S^1}=\lambda_0\in i{\mb R}$ and $|\lambda_0-\lambda|<\epsilon$.

(1) Suppose $\nu$ is an exponentially $\eta$-bounded volume form on $C_N$ and that the vortex equation $\iota_\nu d\alpha+\mu(\phi)=c$ is satisfied. Then for any $t\in [-N, N]$,
$$
\left| \psi'(t)+i(\lambda_0-\lambda) \nabla h(\psi(t)) \right|< K_2 e^{\sigma_2(|t|-N)}.
$$

(2) Suppose that $d\alpha=0$. Then for any $t\in [-N, N]$,
$$
\left| \psi'(t)+i(\lambda_0-\lambda) \nabla h(\psi(t)) \right|< K_2 e^{\sigma_2(|t|-N)}.
$$
\end{thm}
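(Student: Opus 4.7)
The plan is to derive an averaged ODE for $\psi$ from the equation $\ov\partial_\alpha\phi=0$, and then to bound its error terms using Theorem~\ref{thm24}. The key observation is that the pointwise $\theta$-average of the holomorphic equation, after peeling off the fluctuation $\phi_0$, yields precisely the claimed equation for $\psi'(t)$. First, pass to the gauge provided by Theorem~\ref{thm24}: the gauge transformation $h=e^{\lambda\theta}$ replaces $(\alpha,\phi)$ by $(\tilde\alpha,\tilde\phi)$ with $\tilde\alpha=(a-\lambda)d\theta$ and $\tilde\phi(t,\theta)=\exp_{\psi(t)}(v(t,\theta))$, where $v(t,\theta):=e^{\lambda\theta}\phi_0(t,\theta)\in T_{\psi(t)}X$. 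Theorem~\ref{thm24} gives $|v(t,\theta)|\leq K_2 e^{\sigma_2(|t|-N)}$ on $Z_N$, and the constraints become $\int_0^{2\pi q} v(t,\theta)\,d\theta=0$ and $v(t,\theta+2\pi q)=v(t,\theta)$ (writing $\lambda=ip/q$).

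Expand $\ov\partial_{\tilde\alpha}\tilde\phi=0$ to first order in $v$ around $v=0$. Using $\exp_{\psi(t)}(v)=\psi(t)+v+O(|v|^2)$ and carefully tracking the $\partial_t$-derivative of the moving basepoint $\psi(t)$, one arrives at a pointwise identity of the form
$$
\psi'(t) + i(a(t,\theta)-\lambda)\nabla h(\psi(t)) + \partial_t v - I_X\partial_\theta v + Q(t,\theta,v) = 0,
$$
where $|Q|\leq C(|v|^2+|a-\lambda|\,|v|)$ and the coefficients of $Q$ are smooth in $\psi(t)$ and $a-\lambda$. Integrating in $\theta$ over $[0,2\pi q]$ kills the $\partial_t v$-term by the mean-zero constraint, the $\partial_\theta v$-term by periodicity, and averages the $a-\lambda$ factor:
$$
\psi'(t) + i\bar{b}(t)\nabla h(\psi(t)) = O\bigl(\|v(t,\cdot)\|_{L^\infty}^2 + |\bar b(t)|\,\|v(t,\cdot)\|_{L^\infty}\bigr),
$$
with $\bar b(t):=\frac{1}{2\pi q}\int_0^{2\pi q}(a(t,\theta)-\lambda)\,d\theta$.

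Finally, control $\bar b(t)-(\lambda_0-\lambda)$. In case~(2), $d\alpha=0$ gives $\partial_t a=0$, and balanced temporal gauge pins $a\equiv\lambda_0$, so $\bar b(t)\equiv\lambda_0-\lambda$ identically. In case~(1), $F_A=(\partial_t a)\,dt\wedge d\theta$ and the vortex equation give $\partial_t a=(c-\mu(\phi))f$ for $\nu=f\,dt\wedge d\theta$; the exponential bound $|f|\leq\eta e^{|t|-N}$ and compactness of $X$ give $|\partial_t a|\leq C\eta e^{|t|-N}$, and integrating from $t=0$, where $a\equiv\lambda_0$, yields $|\bar b(t)-(\lambda_0-\lambda)|\leq C\eta e^{|t|-N}$. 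Substituting into the averaged ODE and invoking $|v|\leq K_2 e^{\sigma_2(|t|-N)}$ from Theorem~\ref{thm24} produces the claimed estimate, provided $\eta_0$ is chosen small enough and $\sigma_2$ is taken $\leq 1$ so that $e^{|t|-N}\leq e^{\sigma_2(|t|-N)}$.

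The main obstacle is the pointwise first-order expansion of $\ov\partial_{\tilde\alpha}\tilde\phi=0$ with the moving basepoint $\psi(t)\in X^\lambda$: one must isolate the zeroth-order contribution $\psi'(t)+i(a-\lambda)\nabla h(\psi)$, ensure that all $O(|v|)$ corrections either are $\theta$-total derivatives (killed on averaging) or carry a small factor of $|a-\lambda|$, and control the projection onto $T_{\psi(t)}X^\lambda$ by the same quantities. Once this identity is in hand, cases~(1) and~(2) are settled by using the vortex equation (or its absence $d\alpha=0$) to bound $\bar b(t)-(\lambda_0-\lambda)$, with no further input beyond Theorem~\ref{thm24}.
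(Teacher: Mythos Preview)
The paper does not give its own proof of this theorem; it is recalled from \cite{Mundet_Tian_2009} (their Theorem~4.4) in the preliminaries section. The closest thing in the paper is the proof of the analogous Theorem~\ref{thm56} via the lemma preceding it, and that argument is exactly your strategy: differentiate $\exp_\psi\phi_0=\phi$ in $t$, integrate over $S^1$ using $\int\phi_0\,d\theta=0$, then use the holomorphic equation to rewrite $\int\partial_t\phi\,d\theta$ in terms of $\phi_0$ and $\alpha$. So your approach is essentially the same as the one the paper invokes.

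One point worth tightening: your error term $Q$ as stated omits contributions of the form $O(|v|\,|\psi'|)$ and $O(|v|\,|\nabla v|)$ that arise from the derivative of the exponential map at nonzero $v$ (and from the variation of $I_X$, ${\mc X}$ away from $\psi$). The paper's lemma handles this by writing $F=D_xE$, $G=D_vE$ for the exponential map $E(x,v)=\exp_x^g v$ and using $|F-{\rm Id}|,\,|G-{\rm Id}|\leq K_1|v|$; the resulting $O(|v|\,|\psi'|)$ term is then absorbed into the left-hand side because $|v|<\epsilon_2$ is small. Your write-up implicitly assumes a linear ambient space when you expand $\exp_\psi v=\psi+v+O(|v|^2)$; this is where the bookkeeping needs care on a general $X$, but it does not affect the outcome.
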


\subsubsection{Chains of gradient segments}

A {\bf monotone chain of gradient segments} in $X$ is a compact subset ${\mc T}\subset X$ which can be written as ${\mc T}=\rho(T)$, where $T\subset {\mb R}$ is a compact interval(possibly consisting of a unique point) and $\rho: T\to X$ is a continuous map such that
\begin{itemize}
\item $\rho^{-1}(F)$ is a finite set;

\item $\rho$ is smooth at $T\setminus \rho^{-1}(F)$;

\item for any $t\in T\setminus \rho^{-1}(F)$, we have $\rho'(t)=-s\nabla h(\rho(t))$ for some positive real number $s$ depending on $t$.
\end{itemize}

Define the {\bf beginning} of ${\mc T}$ to be $b({\mc T}):=\rho(\inf T)$ and the {\bf end} of ${\mc T}$ to be $e({\mc T}):=\rho(\sup T)$. We say that ${\mc T}$ is {\bf degenerate} if $b({\mc T})=e({\mc T})$. Let ${\mc T}(X)$ be the set of chains of gradient segments, with the topology induced by the Hausdorff distance between subsets. The space ${\mc T}(X)$ is compact and carries a continuous action of $S^1$ induce by the action on $X$. Define the set of {\bf infinite monotone chains of gradient segments} ${\mc T}^\infty(X)\subset {\mc T}(X)$ as the set consisting of those ${\mc T}$ such that both the beginning $b({\mc T})$ and the end $e({\mc T})$ belong to the fixed point set $F$.

\begin{thm}\label{thm26}
Let $\left\{ \psi_u: S_u\to X \right\}_{u\in {\mb N}}$ be a sequence of maps, where each $S_u$ is a closed interval in ${\mb R}$ and $\psi_u$ is a smooth map. Suppose that the length of $S_u$ tends to infinity, and that there is a sequence of positive real numbers $l_u\to 0$ and positive constants $K, \sigma$ with the property that for any $u$ and $t\in S_u$, we have
$$
\left| \psi_u'(t)+ l_u\nabla h(\psi_u(t)) \right|\leq K e^{-\sigma d(t, \partial S_u)}.$$
For any $\tau>0$ let $S_u^\tau=\left\{ t\in S_u| d(t, \partial S_u)\geq \tau \right\}$. Then there exists a monotone chain of gradient segments ${\mc T}\in {\mc T}(X)$ and an infinite sequence $\{u_i\}\subset {\mb N}$ such that $$
\lim_{\tau\to\infty} \left(\limsup_{i\to \infty} d_{Hauss} (\psi_{u_i}(S_{u_i}^\tau), {\mc T})\right)=0,$$ $$
\lim_{\tau\to \infty} \left(\limsup_{i\to\infty} d(\psi_{u_i} (\inf S_{u_i}^\tau), b(\mc T))\right)=\lim_{\tau\to\infty}\left(\limsup_{i\to\infty} d(\psi_{u_i} (\sup S_{u_i}^\tau), e(\mc T))\right)=0.$$
\end{thm}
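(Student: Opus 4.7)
The plan is to rescale time by $l_u$, turning the hypothesis into an approximate negative gradient flow of $h$, and then extract a limit by an Arzel\`a--Ascoli argument whose image is identified with the chain ${\mc T}$. The first step is a uniform energy bound: differentiating $h\circ\psi_u$ and using the hypothesis gives
\[
\frac{d}{dt}\bigl(h(\psi_u(t))\bigr) = -l_u |\nabla h(\psi_u(t))|^2 + \bigl\langle \nabla h(\psi_u(t)),\ \psi_u'(t) + l_u \nabla h(\psi_u(t))\bigr\rangle;
\]
integrating over $S_u$, using $|h|\le \|h\|_\infty$ on the compact $X$ and $\int_{S_u} e^{-\sigma d(t,\partial S_u)}\,dt \le 2/\sigma$, one gets $\int_{S_u} l_u |\nabla h(\psi_u(t))|^2 \, dt \le E_0$ for some $E_0$ independent of $u$.

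Next, set $\tilde\psi_u(s):=\psi_u(s/l_u)$ on $\tilde S_u := l_u S_u$. The hypothesis becomes
\[
\bigl|\tilde\psi_u'(s) + \nabla h(\tilde\psi_u(s))\bigr| \le l_u^{-1} K e^{-\sigma d(s, \partial \tilde S_u)/l_u},
\]
whose right-hand side tends to $0$ on any compact subset of the interior of $\tilde S_u$ as $u\to \infty$, since $l_u\to 0$ makes the exponential dominate $l_u^{-1}$. On such a compact set, $|\tilde\psi_u'|\le \|\nabla h\|_\infty + o(1)$, so a diagonal Arzel\`a--Ascoli argument extracts a subsequence $\{u_i\}$ with a locally uniform limit $\tilde\psi$ satisfying $\tilde\psi'=-\nabla h(\tilde\psi)$. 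If $l_u|S_u|$ is bounded, then after shifting, this limit lives on a bounded interval and its image is a single smooth gradient segment, which is taken to be ${\mc T}$.

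In the remaining case $l_u|S_u|\to\infty$, I combine the energy bound with the fact that $|\nabla h|\ge c(\epsilon) > 0$ on $\{x\in X: d(x,F)\ge\epsilon\}$: the rescaled time spent off $N_\epsilon(F)$ is $\le E_0/c(\epsilon)^2$, and the number of maximal ``excursions'' of $\psi_u$ outside $N_\epsilon(F)$ is uniformly bounded because $h\circ\psi_u$ is approximately monotone decreasing and $h(F)$ is finite. Each excursion produces, by Arzel\`a--Ascoli applied to $\tilde\psi_u$ centered in it, a gradient-flow segment in the limit; each ``visit'' to a tubular neighborhood of a critical component $F_j$ concentrates in the limit onto a single point of $F_j$, by a local Morse--Bott analysis in equivariant slice coordinates $(y, v_+, v_-)$ around $F_j$ in which $\nabla h$ has zero $y$-component, so $|\dot y|\le |E_u|$ and $\int_{I}|\dot y|\,dt$ is exponentially small in the distance from $I$ to $\partial S_u$. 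Concatenating the segments and break-points yields a continuous $\rho: T\to X$ from a compact interval $T$ with $\rho^{-1}(F)$ finite, the required monotone chain ${\mc T}$.

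The main obstacle will be the Morse--Bott concentration step: verifying that each visit to $N_\epsilon(F_j)$ shrinks in the limit to a single point of $F_j$, rather than to a nontrivial subset in the directions tangent to $F_j$ (which generically have positive diameter). Once that is in place, Hausdorff convergence $d_{\rm Hauss}(\psi_{u_i}(S_{u_i}^\tau), {\mc T})\to 0$ as $i\to\infty$ and then $\tau\to\infty$ follows by combining uniform convergence off $N_\epsilon(F)$ with containment in $N_\epsilon(F)$ inside (then letting $\epsilon\to 0$); and endpoint convergence of $\psi_{u_i}(\inf S_{u_i}^\tau)$, $\psi_{u_i}(\sup S_{u_i}^\tau)$ to $b({\mc T})$, $e({\mc T})$ follows by applying the same analysis to the left-most and right-most subintervals of $S_{u_i}^\tau$.
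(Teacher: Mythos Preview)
The paper does not prove Theorem~\ref{thm26}. It is stated in Section~2, which is explicitly a review of results from \cite{Mundet_Tian_2009}; the theorem is quoted from that reference without proof, and is only invoked later (in the construction of chains of gradient segments in Section~9). So there is no proof in this paper to compare your attempt against.

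That said, your outline follows the standard route for broken-trajectory compactness in Morse--Bott situations: an energy identity, a rescaling to an approximate gradient flow, Arzel\`a--Ascoli on the excursions away from the critical set, and a concentration argument near each critical component. This is in the same spirit as the argument in \cite{Mundet_Tian_2009}. Two remarks on your sketch: first, your bound on the \emph{number} of excursions needs a little more than approximate monotonicity of $h\circ\psi_u$ together with finiteness of $h(F)$---that only bounds transitions between \emph{different} components of $F$; to rule out many short excursions in and out of the same $N_\epsilon(F_j)$ you should combine the energy bound with a lower bound on the $h$-drop per excursion (or on its rescaled duration). Second, your Morse--Bott step assumes slice coordinates in which $\nabla h$ has zero $y$-component; such coordinates do not exist in general, so you should instead argue that the $y$-component of $\nabla h$ is $O(|v|^2)$ near $F_j$ and control $\int |\dot y|$ via the energy bound plus the exponential decay of the error. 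For a full argument you would consult \cite{Mundet_Tian_2009} directly.
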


\section{Twisted holomorphic pairs from bordered curves with Lagrangian boundary condition}

\begin{defn} A bordered Riemann surface is a compact 2-manifold with nonempty boundary equipped with a complex structure.
\end{defn}

\begin{rem} A bordered Riemann surface $\Sigma$ is canonically oriented by the complex structure. Its boundary $\partial\Sigma$ is the disjoint union of its (finitely many) connected componets $B_i$, with induced orientation. 
\end{rem}

A bordered Riemann surface is topologically a sphere with $g\geq 0$ handles and with $h>0$ discs removed. Such a bordered Riemann surface is said to be of type $(g, h)$.

\begin{defn} Let $h$ be a positive integer, $g, n$ be nonnegative integers, and $\overrightarrow{m}=(m^1, \ldots, m^h)$ be an $h$-tuple of nonnegative integers. A {\bf marked bordered Riemann surface of type $(g, h)$ with $(n, \overrightarrow{m})$ marked points} is an $(h+3)$-tuple
$$\left( \Sigma, {\bf B}; {\bf z}; {\bf x}^1, \ldots, {\bf x}^h \right)$$
where
\begin{itemize}
\item $\Sigma$ is a bordered Riemann surface with type $(g, h)$;

\item ${\bf B}=(B^1, \ldots, B^h)$, where the $B^i$'s are connected components of $\partial \Sigma$ oriented by the complex structure;

\item ${\bf z}=(z_1, \ldots, z_n)$ is an $n$-tuple of distinct points in $\Sigma^0$, where are called {\bf interior marked points};

\item ${\bf x^i}=(x^i_1, \ldots, x^i_{m^i})$ is an $m^i$-tuple of distinct points on the circle $B_i$. All the $x^i_j$'s are call {\bf boundary marked points}.
\end{itemize}
\end{defn}

\begin{rem}
The moduli space which we will compactify is the one of isomorphism classes of twisted holomorphic maps from bordered Riemann surfaces of given type $(g, h)$ with $(n, \ora{m})$ marked points. But in proving the compactness theorem, in most cases, we won't distinguish boundary marked points on different boundary components. So we simplify the notations: $\Sigma$ will be a smooth Riemann surface with boundary $\partial \Sigma$. ${\bf z}=\{z_1, \ldots, z_k\}\subset \Sigma\setminus \partial \Sigma$ is the set of {\bf interior} marked points and ${\bf x}=\{x_1, \ldots, x_l\}\subset \partial \Sigma$ is the set of {\bf boundary} marked points. 
\end{rem}

Let's fix a conformal metric on $\Sigma$ with volume form $\nu$. Recall that $L\subset X$ is an $S^1$-invariant Lagrangian submanifold.

\begin{defn}
Let $P\to \Sigma\setminus {\bf x}\cup {\bf z}$ be a principal $S^1$-bundle and $Y=P\times_{S^1} X$. Let $\varphi$ be a section of $Y$. We say that $\varphi$ maps the boundary $\partial \Sigma$ into $L$ if $\varphi\left(\partial\Sigma\setminus {\bf x} \right)\subset P\times_{S^1} L$. Let $A$ be a connection on $P$. We say that the tuple $(P, A, \varphi)$ is holomorphic if $\ov\partial_A \varphi=0$ wherever $A$ and $\varphi$ is defined. If both conditions holds, we call such a tuple $(P, A, \varphi)$ a holomorphic tuple with boudnary mapped into $L$.

We say that two tuples $(P, A, \varphi)$ and $(P', A', \varphi')$ over $(\Sigma, {\bf x}\cup {\bf z})$ are isomorphic, if there is a bundle isomorphism $g: P\to P'$ such that $g^*A'=A$ and $g^*\varphi'=\varphi$.
\end{defn}

A {\bf meromorphic} connection $A$ on $P$ is a smooth connection with $F_A$ bounded with respect to the chosen metric. Because of the existence of nontrivial limit holonomy at interior marked points, one can't extend the connection over those points. However, up to gauge transformation, a holomorphic tuple $(P, A, \varphi)$ can be extended smoothly over boundary marked points, if they satisfy the vortex equation or flat connection equation. More precisely, we will prove the following theorem in this section.

\begin{thm}\label{removal} Let $(\Sigma, {\bf x}\cup {\bf z})$ be a marked bordered Riemann surface. Let $\nu$ be a smooth volume form on $\Sigma$ and $c\in i{\mb R}$. Let $(P, A, \varphi)$ a holomorphic tuple with boundary mapped into $L$. If in addition we have either $\iota_\nu F_A+\mu(\varphi)=c$ or $F_A=0$ on $\Sigma\setminus {\bf x}\cup {\bf z}$, where $c\in i{\mb R}$, then there exists a holomorphic tuple $(P', A', \varphi')$ over $\Sigma\setminus {\bf z}$ with boundary mapped in $L$ and a smooth isomorphism $g: P'|_{\Sigma\setminus {\bf x}\cup {\bf z}}\to P$ such that $g^*(A,\varphi)=(A', \varphi')|_{\Sigma\setminus {\bf x}\cup {\bf z}}$. Moreover, any two such extensions $(P', A', \varphi')$ and $(P'', A'', \varphi'')$ are isomorphic over $\Sigma\setminus {\bf z}$.
\end{thm}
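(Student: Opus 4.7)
The problem is entirely local, so I fix one boundary marked point $x_i\in {\bf x}$ and a conformal half-disk chart $U_i\cong \mb{D}^+:=\{z\in \mb{D}:\mathrm{Im}(z)\ge 0\}$ with $x_i$ corresponding to $0$. Near an interior marked point $z_j\in {\bf z}$ no work is needed, because the extended bundle $P'$ is only required over $\Sigma\setminus {\bf z}$, so one just takes $P'=P$ there. Since $U_i\setminus\{x_i\}$ deformation retracts onto an arc and is therefore simply connected, $P|_{U_i\setminus\{x_i\}}$ is trivial; fix a trivialization, writing $A=d+\alpha$ and $\varphi=\phi$, where $\phi$ sends the real interval $(-1,1)\setminus\{0\}$ into $L$. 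The plan is then: (i) put $A$ in a gauge in which $\alpha$ extends smoothly across $x_i$; (ii) invoke a boundary removable-singularity theorem for $\phi$; (iii) glue the local data and prove uniqueness.

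For step (i), no residue term $\lambda\,d\theta$ is present (in contrast to Lemma~\ref{lemma21}), precisely because the simple-connectedness of $U_i\setminus\{x_i\}$ kills the holonomy obstruction. Mimicking the Poincar\'e-lemma argument in the proof of Lemma~\ref{lemma21}(2), the boundedness of $F_A$ produces a 1-form $\beta$ on $\overline{U_i}$, continuous at $x_i$ and smooth on $U_i\setminus\{x_i\}$, with $d\beta=F_A$. Then $\alpha-\beta$ is closed on the simply connected domain $U_i\setminus\{x_i\}$, hence equals $dh$ for some smooth $h:U_i\setminus\{x_i\}\to i\mb R$. The gauge transformation $g=\exp h$ converts $\alpha$ into $\beta$, which is continuous at $x_i$; using either the vortex equation $\iota_\nu d\beta+\mu(\phi)=c$ (with $\phi$ bounded because $X$ is compact) or the flatness $d\beta=0$, elliptic regularity then upgrades $\beta$ to a smooth 1-form across $x_i$.

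Step (ii) is the main obstacle. After the gauge fix, $\phi$ satisfies a perturbed Cauchy--Riemann equation
\begin{equation*}
\ov\partial_I\phi=\tfrac12\bigl(i\beta\,\mc{X}(\phi)+i(\beta\circ I_C)(I\mc{X})(\phi)\bigr),
\end{equation*}
whose inhomogeneous term is smooth up to $x_i$, with boundary values in the compact embedded $S^1$-invariant Lagrangian $L$. I first verify finite energy near $x_i$: writing $d\phi=d_\alpha\phi+i\alpha\,\mc{X}(\phi)$ and using the boundedness of $\alpha=\beta$ near $x_i$ together with $|\mc{X}|_{L^\infty(X)}<\infty$, one gets $\|d\phi\|_{L^2(U_i)}\le C\bigl(\|d_A\varphi\|_{L^2(U_i)}+\mathrm{Area}(U_i)\bigr)$; the right-hand side is finite because the twisted energy density is smooth on $U_i\setminus\{x_i\}$ and $U_i$ may be chosen disjoint from ${\bf z}$. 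I then apply the standard boundary removable-singularity theorem for $J$-holomorphic half-discs with totally real boundary condition, proved via the Schwarz-reflection doubling trick that reduces to the closed Gromov/Sacks--Uhlenbeck removal, to obtain a continuous extension with $\phi(x_i)\in L$. A standard elliptic bootstrap for the nonlinear $\ov\partial$-problem with Lagrangian boundary upgrades continuity to $C^\infty$.

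For step (iii), the local gauge transformations $g_i=\exp h_i$ assemble with the identity on $\Sigma\setminus{\bf x}$ to produce the global tuple $(P',A',\varphi')$ together with the claimed isomorphism $g$. For uniqueness, two extensions $(P',A',\varphi')$ and $(P'',A'',\varphi'')$ differ by an $S^1$-valued gauge transformation $u$ over $\Sigma\setminus({\bf x}\cup{\bf z})$; on each $U_i\setminus\{x_i\}$, $u$ satisfies the first-order equation $du=u\cdot(\alpha'-\alpha'')$ whose coefficients extend smoothly across $x_i$, so $u$ itself extends smoothly, producing the global isomorphism over $\Sigma\setminus{\bf z}$.
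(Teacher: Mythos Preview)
Your outline matches the paper's four-step structure (Subsections~3.1--3.4: extend the connection, extend the section, regularity, uniqueness), but there is a circularity in steps~(i)--(ii) that the paper explicitly avoids. In the vortex case you claim that ``elliptic regularity then upgrades $\beta$ to a smooth 1-form across $x_i$,'' but the equation $\iota_\nu d\beta=c-\mu(\phi)$ involves $\mu(\phi)$, and $\phi$ has not yet been extended across $x_i$; at this stage $\mu(\phi)$ is merely bounded, so $d\beta\in L^\infty$ and nothing better. (The equation is also not elliptic for $\beta$ without a gauge-fixing condition.) Consequently the connection form $\beta$ is only \emph{continuous} when you reach step~(ii), and the ``standard'' removable-singularity theorem via Schwarz reflection, which is stated for smooth or Lipschitz $J$, does not apply directly to the resulting perturbed Cauchy--Riemann equation.

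The paper's fix is to keep $\beta$ continuous, pass to the graph $\tilde\phi(z)=(z,\phi(z))$ in ${\mb D}_+\times X$ equipped with the \emph{continuous} almost complex structure $I_\alpha$ induced by $\beta$, verify explicitly that $({\mb D}_+\cap{\mb R})\times L$ is uniformly totally real with a positive lower angle (the computation in Subsection~3.2), and then invoke the Ivashkovich--Shevchishin theorem, which is formulated precisely for continuous $J$; this yields only a $W^{1,p}$ extension of $\phi$. Smoothness of the full tuple is then obtained in a separate step (Subsection~3.3) by a local-slice argument and bootstrapping the \emph{coupled} system $\ov\partial_A\varphi=0$, $\iota_\nu F_A+\mu(\varphi)=c$, not by treating connection and section separately as you do. Your step~(ii) bootstrap ``for the nonlinear $\ov\partial$-problem'' alone cannot close, because the regularity of $\beta$ only improves once $\phi$ does, and vice versa. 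Finally, your justification of finite energy (``the twisted energy density is smooth on $U_i\setminus\{x_i\}$'') is insufficient: smoothness on a punctured domain does not imply integrability; finite energy must be taken as a hypothesis.
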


The rest of the section is devoted to proving this theorem. In Subsection \ref{subsection31}, we show that we can extend the bundle and the connection; in Subsection \ref{subsection32}, we apply a removal of singularity theorem to extend the section; in Subsection \ref{subsection33} we prove a regularity theorem to show that the extension is smooth and in Subsection \ref{subsection34} the uniqueness of the extension. 

After the theorem is proved, we will assume every twisted holomorphic map on a bordered Riemann surface is defined over all boundary marked points.

\subsection{Extension of the bundle and the connection}\label{subsection31}

We first consider the model case. Let ${\mb D}\subset {\mb C}$ be the closed unit disc and ${\mb D}^*$ be the punctured unit disc. Let ${\mb D}_+= {\mb D}\cap\ov{\mb H}$ be the closed half disc and ${\mb D}_+^*:={\mb D}^*\cap \ov{\mb H}$ be the punctured half disc. All of these spaces are endowed with the standard conformal structure. Then ${\mb D}_+^*$ is conformal to $[0, \infty)\times [0,\pi]$.

Let $P\to {\mb D}_+^*$ be the trivial $S^1$-bundle and $A$ be a meromorphic connection on $P$. Then $P$ extends uniquely to the trivial bundle over $B$, which is, in this subsection, denoted by $\widetilde{P}$.

Under the cylindrical coordinate $(t, \theta)\in {\mb D}_+^*$, write $d_A=d+\alpha=d+ \alpha_t dt+\alpha_\theta d\theta$. Then there exists a (smooth) gauge transformation $g_1: {\mb D}_+^*\to S^1$ such that $d_{g_1^*A}=d+ \alpha_t' dt$. Then define a continous 1-form on $\widetilde{\alpha}$ on ${\mb D}^*\simeq [0, \infty)\times S^1$: $$
\widetilde{\alpha}=\widetilde{\alpha}_t(t, \theta)dt=\left\{ \begin{array}{crr}
        & \alpha_t'(t, \theta) dt, &\ {\rm if}\ \theta\in [0, \pi],\\
        & \alpha_t'(t, 2\pi-\theta) dt, &\ {\rm if}\ \theta\in [\pi, 2\pi].
                          \end{array} \right. $$

Then $\widetilde{\alpha}$ gives a continuous connection on the trivial bundle $\widetilde{P}={\mb D}^*\times S^1$. Note that the residue of this connection at the origin is zero, hence by the proof of Lemma \ref{lemma21} there is a continuous gauge transformation $g_2: {\mb D}^*\to S^1$, which is of winding number 0, and whose restriction to each half disk is smooth, such that $\widetilde{\alpha}- d\log g_2$ extends to a continuous 1-form on ${\mb D}$. Set $g=g_2\cdot g_1$, then the covariant derivative of $g^*A$ is written as $d_{g^*A}= d+ \alpha_0$ with $\alpha_0$ a continuous 1-form on ${\mb D}_+$ which is smooth on ${\mb D}_+^*$.

Now let $P\to \Sigma\setminus {\bf x}\cup {\bf z}$ with a meromorphic connection $A$. Then $P$ extends uniquely to a bundle $\widetilde{P}\to \Sigma \setminus {\bf z}$. We can do as above to find gauge transformations near each boundary marked point and glue them together to get a global gauge transformation $g: \Sigma\setminus {\bf x}\cup {\bf z}\to S^1$, then extend the connection $g^* A$ to a continuous one on $\widetilde{P}$.

\subsection{Extension of the section}\label{subsection32}

Now let $(P, A, \varphi)$ be a holomorphic tuple with $P$ and $A$ already extended over the boundary marked points as above, and $\varphi$ maps $\partial \Sigma$ into $L$.

Near each boundary marked point $x$, we can identify $(P, A, \varphi)$ with a twisted holomorphic pair $(\alpha, \phi)$ on a punctured half disk ${\mb D}_+^*$. The 1-form $\alpha$ induces a {\it continuous} almost complex structure $I_\alpha$ on ${\mb D}_+\times X$. Let $\widetilde{\phi}: {\mb D}_+^*\to {\mb D}_+\times X$ be given by $\widetilde{\phi}(x)=(x, \phi(x))$. Then $\widetilde{\phi}$ is $I_\alpha$-holomorphic and $\widetilde{\phi}({\mb D}^*\cap {\mb R})\subset ({\mb D}\cap {\mb R})\times L$. We want to apply the theorem of Ivashkovich-Shevchishin \cite{Ivashkovich_Shevchishin_2002} on the removal of boundary singularity for continuous almost complex structures with Lagrangian boundary condition.

To apply the theorem, we need the following definition:

\begin{defn} Let $(Y, h, J)$ be a Riemannian manifold equipped with a continuous almost complex structure. $W\subset Y$ is a totally real submanifold with respect to $J$. $A\subset W$ is a subset. We say that $W$ is {\bf $h$-uniformly totally real along $A$ with a lower angle $\alpha>0$} if for any $w\in A$ and $\xi\in T_w W$, $\xi\neq 0$, the angle between $J\xi$ and $T_w W$ is no less than $\alpha$.
\end{defn}

\begin{thm}\label{thm35} Let $(Y, h, J)$ be a Riemannian manifold equipped with a continuous almost complex structure $J$. $W\subset Y$ is a totally real submanifold with respect to $J$ with a lower angle $\alpha>0$. $A\subset W$ is a subset. Let $u: {\mb D}_+^*\to Y$ be a $J$-holomorphic map with $u({\mb D}^*_+\cap {\mb R})\subset A$. Suppose that $J$ is $h$-uniformly continuous on $u({\mb D}_+^*)$ and the closure of $u({\mb D}_+^*)$ is $h$-complete; $W$ is $h$-uniformly totally real along $A$; the energy of $u$ is finite. Then $u$ extends to the origin $0\in {\mb D}_+$ as a $W^{1, p}$ map for some $p>2$.
\end{thm}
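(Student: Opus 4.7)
The plan is to adapt the classical removal-of-boundary-singularity argument for pseudo-holomorphic half-disks to the low-regularity setting of continuous $J$, proceeding through (i) reduction to a standard local model, (ii) energy decay on small half-annuli, (iii) continuous extension to the puncture, and (iv) upgrading to $W^{1,p}$ regularity.

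First, I would localize around the puncture. Finite energy together with $h$-completeness of $\overline{u({\mb D}_+^*)}$ ensure that the cluster set of $u(z)$ as $z\to 0$ is a non-empty compact subset of $\overline{A}$. After restricting to a smaller half-disk, I pick coordinates on $Y$ near a cluster point in which $W$ becomes an open subset of ${\mb R}^n \subset {\mb C}^n$ and $J(0) = J_{\rm std}$. The uniform continuity of $J$ on $u({\mb D}_+^*)$ makes $\|J - J_{\rm std}\|_{C^0}$ small on the image, and the uniform totally-real lower angle $\alpha > 0$ persists in these coordinates. In this chart $u$ satisfies a perturbed Cauchy-Riemann equation $\overline\partial_{\rm std} u = Q(u)\, \partial u$ with small continuous $Q$ and boundary values on ${\mb R}^n$.

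Second, I would establish exponential energy decay on the half-annuli $\Omega_r = \{r \leq |z| \leq 2r\} \cap \overline{\mb H}$ by combining two ingredients: a mean-value inequality for the perturbed equation, valid for continuous $Q$ via Calder\'on-Zygmund theory \`a la Sikorav, together with an isoperimetric inequality for loops in ${\mb C}^n$ whose boundary arc lies in a uniformly totally-real submanifold. The uniform angle $\alpha$ is exactly what produces a uniform Lagrangian isoperimetric constant. These two inputs yield $E(\Omega_r) \leq C r^{2\delta}$ for some $\delta > 0$, which the mean-value inequality then converts into an oscillation bound $\mathrm{osc}_{\Omega_r}\, u \leq C r^\delta$. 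In particular, $u$ extends continuously to $0$ with $u(0) \in \overline{A}$.

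Third, with $u$ now continuous at the puncture, $W^{1,p}$ regularity follows from elliptic estimates: one may either apply interior-plus-boundary $L^p$ estimates for $\overline\partial_{\rm std}$ on the half-disk with boundary on ${\mb R}^n$ directly to the perturbed equation, or double $u$ across ${\mb R}^n$ by a generalized Schwarz reflection (available in the continuous-$J$ category precisely because of the uniform totally-real hypothesis) and invoke the interior removal-of-singularity theorem. The main obstacle throughout is the low regularity of $J$: the classical proofs of the mean-value inequality, the isoperimetric inequality, and the final $L^p$ estimate all rely on at least H\"older continuity of $J$, and pushing each of them through at the merely continuous level is the technical heart of the Ivashkovich-Shevchishin theorem. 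The uniform hypotheses on $J$ and $W$ are exactly what provide the uniform constants needed to run any blow-up or contradiction argument across the image of $u$.
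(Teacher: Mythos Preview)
The paper does not actually prove Theorem~\ref{thm35}: it is stated without proof and attributed to Ivashkovich--Shevchishin \cite{Ivashkovich_Shevchishin_2002}, then applied as a black box to extend the section over boundary marked points. So there is no ``paper's own proof'' to compare against.

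Your outline is a reasonable sketch of the Ivashkovich--Shevchishin argument itself: localize to a chart where $W={\mb R}^n$ and $J$ is close to $J_{\rm std}$, prove energy decay via a mean-value inequality plus an isoperimetric inequality exploiting the uniform totally-real angle, deduce H\"older continuity at the puncture, and finish with $L^p$ elliptic estimates. One small caution on your first step: you assert that the cluster set at $0$ lies in $\overline{A}$ and is compact \emph{before} establishing any oscillation control, but compactness of the cluster set is not automatic from $h$-completeness alone---you need finite energy plus the mean-value inequality to first show that the image of small half-circles has small diameter, which then bounds the oscillation and forces the cluster set to be a single point. So steps (i) and (ii) are intertwined rather than sequential. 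Apart from that ordering issue, the strategy you describe is the correct one, and you have identified the genuine difficulty, namely that each of the analytic ingredients (mean-value, isoperimetric, Calder\'on--Zygmund) must be made to work for merely continuous $J$, which is precisely the content of the cited paper.
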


Take $g$ be a Riemannian metric on $X$ such that $TL\bot I(TL)$ and $h$ to be the product metric on ${\mb D}_+\times X$. In order to apply the above theorem for $Y={\mb D}_+\times X$, $W=({\mb D}_+\cap {\mb R})\times L$, we only need to check that $L$ is $h$-uniformly totally real along $\phi({\mb D}_+^*\cap {\mb R})$, with respect to $I_\alpha$. Then for $(x, y)\in  ({\mb D}_+^*\cap {\mb R})\times L$ and $(v, Y)\in T_{(x, y)} ({\mb D}_+^*\times X)$, $$
I_\alpha (v, Y)=\left( jv, IY+i(\alpha(jv){\mc X}-\alpha(v) I{\mc X}) \right).$$

Suppose $|\alpha|<M$ on ${\mb D}_+$ since it is continuous. Let $(s, t)$ be the Euclidean coordinate on ${\mb D}$ with $s$ the real coordinate. $\alpha_s=\alpha(\partial_s)$, $\alpha_t=\alpha(\partial_t)$. Take two vectors $\xi_i=(a_i \partial_s, Y_i)$, $Y_i\in T_w L$, $i=1, 2$. If $a_1=0$, then $I \xi_1\bot \xi_2$. If $a_1\neq 0$, then
\begin{multline*}
\left| \cos \angle_h (I_\alpha \xi_1, \xi_2) \right|={\left|\langle I_\alpha \xi_1, \xi_2\rangle\right| \over \|I_\alpha \xi_1\|\cdot\|\xi_2\| }
={\left|\left\langle (a_1\partial_t, IY_1+ ia_1(\alpha_t {\mc X}-\alpha_s I{\mc X})) ,( a_2\partial_s, Y_2)\right\rangle \right|\over \left(\|a_1 \partial_t\|^2+\|IY_1+a_1 i(\alpha_t {\mc X}-\alpha_s I{\mc X})\|^2\right)^{1/2}\cdot\|\xi_2\|}\\
={|a_1 \alpha_t |\cdot |\langle {\mc X}, Y_2\rangle |\over (|a_1|^2+\|a_1\alpha_t {\mc X}\|^2+\|Y_1-ia_1\alpha_s {\mc X}\|^2)^{1/2}\cdot \|\xi_2\|}
\leq {|a_1 \alpha_t |\cdot \|{\mc X}\|\cdot \|Y_2\|\over (|a_1|^2+\|a_1\alpha_t {\mc X}\|^2)^{1/2}\cdot \|\xi_2\|}\\
\leq {|\alpha_t| \|{\mc X}\|\over (1+|\alpha_t|^2 \|{\mc X}\|^2)^{1/2} }\leq \left( {M^2 \|{\mc X}\|_{L^\infty}^2\over 1+ M^2\|{\mc X}\|_{L^\infty}^2}\right)^{1/2}<1.
\end{multline*}

Thus we have proved $L$ is $h$-uniformly totally real along $\phi({\mb D}^*_+)\cap L$. Then we can apply Theorem \ref{thm35} to this case and $\phi$ extends to a map of class $W^{1, p}$. Hence the tuple $(P, A, \Phi)$ is extended to a $W^{1, p}$ section on $\Sigma\setminus {\bf z}$.

\subsection{Regularity for twisted holomorphic pairs}\label{subsection33}

Now let $\Sigma$ be a bordered Riemann surface with a smooth volume form $\nu$, ${\bf z}\subset \Sigma$ is a subset of interior marked points. Let $P$ be a principal $S^1$-bundle over $\Sigma\setminus {\bf z}$ and $Y=P\times_{S^1} X$.
\begin{thm} Suppose $A$ is a connection on $P$ of class $W^{1, p}$ and $\varphi\in W^{1, p}$ is a $A$-holomorphic section of $Y$ and $\varphi$ maps $\partial \Sigma$ into $L$. If $(A, \varphi)$ satisfies either the vortex equation $\iota_\nu F_A+\mu(\varphi)=c$ or $F_A=0$, both in the $L^p$ sense, then there exists a gauge transformation $g$ of class $W^{2, p}$ such that $g^*(A, \varphi)$ is smooth.
\end{thm}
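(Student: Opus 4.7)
The plan is to exploit the abelian nature of $S^1$, which reduces Coulomb gauge fixing to a single scalar Poisson equation, and then to run an elliptic bootstrap on the resulting decoupled system of equations for the connection and the section.

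\textbf{Coulomb gauge.} After trivializing $P$, write $A = d + \alpha$ with $\alpha \in W^{1,p}(\Sigma, i\mb R)$. I would seek a gauge transformation of the form $g = e^f$ with $f : \Sigma \to i\mb R$ solving the Neumann problem
$$ \Delta f = d^* \alpha \ \text{in}\ \Sigma, \qquad \partial_\nu f = \alpha(\vec n)|_{\partial\Sigma}, $$
where $\vec n$ is the outward conormal. Since $d^*\alpha \in L^p$ and the boundary data lies in $W^{1-1/p,p}(\partial\Sigma)$ (after, if necessary, subtracting a constant from $\alpha$ to meet the standard compatibility condition), elliptic regularity produces $f \in W^{2,p}(\Sigma, i\mb R)$. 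The transformed connection form $\tilde\alpha := \alpha - df$ then satisfies $d^*\tilde\alpha = 0$ in $\Sigma$ with $\tilde\alpha(\vec n)|_{\partial\Sigma} = 0$. Because $L$ is $S^1$-invariant, the new section $\tilde\phi := e^f \phi$ still maps $\partial\Sigma$ into $L$ and lies in $W^{1,p}$.

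\textbf{Elliptic bootstrap.} In this gauge the pair $(\tilde\alpha, \tilde\phi)$ satisfies a coupled elliptic system. The Coulomb condition together with the vortex equation (or the flatness equation) gives
$$ d^*\tilde\alpha = 0, \qquad d\tilde\alpha = \bigl(c - \mu(\tilde\phi)\bigr)\nu \qquad (\text{or } d\tilde\alpha = 0), $$
so that $(d+d^*)\tilde\alpha$ is determined by $\tilde\phi$. Meanwhile, in local conformal coordinates $(s,t)$ the holomorphicity equation reads
$$ \partial_s \tilde\phi + I\, \partial_t \tilde\phi = i\bigl(\tilde\alpha_s \mc X(\tilde\phi) + \tilde\alpha_t I\mc X(\tilde\phi)\bigr), $$
a perturbed Cauchy--Riemann equation with totally real Lagrangian boundary condition $\tilde\phi(\partial\Sigma) \subset L$. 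Since $p>2$, both $\tilde\alpha$ and $\tilde\phi$ are H\"older continuous by Sobolev embedding, so the nonlinear right-hand sides are controlled. Boundary elliptic regularity for $d + d^*$ with the natural Neumann-type boundary condition then upgrades $\tilde\alpha$ to $W^{2,p}$; feeding this back into the $\bar\partial$-equation and invoking boundary elliptic regularity for the $\bar\partial$-operator with totally real boundary condition (standard in pseudoholomorphic curve theory) upgrades $\tilde\phi$ to $W^{2,p}$. Iterating these two steps alternately places $\tilde\alpha$ and $\tilde\phi$ in $W^{k,p}$ for every $k$, hence both are smooth.

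\textbf{Main obstacle.} The principal technical point is running the elliptic estimates uniformly up to the boundary, especially near the boundary marked points, where the $W^{1,p}$ regularity supplied by Theorem~\ref{thm35} is only guaranteed abstractly and the connection produced in Subsection~\ref{subsection31} is only continuous a priori. At such a point I would pass to the half-disc model, solve the Coulomb problem for $f$ with Neumann data on both the arc and the diameter components of the boundary, and apply $L^p$-estimates for the resulting mixed boundary value problem. A secondary point is globality: because $S^1$ is abelian, local Coulomb gauges patch by locally constant transition data, so no Uhlenbeck-type patching is required, and the gauge transformation $g$ can be constructed globally on $\Sigma$ in a single step.
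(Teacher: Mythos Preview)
Your proposal is correct and follows the same approach the paper indicates: put the connection in Coulomb gauge (the paper phrases this as ``local slice theorem'' and cites \cite{Wehrheim_Uhlenbeck} and \cite{Cieliebak_Gaio_Mundet_Salamon_2002}) and then run an elliptic bootstrap on the coupled system. The paper's own proof is a one-line reference to those sources, so your write-up is in fact a more detailed execution of exactly that strategy; the only cosmetic difference is that you solve a single global Neumann problem (exploiting that $S^1$ is abelian and $P$ is trivial over a bordered surface with interior punctures) whereas the cited references work via local slices, but for an abelian structure group these are equivalent.
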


\begin{proof}
This is a simple generalization of the theorem in \cite{Cieliebak_Gaio_Mundet_Salamon_2002} to the case of Riemann surface with boundary. The method is essentially the same, i.e. using the local slice theorem(see \cite{Wehrheim_Uhlenbeck}) and elliptic bootstrapping.
\end{proof}



Thus, let $(P, A, \varphi)$ be a holomorphic tuple over $(\Sigma, {\bf x}\cup {\bf z})$ which satisfies the boundary condition and either the vortex equation or $F_A=0$. {\it A priori } $A$ is a continuous connection on $P$ and $\varphi$ is of class $W^{1, p}$. Then the vortex equation or $F_A=0$ implies that $F_A$ is of class $W^{1, p}$ on $\Sigma\setminus {\bf z}$. Then by Poincar\'e lemma, we can find a $W^{2, p}$ gauge transformation $h$ such that $h^*A$ is actually of class $W^{1, p}$. Then we can apply the above theorem to get regularity, i.e. there exists a $W^{2, p}$ gauge tranformation such that $g^* (A, \varphi)$ is smooth. But both $(A, \varphi)$ and $g^* (A, \varphi)$ are smooth on $\Sigma \setminus {\bf x}\cup {\bf z}$, which implies that $g$ is smooth on $\Sigma\setminus {\bf x}\cup {\bf z}$. Hence, every tuple on $\Sigma \setminus {\bf z}\cup {\bf x}$ is gauge equivalent to a smooth tuple on $\Sigma \setminus {\bf z}$ by a smooth gauge transformation.

\subsection{Uniqueness}\label{subsection34}

Suppose we have two tuples $(P', A', \varphi')$, $(P'', A'', \varphi'')$ over $\Sigma\setminus {\bf z}$ which are both the extension of the original $(P, A, \varphi)$. Then there exists a smooth isomorphism $g: P'|_{\Sigma\setminus {\bf x}\cup {\bf z}}\to P''|_{\Sigma\setminus {\bf x}\cup {\bf z}}$ such that on $\Sigma\setminus {\bf x}\cup {\bf z}$, $g^*(A'', \varphi'')=(A', \varphi')$. So for each $x_j\in {\bf x}$, take a local trivialization of $P'$, $P''$ near $x_j$, with respect to which $d_{A'}=d+\alpha'$, $d_{A''}=d+\alpha''$, $g$ is  an $S^1$-valued function. One has, $$
\alpha'-d\log g=\alpha''.$$
But $d(\alpha'-\alpha'')=0$, in the weak sense (hence in the usual sense), since $F_{A'}=F_{A''}$. So there exists a smooth function $h: \Sigma\setminus {\bf z}\to i{\mb R}$ such that $\alpha'-\alpha''=dh$. So $\log g=h+C$ and $g$ extends smoothly over $x_j$, which is still denoted by $g$. So $g^*(P', A')=(P'', A'')$. And $g^*\varphi'=\varphi''$ by the uniqueness of the removal of singularity Theorem \ref{thm35}.

\section{Analogues of Theorem \ref{thm22}, \ref{thm24} and \ref{thm25} for half-cylinders}

One type of boundary degenerations of bordered curves is the shrinking of a boundary circle to a point. Conformally, this is equivalent to the process that the lengths of boundary cylinders tends to infinity. In the case of twisted holomorphic maps with boundary in $L$, such degenerations are almost the same as that treated in \cite{Mundet_Tian_2009}.

For any positive real number $N$, set $C_N^+=[0, N+2)\times S^1$, $Z_N^+=[0, N]\times S^1$. Let $P$ be a principal bundle over $C_N^+$, and let $A$ be a connection on $P$. We say that a given trivialization puts $A$ in {\bf temporal gauge} if with respect to this trivialization, the covariant derivative $d_A$ takes the form $d+ ad\theta$, where $a: C_N^+\to i{\mb R}$ is a function. We say that the trivialization is in {\bf balanced temporal gauge} if the restriction of $a$ to the boundary $\{0\}\times S^1$ is equal to some constant $\lambda\in i{\mb R}$.

We give $C_N^+$ and $Z_N^+$ the standard product metrics and conformal structures. We denote by $(t, \theta)$ the coordinate of points in $C_N^+$. Let $\nu=fdt\wedge d\theta$ be a volume form on $C_N^+$, and let $\eta$ be a positive real number. We say that $\nu$ is {\bf exponentially $\eta$-bounded} if $|f|+|\nabla f|<\eta e^{t-N}$.

We have the following analogue for Theorem \ref{thm22}, \ref{thm24} and \ref{thm25}.

\begin{thm}\label{thm40}
For any noncritical $\lambda\in i{\mb R}\setminus \Lambda_{cr}$, there exist positive real numbers $K_3, \sigma_3, \epsilon_3$ depending continuously on $\lambda$ with the following properties. Let $N>0$ be any real number and let $(\alpha, \phi): C_N^+\to X$ be a twisted holomorphic pair satisfying $\left\| \alpha-\lambda d\theta \right\|_{L^\infty (C_N^+)} <\epsilon_3$, $\left\| d\alpha \right\|_{L^\infty(C_N^+)}<\epsilon_3$, $\left\| d_\alpha \phi \right\|_{L^\infty(C_N)}<\epsilon_3$. Then for any $(t, \theta)\in Z_N^+$, we have
$$\left| d_\alpha \phi(t, \theta)\right| \leq K_3 e^{\sigma_3(t-N)} \left\| d_\alpha\phi \right\|_{L^\infty(Z_N^+)}.$$
In particular this implies that
$${\rm diam}_{S^1} \left( \phi(Z_N^+)\right) < K_3 \left\| d_\alpha \phi \right\|_{L^\infty (Z_N^+)}.$$
\end{thm}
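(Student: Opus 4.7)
The plan is to adapt the proof of Theorem~\ref{thm22} to the half-cylinder setting; the implicit Lagrangian boundary condition $\phi(\{0\}\times S^1)\subset L$ (from the running setup of bordered Riemann surfaces) plays the role of the ``second end'' in the closed-cylinder argument, producing exponential decay of $|d_\alpha\phi|$ from $t=N$ toward $t=0$ via a one-sided energy monotonicity. The two main steps will be (i) $L^2$-decay of the energy on truncations $[0,s]\times S^1$ via a differential inequality, and (ii) upgrade to the pointwise $L^\infty$-bound via the mean-value inequality.

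For (i), set $E(s):=\int_{[0,s]\times S^1}|d_\alpha\phi|^2\,dt\wedge d\theta$ and aim to establish $E(s)\le CE'(s)$ on $[0,N+1]$ with $C=C(\lambda)$; integration then yields $E(s)\le E(N+1)e^{(s-N-1)/C}$. The analytic input is the standard identity expressing $|d_\alpha\phi|^2\,dt\wedge d\theta$ as $\phi^*\omega$ plus error terms controlled by $F_\alpha$ and $\alpha-\lambda d\theta$; Stokes' theorem then reduces $E(s)$ to boundary integrals at $t=0$ and $t=s$. A local primitive $\beta$ of $\omega$ will be chosen to vanish along $L$, killing the $t=0$ contribution by the Lagrangian condition; the $t=s$ term $\int_{\{s\}\times S^1}\phi^*\beta$ is then bounded, via a short-loop isoperimetric inequality near an $S^1$-orbit, by $C\bigl(\int_{S^1}|d_\alpha\phi(s,\cdot)|\,d\theta\bigr)^2\le 2\pi C\,E'(s)$ (Cauchy--Schwarz). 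The noncriticality $\lambda\notin\Lambda_{cr}$ is exactly what guarantees a uniform isoperimetric constant near any orbit (the same mechanism used in Theorem~\ref{thm22}).

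For (ii), under the smallness hypotheses $|d_\alpha\phi|^2$ satisfies a subsolution-type elliptic inequality on the cylinder (with compatible boundary behavior at $t=0$ coming from the Lagrangian condition), so $|d_\alpha\phi(t,\theta)|^2$ is controlled by its integral over a unit-radius (half-)ball centered at $(t,\theta)$, which in turn is at most $E(t+1)-E(\max(t-1,0))\le E(t+1)$. Combined with the energy decay this gives $|d_\alpha\phi(t,\theta)|\le K_3 e^{\sigma_3(t-N)}\|d_\alpha\phi\|_{L^\infty(Z_N^+)}$; the ${\rm diam}_{S^1}$ estimate then follows by integrating the pointwise bound along a shortest curve in $Z_N^+$.

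The main obstacle is obtaining the isoperimetric constant uniformly in $N$ and in $(\alpha,\phi)$: the bound $\|d_\alpha\phi\|_{L^\infty}<\epsilon_3$ only forces ${\rm dist}_{S^1}(\phi(C_N^+),\mc O)=O(\epsilon_3 N)$ for a single orbit $\mc O$, which degenerates as $N\to\infty$, and without this localization the isoperimetric step breaks down. I plan to follow the same route as in the closed case: first prove the estimate on a fixed short sub-cylinder of length depending only on $\lambda$, where the image is automatically trapped modulo $S^1$ near a single orbit, and then propagate the estimate inductively along the cylinder, using the already-proved exponential decay at each step to re-center the reference orbit before re-applying the isoperimetric inequality on the next sub-cylinder.
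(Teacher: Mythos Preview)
The paper gives no proof here: it asserts that Theorems~\ref{thm40}--\ref{thm4.2} are proved in the same way as the corresponding closed-cylinder statements in \cite{Mundet_Tian_2009} and omits the argument entirely. You are right that the Lagrangian boundary condition $\phi(\{0\}\times S^1)\subset L$, implicit in the setup of Section~4 though absent from the statement, is the essential extra ingredient; without it the estimate already fails in the linear model (a single twisted Fourier mode with negative $t$-exponent has $|d_\alpha\phi|$ maximal at $t=0$).

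Your outline is plausible, but the decisive step is under-specified. The one-sided inequality $E(s)\le CE'(s)$ relies on Stokes over $[0,s]\times S^1$ with a primitive $\beta$ vanishing on $L$, and hence needs $\beta$ defined on the whole image $\phi([0,s]\times S^1)$. On a short sub-cylinder $[k,k+L_0]$ with $k>0$ there is no Lagrangian boundary, and Stokes yields only the symmetric three-annulus-type bound $\int_{[k,k+L_0]}|d_\alpha\phi|^2\le C\bigl(\ell(k)^2+\ell(k+L_0)^2\bigr)$, which by itself gives decay toward the middle of $[0,N]$, not toward $t=0$. Your inductive scheme therefore has to show that ${\rm diam}_{S^1}\,\phi([0,kL_0]\times S^1)$ stays in a fixed tubular neighbourhood of $L$ uniformly in $k$, so that a single equivariant primitive $\beta$ with $\beta|_L=0$ is available on the full image and the global one-sided Stokes identity can be applied directly at every stage; this bootstrap is exactly where the noncriticality of $\lambda$ and the Lagrangian condition interact, and it should be spelled out rather than folded into ``re-centering the reference orbit''. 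An alternative that may be closer to the paper's intended ``same proof'' is to double the half-cylinder across $\{0\}\times S^1$ using the anti-conformal reflection $(t,\theta)\mapsto(-t,\theta)$ combined with the target involution $\sigma$ of Section~5, and then run the closed-cylinder argument of Theorem~\ref{thm22} (in an $I_z$-robust form, parallel to what the paper does for Theorem~\ref{thm24} in Section~5) on the resulting full cylinder.
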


\begin{thm}\label{thm4.1}
Fix some number $c\in i{\mb R}$. There exists positive real numbers $\epsilon_4, \eta_0, \sigma_4, K_4$ with the following property. Let $N>0$ be a real number and let $(\alpha, \phi): C_N^+\to X$ be a twisted holomorphic pair. Suppose that $\alpha$ is in balanced temporal gauge and that there exists some critical value $\lambda\in \Lambda_{cr}$ such that
$$\left\| \alpha-\lambda d\theta \right\|_{L^\infty}<\epsilon_4,\ \left\|d_\alpha\phi \right\|_{L^\infty}<\epsilon_4.$$
Then there exist maps $\psi: [0, N]\to X^{\lambda}$ and $\phi_0: Z_N^+\to TX$ satisfying, for any $(t, \theta)\in Z_N^+$, $\phi_0(t, \theta)\in T_{e^{-\lambda\theta}\psi(t)}X$,
$$\phi(t, \theta)=e^{-\lambda\theta}\exp_{\psi(t)}\left(e^{\lambda\theta} \phi_0(t, \theta)\right),\ and\ \int e^{\lambda\theta} \phi_0(t,\theta)=0. $$
where $\exp$ is the exponential map of the metric on $X$. Furthermore,

\begin{enumerate}

\item If $\nu$ is an exponentially $\eta$-bounded volume form on $C_N^+$ for some $\eta<\eta_0$, and the vortex equation $\iota_\nu d\alpha+\mu(\phi)=c$ is satisfied, then for any $(t, \theta)\in Z_N^+$
$$\left| \phi_0(t, \theta) \right|<K_4 e^{\sigma_4(t-N)}.$$

\item If $d\alpha=0$, then for any $(t, \theta)\in Z_N^+$,
$$\left| \phi_0(t, \theta) \right|<K_4 e^{\sigma_4(t-N)}.$$
\end{enumerate}
\end{thm}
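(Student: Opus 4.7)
The plan is to reduce Theorem~\ref{thm4.1} to its closed-cylinder analogue Theorem~\ref{thm24} by a doubling argument across the boundary circle $\{0\}\times S^1$, using the Lagrangian boundary condition $\phi(\{0\}\times S^1)\subset L$ (implicit throughout Section~4) together with balanced temporal gauge at $t=0$. These two conditions together make the pair $(\alpha,\phi)$ reflection-symmetric in a way that extends to a twisted holomorphic pair on the full cylinder $C_N=(-N-2,N+2)\times S^1$.

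First I would set up the doubling. Let $r(t,\theta)=(-t,\theta)$ be the anti-holomorphic involution on $C_N$. Since $\alpha=a(t,\theta)\,d\theta$ in temporal gauge, define $\tilde\alpha(t,\theta)=a(|t|,\theta)\,d\theta$; this is continuous across $t=0$ with $\tilde a(0,\theta)=\lambda$, so $\tilde\alpha$ is in balanced temporal gauge on $C_N$ in the sense of Section~2, and $\|\tilde\alpha-\lambda d\theta\|_{L^\infty(C_N)}$ equals the same norm on $C_N^+$. In a Weinstein tubular neighborhood $U$ of $L\subset X$, the splitting $TX|_L=TL\oplus I(TL)$ defines a natural involution $\sigma:U\to U$ fixing $L$, which is $S^1$-equivariant (since $L$ and $I$ are) and anti-holomorphic along $L$. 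The smallness hypothesis $\|d_\alpha\phi\|_{L^\infty}<\epsilon_4$ confines $\phi(C_N^+)$ to $U$, so set
$$\tilde\phi(t,\theta)=\begin{cases}\phi(t,\theta),& t\ge 0,\\ \sigma(\phi(-t,\theta)),& t<0.\end{cases}$$
In case~(1), extend the volume form by $\tilde\nu=f(|t|,\theta)\,dt\wedge d\theta$, which preserves exponential $\eta$-boundedness on $C_N$; in case~(2), $d\tilde\alpha=0$ follows automatically.

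Next I would check that $(\tilde\alpha,\tilde\phi)$ is a (weak) twisted holomorphic pair on $C_N$ satisfying the hypotheses of Theorem~\ref{thm24} with the same critical residue $\lambda$. Anti-holomorphicity of $r$, anti-holomorphicity of $\sigma$ along $L$, and $S^1$-equivariance of $\sigma$ together give $\ov\partial_{\tilde\alpha}\tilde\phi=0$ across the seam in a weak sense; the doubled vortex equation extends in the same way. The pointwise bounds on $\|\tilde\alpha-\lambda d\theta\|_{L^\infty}$ and $\|d_{\tilde\alpha}\tilde\phi\|_{L^\infty}$ are inherited because $r$ is an isometry and $d\sigma$ is a pointwise isometry along $L$. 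Applying Theorem~\ref{thm24} produces $\tilde\psi:[-N,N]\to X^\lambda$ and $\tilde\phi_0:Z_N\to TX$ with $\tilde\phi=e^{-\lambda\theta}\exp_{\tilde\psi}(e^{\lambda\theta}\tilde\phi_0)$, $\int e^{\lambda\theta}\tilde\phi_0\,d\theta=0$, and the bound $|\tilde\phi_0(t,\theta)|<K_2 e^{\sigma_2(|t|-N)}$. Restricting to $t\in[0,N]$ yields $\psi=\tilde\psi|_{[0,N]}$ and $\phi_0=\tilde\phi_0|_{Z_N^+}$ with the claimed decomposition, and since $|t|=t$ on $Z_N^+$ the estimate becomes $|\phi_0(t,\theta)|<K_2 e^{\sigma_2(t-N)}$ with $K_4=K_2$ and $\sigma_4=\sigma_2$; the Lagrangian boundary condition additionally forces $\psi(0)\in L\cap X^\lambda$.

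The main obstacle will be the rigorous verification that the doubled pair is an honest twisted holomorphic pair across the seam, since the natural involution $\sigma$ is only anti-holomorphic along $L$ and not in general on $U$. I would handle this by a $C^0$-small, compactly supported perturbation $I\rightsquigarrow\tilde I$ on $U$ making $\sigma$ exactly anti-holomorphic, chosen so that the modified pair still satisfies the hypotheses of Theorem~\ref{thm24} after shrinking $\epsilon_2$; alternatively, one can appeal to the reflection-principle underlying the Ivashkovich--Shevchishin type result used in Subsection~\ref{subsection32} to upgrade $\tilde\phi$ to a $W^{1,p}$ (and, after gauge transformation as in Subsection~\ref{subsection33}, a smooth) section across $t=0$. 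Once these technical points are settled, no new analytic input beyond Theorem~\ref{thm24} is required.
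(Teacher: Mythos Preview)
The paper does not prove Theorem~\ref{thm4.1} by doubling; it simply writes ``These theorems can be proved the same way as proving [Theorem~4.1, Theorem~4.3, Theorem~4.4]'' of \cite{Mundet_Tian_2009} and omits the argument. So the intended route is a direct adaptation of the Mundet--Tian three-interval convexity estimate to the half-cylinder $C_N^+$, not a reduction to the full-cylinder theorem.

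Your reflection route has a genuine gap at exactly the point you flag, and the fixes you propose do not close it. The involution $\sigma$ on a tubular neighborhood of $L$ is anti-$I$-holomorphic only \emph{along} $L$, so the doubled map $\tilde\phi$ is not a twisted $I$-holomorphic pair on $C_N$; it is holomorphic only for a $z$-dependent family $I_z$ with $\|I_z-I\|\leq K_0\,{\rm dist}(\tilde\phi(z),L)$. Since Theorem~\ref{thm24} is stated for the fixed structure $I$, it does not apply to $(\tilde\alpha,\tilde\phi)$. Perturbing $I$ to make $\sigma$ exactly anti-holomorphic does not help: you would then need the cylinder theorem for the perturbed structure, and the constants $\epsilon_2,K_2,\sigma_2$ of Theorem~\ref{thm24} are tied to the original $(X,I,\omega)$. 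The Ivashkovich--Shevchishin result from Subsection~\ref{subsection32} is likewise beside the point: it extends a given $W^{1,p}$ map across a boundary puncture but says nothing about whether the doubled pair satisfies $\ov\partial_I\tilde\phi=0$ across the seam circle. What you would actually need is a version of Theorem~\ref{thm24} valid for such $z$-dependent families $I_z$, and that is precisely Theorem~\ref{thm: x1}, which the paper develops in Section~5 with substantial new work (the modified three-interval Lemma~\ref{lemma: x2} and its corollary). So your reduction, once made honest, costs at least as much as what it purports to avoid. A related wrinkle you pass over: the vortex equation does not transform into itself under $\sigma$ (one has $\mu\circ\sigma=2\mu|_L-\mu$ rather than $\mu$), so case~(1) of Theorem~\ref{thm24} is not literally satisfied by the doubled pair either; as the paper observes in the remark after Theorem~\ref{thm: x1}, only the curvature bound $|d\alpha|\leq\eta_0 e^{|t|-N}$ is really used, but this is another place where ``apply Theorem~\ref{thm24} directly'' fails as written.
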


\begin{thm}\label{thm4.2}
The same $K_4$, $\sigma_4$ satisfy the following property. Let $N>0$ be a real number and let $(\alpha, \phi): C_N^+\to X$ be a twisted holomorphic pair satisfying the hypothesis of the above theorem for the $\epsilon_4$. Assume furthermore that $\alpha$ is in balanced temporal gauge, so that $\alpha=ad\theta$ and $a|_{0\times S^1}$ is equal to some $\lambda_0\in i{\mb R}$ with $|\lambda_0-\lambda|<\epsilon_4$.

\begin{enumerate}

\item Suppose $\nu$ is an exponentially $\eta$-bounded volume form on $C_N^+$ and that the vortex equation $\iota_\nu d\alpha+\mu(\phi)=c$ is satisfied. Then for any $t\in [0, N]$,
$$ \left| \psi'(t)+i(\lambda_0-\lambda) \nabla h(\psi(t)) \right|<K_4 e^{\sigma_4(t-N)}.$$

\item Suppose that $d\alpha=0$. Then for any $t\in [0, N]$,
$$ \left| \psi'(t)+i(\lambda_0-\lambda)\nabla h(\psi(t)) \right|< K_4 e^{\sigma_4(t-N)}.$$
\end{enumerate}
\end{thm}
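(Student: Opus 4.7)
The plan is to mimic the strategy of Theorem \ref{thm25} on the half-cylinder. The key observation is that the pointwise bound $|\phi_0(t,\theta)| < K_4 e^{\sigma_4(t-N)}$ furnished by Theorem \ref{thm4.1} already has the exact shape demanded by the conclusion of Theorem \ref{thm4.2}, and the ODE governing $\psi$ converts the first bound into the second almost automatically. The one-sided exponential weight makes no real difference from the symmetric weight $e^{\sigma_2(|t|-N)}$ of the closed case, since the derivation of the ODE is pointwise in $t$.

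The first step will be to upgrade the pointwise estimate on $\phi_0$ from Theorem \ref{thm4.1} to a $C^1$ (in fact $C^k$) estimate carrying the same exponential weight. Away from the boundary $\{0\}\times S^1$ this is the routine interior elliptic bootstrap for the system $(\ov\partial_\alpha\phi=0,\ \iota_\nu F_A+\mu(\phi)=c)$; near $\{0\}\times S^1$ it is the analogous half-space estimate with totally real boundary data in $L$, for which the uniform lower-angle computation of Subsection \ref{subsection32} and the regularity result of Subsection \ref{subsection33} directly apply. After shrinking $\sigma_4$ and enlarging $K_4$ and then renaming back, $\phi_0$, $\partial_t\phi_0$, $\partial_\theta\phi_0$ all obey a bound of the same shape as in Theorem \ref{thm4.1}.

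The second step will be to substitute the decomposition $\phi(t,\theta) = e^{-\lambda\theta}\exp_{\psi(t)}(e^{\lambda\theta}\phi_0(t,\theta))$ into $\ov\partial_\alpha\phi=0$ in balanced temporal gauge $\alpha = a\, d\theta$ and project onto the zero Fourier mode on $S^1$. To zeroth order in $\phi_0$ the leading zero-mode projection is exactly
$$\psi'(t) + i(\lambda_0 - \lambda)\nabla h(\psi(t)) = 0,$$
and the higher-order corrections are polynomial expressions in $\phi_0$, $\partial\phi_0$ together with the deviation $a(t,\cdot)-\lambda_0$. This is precisely the computation performed in \cite{Mundet_Tian_2009} for Theorem \ref{thm25}; it transcribes verbatim to $[0,N]\times S^1$. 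In case (2), $d\alpha=0$ and the balanced temporal gauge force $a\equiv \lambda_0$, so the $a-\lambda_0$ terms drop out entirely; in case (1), the vortex equation gives $\partial_t a = f(c-\mu(\phi))$ with $|f|<\eta e^{t-N}$, so integration from $\{0\}\times S^1$ bounds $|a(t,\cdot)-\lambda_0|$ by a constant multiple of $\eta e^{t-N}$.

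Combining the $C^1$ bound on $\phi_0$ from Step 1 with the bound on $a-\lambda_0$ above, every error term on the right-hand side of the identity is dominated by a constant multiple of $e^{\sigma_4(t-N)}$, which, after one final renaming of constants to match those of Theorem \ref{thm4.1}, proves Theorem \ref{thm4.2} in both cases. The main obstacle I foresee is the bookkeeping in Step 2: the Fourier-mode constraint $\int e^{\lambda\theta}\phi_0(t,\theta)\, d\theta=0$ removes only a single mode, so the remaining oscillating components of $\phi_0$ must be controlled through the $C^1$ bootstrap rather than by cancellation upon integration. This is the same technicality already handled in \cite{Mundet_Tian_2009}, and the Lagrangian boundary bootstrap in Step 1 is the only genuinely new input required for the half-cylinder.
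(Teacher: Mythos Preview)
Your proposal is correct and follows the same route as the paper, which simply states that Theorems \ref{thm40}, \ref{thm4.1}, \ref{thm4.2} ``can be proved the same way as proving \cite[Theorem 4.1, Theorem 4.3, Theorem 4.4]{Mundet_Tian_2009}'' and omits the details; your Steps 2--3 are precisely the zero-mode projection argument of \cite[Lemma 12.1]{Mundet_Tian_2009} transported to the one-sided setting.

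One minor simplification is available in Step 1. You do not need an exponentially weighted $C^1$ estimate on $\phi_0$, nor any boundary elliptic bootstrap at $\{0\}\times S^1$. As in the paragraph preceding Theorem \ref{thm56} (and in \cite{Mundet_Tian_2009}), the implicit function theorem applied to the defining relation $\phi=e^{-\lambda\theta}\exp_{\psi}(e^{\lambda\theta}\phi_0)$ with $\int e^{\lambda\theta}\phi_0\,d\theta=0$ already yields the \emph{uniform} pointwise bound $|\phi_0|+|\nabla\phi_0|\le C\sup_{\nu}|\nabla\phi(t,\nu)|\le C\epsilon_4$, valid up to $t=0$ since $(\alpha,\phi)$ is smooth on the closed half-cylinder. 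The lemma underlying Step 2 then gives
\[
\bigl|\psi'(t)+i(\lambda_0-\lambda)\nabla h(\psi(t))\bigr|\le K'\Bigl(\sup_{\theta}|\phi_0(t,\theta)|+\sup_{\theta}|a(t,\theta)-\lambda_0|\Bigr),
\]
and the exponential decay enters only through the $C^0$ bound on $\phi_0$ from Theorem \ref{thm4.1} and the bound on $a-\lambda_0$ you derive in Step 3. So the Lagrangian boundary regularity of Subsections \ref{subsection32}--\ref{subsection33} is not the genuinely new input here; the half-cylinder case is literally one half of the closed-cylinder argument, which is why the paper omits it.
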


These theorems can be proved the same way as proving \cite[Theorem 4.1, Theorem 4.3, Theorem 4.4]{Mundet_Tian_2009}. We omit the proof.

\section{Reflection of twisted pairs from strips with Lagrangian boundary condition}

Another typical degeneration of bordered curves is the shrinking of an arc to a point, which is conformally equivalent to the forming of long strips. In this section we treat this type of degenerations in twisted holomorphic maps with boundary in $L$. We will see that in the limit, the two sides of the boundary node actually connect.

Indeed, if there is a global anti-holomorphic involution on $(X, I)$ with fixed point set equal to $L$, then we can use the reflectio principle to reduce the problem to the cylinder case. Even though in the general case there is no global involution, the image of the strip lies in a small neighborhood of the Lagrangian $L$ and there is still a local involution which allows us to use the reflection principle. Moreover, the holonomy along the central circle of the cylinder will be always trivial because the two half circles cancel with each other by the reflection. So by Theorem \ref{thm24} and \ref{thm25}, the gradient flow lines will be degenerate and the two limit orbits of the long cylinders will coincide in the limit. 

\subsection{A tubular neighborhood of the Lagrangian submanifold and local charts}

Let $N_L$ be the normal bundle of $L$, regarded as the subbundle of normal vectors of $TX$. Let $N_{L, \delta}\subset N_L$ be the subset consisting of all normal vectors with length less than $\delta$. It has a natural reflection map $\sigma: N_{L, \delta}\to N_{L, \delta}$ given by $\sigma(x, v)=(x, -v)$. By basic differential geometry we know that for $\delta$ small enough, the exponential map $\exp: N_{L, \delta}\to X$ given by $(x, v)\mapsto \exp_x v$ is a diffeomorphism onto a tubular neighborhood of $L$ in $X$, and ${\rm dist}_g(\exp_x v, L)=|v|$. In this section we fix a small $\delta_0$, and identify $N_{L, \delta_0}$ with a tubular neighborhood of $L$ by the exponential map. Since the $S^1$ action preserves the Riemannian metric, $N_L\to N$ is an equivariant bundle, and the infinitesimal action ${\mc X}$ is invariant under the reflection $\sigma$.

The Riemannian metric gives the Levi-Civita connection, which restricts to a connection on the bundle $\pi: N_L\to L$. Then this connection induces the ``horizontal-vertical'' splitting $TN_L\simeq \pi^* TL\oplus \pi^* N_L$. The almost complex structure $I$ gives a bundle isomorphism $I: TL\to N_L$. By the splitting this isomorphism gives an almost complex structure on $N_{L, \delta_0}$, which is denoted by $\widetilde{I_0}$. Then $\widetilde{I_0}$ may be different from the original $I$. But they coincide on $L$(or the 0-section). Moreover, there exists a $K_0>0$, depending only on $X$, $I$ and $g$, such that for any $(x, v)\in N_{L, \delta_0}$, 
\begin{align}\label{k}
\left\| I(x, v)-\widetilde{I_0}(x, v) \right\|_g \leq K_0 |v|=K_0 {\rm dist}_g(\exp_x v, L).
\end{align}

For any $y\in L$ and a small $\delta$ with $0<\delta<\delta_0$, denoted by $B_L(y, \delta)$ the $\delta$-ball in $L$ centered at $y$. Take a normal coordinate on $B_L(y, \delta)$ to be $(x^1, \ldots, x^n)$ and $\partial_i$ be the vector field $\partial/\partial x^i$. Define
$${\mb C}^n \simeq  {\mb R}^n\oplus i{\mb R}^n\ni \left((x^1, \ldots, x^n), (v^1, \ldots, v^n) \right)\mapsto \left(  (x^1, \ldots, x^n), \sum_i  v^i (I\partial_i)\right)\in N_L. $$
This map is a diffeomorphism on a small neighborhood of the origin. Then for $\delta$ small enouth, we define $f_{y,\delta}$ to be the inverse of the above map restricted on $\pi^{-1}(B_L(y, \delta))\cap N_{L, \delta}$. Let ${\mb C}^n$ be endowed with the standard complex structure $I_0$ and the standard metric $g_0$. One has that
$$f_{y, \delta}^{-1}\left({\mb R}^n \right)=B_L(y, \delta),\ \left. \left( f_{y, \delta}^{-1} \right)^* \widetilde{I}_0\right|_{{\mb R}^n}=I_0|_{{\mb R}^n},\ f_{y, \delta}\circ \sigma=\sigma_0\circ f_{y, \delta},$$ where $\sigma_0$ is the conjugation of ${\mb C}^n$. Since $f_{y, \delta}$ is isometric at $y$, one has that for any $\epsilon$ with $0<\epsilon<1/2$, there exists $\delta=\delta(\epsilon, y)>0$ such that
$$
\left\| g_0-\left( f_{y, \delta}^{-1} \right)^* g \right\|_{L^\infty}<\epsilon,\ {\rm and}\ 1/2\leq \left\|df \right\|\leq 2, \label{x:2}
$$
with respect to the metrics $g$ on $N_{L, \delta}$. By taking $\delta(\epsilon, y)$ small one can also assume that the image of $f$ is contained in the unit ball of ${\mb C}^n$.

\begin{rem}
Since $\left.\left(f_{y, \delta}^{-1} \right)^*\left( \widetilde{I}_0 \right)\right|_{{\mb R}^n}=I_0|_{{\mb R}^n}$, by smoothness, for any $y\in L$ and $\delta<\delta_0$, there exists $K_{y, \delta}>0$ such that on the image of $f_{y, \delta}$
$$
\left\| (f_{y, \delta}^{-1})^* \widetilde{I}_0-I_0 \right\|\leq K_{y, \delta} |{\bf v}|, \ \left\|D \left( (f_{y, \delta}^{-1})^* g \right) \right\|\leq K_{y, \delta}\label{x: K}
$$
with respect to the standard metric $g_0$ on ${\mb C}^n$. Here ${\bf v}$ denotes the vector $(v^1, \ldots, v^n)$.
\end{rem}

\subsection{Reflection}

Let $S_N^+=(-N-2, N+2)\times [0, \pi]$ and $S_N^-=(-N-2, N+2)\times [\pi, 2\pi]$ be the strips. Let $C_N=S_N^+\cup S_N^-$ be the cylinder glued from the two parts. Denote by $\tau$ the natural reflection of $C_N$, given by $\tau(t, \theta)=(t, 2\pi-\theta)$. We say that a map $f: C_N\to N_{L, \delta}$(resp. a differential form $\alpha$ on $C_N$) is {\bf reflection-symmetric} if $f\circ \tau=\sigma\circ f$(resp. $\tau^*\alpha=\alpha$).

In this section, we will do the following operation on each twisted holomorphic pair $(\alpha, \phi)$ defined on $S_N^+$ with $\left\| d_\alpha\phi \right\|_{L^\infty}$ small enough. First, take a gauge transformation $g_1$ on $S_N^+$ such that $g_1^*\alpha$ only has $dt$ component. Second, ``reflect'' the pair to get a twisted pair $(\tilde\alpha, \tilde\phi)$ defined on the cylinder $C_N$. Third, use another gauge transformation $g_2$ on $C_N$ such that $g_2^*\alpha$ is in balanced temporal gauge and such that $g_2^*\phi$ is reflection-symmetric. The resulting pair will NOT be holomorphic with respect to the original almost complex structure, but holomorphic with respect to a family of almost complex structures on $N_{L, \delta_0}$ which we will construct.

\begin{enumerate}
\item {\bf The first gauge transformation.} Let $(\alpha, \phi): S_N^+\to X$ be a twisted holomorphic pair with $\phi(\partial S_N^+)\subset L$ and $\|d_\alpha\phi\|_{L^\infty}$ small enough. Then $\phi(S_N^+)\subset N_{L, \delta_0}$.

Suppose $\alpha=\alpha_t dt+\alpha_\theta d\theta$. Then there exists a gauge transformation $g_1: S_N^+\to S^1$ such that $g_1^*\alpha=a dt$. Set $(\alpha', \phi')=g_1^*(\alpha, \phi)$.

\item {\bf Reflect the pair $(\alpha', \phi')$.} We define a pair $(\tilde{\alpha}, \tilde{\phi})$ on $C_N$ as follows:
\begin{align*}
\widetilde{\alpha}=\left\{ \begin{array}{cc}
         a(t, \theta) dt,\ &\ {\rm if}\ \theta\in [0, \pi]; \\
         a(t, 2\pi-\theta) dt,\ &\ {\rm if}\ \theta\in [\pi, 2\pi],
                                   \end{array} \right. \ \ \ \widetilde{\phi}(t, \theta)=\left\{ \begin{array}{cc}
\phi(t, \theta), \ &\ {\rm if} \ \theta\in [0, \pi];\\
\sigma(\phi(t, 2\pi-\theta)),\ &\ {\rm if}\ \theta\in [\pi, 2\pi].
\end{array}\right.
\end{align*}
Write $\widetilde{\alpha}=\widetilde{\alpha}_t dt$, then $\widetilde{\alpha}_t$ is continuous.

\item {\bf Construct a family of almost complex structures on $N_{L, \delta_0}$.} Choose $\delta_1$ to be smaller than the injective radius of $g$. Take a cut-off function $\rho: [0, \infty)\to {\mb R}$ such that $\rho\geq 0$, $\rho(t)=1$ for $t\in [0, \delta_1/2]$ and $\rho(t)=0$ for $t\geq \delta_1$. Now we define a {\it continuous} family of almost complex structures on $N_{L, \delta_0}$, parametrized by points in $C_N$. For each $z\in S_N^+$, set $y=\pi(\phi'(z))\in L$.  $I(\phi'(z))$ is an almost complex structure on the vector space $T_{\phi'(z)} N_L\simeq T_y L\oplus N_{L, y}$, where the splitting is given by the connection on $N_L$. Trivialize $TL$, $N_L$ on $B_L(y, \delta_1)$ by parallel transport along radial directions with respect to the Levi-Civita connection, then $I(\phi'(z))$ defines a complex structure of the bundle $(TL\oplus N_L)|_{B_L(y, \delta_1)}$. By the splitting $TN_L\simeq \pi^* TL\oplus \pi^*N_L$, this defines an almost complex structure on $N_{L, \delta_0}\cap \pi^{-1}\left( B(y, \delta_1) \right)$, which is denoted by $I^*_z$. Recall that we have the other almost complex structure $\widetilde{I_0}$ on $N_{L, \delta_0}$. Take $\rho_z: L\to {\mb R}$ to be $\rho_z(y')=\rho(d(y, y'))$. Define 
$$I_z=\left( 1-\rho_z I^*_z \widetilde{I_0} \right) \widetilde{I_0} \left( 1-\rho_z I^*_z \widetilde{I_0} \right)^{-1}$$
(it easy to check that $\left( 1-\rho_z I^*_z \widetilde{I_0} \right) $ is invertible). $I_z$	coincides with $I^*_z$ in $\pi^{-1}(B(y, \delta_1/2))\cap N_{L, \delta_0}$ and coincides with $\widetilde{I_0}$ on $N_{L, \delta_0}-\pi^{-1}(B(y, \delta_1))\cap N_{L, \delta_0}$. Moreover, if $z\in \partial S_N^+$ then $\phi'(z)\in L$ and it is easy to see that $I_z=\widetilde{I_0}$. Then for $z\in S_N^-$, set $I_z=-d\sigma\circ I_{\tau(z)} \circ d\sigma$. And we see that $I_z$ is a continuous family of almost complex structures on $N_{L,\delta_0}$.

It is easy to check that the pair $\left( \widetilde{\alpha}, \widetilde{\phi} \right)$ is $I_z$-holomorphic, i.e. for any $z\in C_N$, we have
$$
\partial_t \widetilde{\phi}(z)- i\widetilde{\alpha}_t(z) {\mc X}\left( \widetilde{\phi}(z) \right)=I_z \partial_\theta \widetilde{\phi}(z).$$

Also, by (\ref{k}), one easily obtain that for any $z\in C_N$,
\begin{align}\label{kk}
\left\| I_z-\widetilde{I_0} \right\|_{L^\infty(g)}\leq K_0 {\rm dist}_g \left( \widetilde\phi(z), L \right).
\end{align}

\item {\bf Construct the second gauge transformation.} Set $b(t, \theta)=\int_0^t \widetilde{\alpha}_t(s, \theta) ds$, $g_2(t, \theta)=\exp (b(t, \theta))$. Since $\tilde{\alpha}$ is reflection-symmetric, so is $g_2$, i.e. $g_2(t, \theta)= g_2(t, 2\pi -\theta)$. So
$$ g_2^*\tilde\alpha=-\left(\int_0^t {\partial \widetilde{\alpha}_t\over \partial\theta} dt\right) d\theta $$
is in balanced temporal gauge, $\left. g_2^*\widetilde{\alpha}\right|_{\{0\}\times S^1}=0$ and $g_2^*\widetilde\phi$ is reflection-symmetric. Now set $\widetilde{I_z}= g_2(z)_* \circ I_z\circ (g_2(z)^{-1})_*$. Then $\widetilde{I_z}$ and $g^*\widetilde\phi$ satisfy (\ref{kk}) and the pair $g_2^*\left( \widetilde\alpha, \widetilde\phi \right)$ is $\widetilde{I_z}$-holomorphic.
\end{enumerate}

With notations revised, we summarize that, from the original $I$-holomorphic pair $(\alpha, \phi): S_N^+\to X$, we get a pair $\left( \widetilde{\alpha}, \widetilde{\phi} \right) : C_N\to N_{L, \delta_0}$ and a continuous family of almost complex structures $I_z$ on $N_{L, \delta_0}$ with the following properties:

\begin{itemize}

\item $\widetilde{\alpha}$ is in balanced temporal gauge, $\widetilde{\alpha}|_{\{0\}\times S^1}=0$;

\item $\widetilde{\phi}$ is reflection-symmetric;

\item $\left( \widetilde{\alpha}, \widetilde{\phi} \right) $ is $I_z$-holomorphic;

\item $I_z$ satisfies (\ref{kk}).
\end{itemize}

We want to point out that the pair $\left( \widetilde\alpha, \widetilde\phi \right)$ we constructed is not necessarily smooth, but smooth on both $S_N^+$ and $S_N^-$.

Moreover, if the original pair satisfies the vortex equation, then the ``upper half'' of the new pair $\left( \widetilde{\alpha}, \widetilde{\phi} \right)$ also satisfies the vortex equation since we only did gauge transformations on the upper half. Define a function $\widetilde\mu: C_N\to i{\mb R}$ as follows: $\widetilde\mu(z)=\mu(\phi(z))$ if $z\in S_N^+$ and $\widetilde\mu(z)=2c-\mu(\phi(\tau(z)))$ if $z\in S_N^-$. We can also construct a continuous volume form $\widetilde{\nu}$ on $C_N$ by reflection. The new pair satisfies the equation
$$
\iota_{\widetilde\nu} d\widetilde{\alpha}+\widetilde\mu=c.
$$
Similarly, if the original pair is flat on $S_N^+$(i.e. $d\alpha=0$), then $d\widetilde{\alpha}=0$ on $C_N$. In either case, we have the estimate
$$
\left\|\iota_{\widetilde{\nu}} d\widetilde{\alpha} \right\|_{L^\infty}\leq C.
$$

The family $I_z$ depends on the original pair $(\alpha, \phi)$. We will prove theorems which state that for a given family $I_z$, certain conditions hold for all $I_z$-holomorphic pairs in balanced temporal gauge, including in particular, the pair $\left( \tilde{\alpha}, \tilde{\phi}\right)$.

\subsection{Analogue of Theorem \ref{thm24}}

Let $K_0$ be the one in (\ref{k}).
\begin{thm}\label{thm: x1}
Fix some $c\in i{\mb R}$. There exist positive real numbers $\epsilon$, $\eta_0$, $\sigma$, $K$ with the following property. Let $N>0$ be a real number and let $I_z$ be a continuous family of almost complex structures on $N_{L, \delta_0}$, parametrized by $z\in C_N$. Let $(\alpha, \phi): C_N\to N_{L, \delta_0}$ be a pair such that 
\begin{itemize}

\item $\phi$ is reflection-symmetric;

\item $\alpha$ is in balanced temporal gauge and $\alpha|_{\{0\}\times S^1}=0$;
\item $\|\alpha\|_{L^\infty}<\epsilon, \ \|d_\alpha\phi\|_{L^\infty}<\epsilon$.
\end{itemize}
Then there exists maps $\psi: [-N, N]\to N_{L, \delta_0}$ and $\phi_0: Z_N\to TX$ satisfying that for any $(t, \theta)\in Z_N$, $\phi_0(t, \theta)\in T_{\psi(t)} X$,
$$
\phi(t, \theta)=\exp_{\psi(t)} \left( \phi_0(t, \theta) \right), \int_0^{2\pi} \phi_0(t, \theta) d\theta=0.
$$

Moreover, if $(\alpha, \phi)$ is $I_z$-holomorphic and $\left\| I_z-\widetilde{I_0} \right\| \leq K_0 {\rm dist}(\phi(z), L)$, and that
\begin{align}\label{curv}
|d\alpha|\leq \eta_0 e^{|t|-N},
\end{align}
then for any $(t, \theta)\in Z_N$,
$$|\phi_0(t, \theta)|\leq K e^{\sigma(|t|-N)}.$$
\end{thm}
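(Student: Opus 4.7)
The strategy is to adapt the proof of Theorem \ref{thm24} in the critical-residue case $\lambda = 0$, which is exactly the regime forced by the reflection construction (since $\alpha$ is in balanced temporal gauge with $\alpha|_{\{0\}\times S^1}=0$). The new feature is that the pair is $I_z$-holomorphic rather than $\widetilde{I_0}$-holomorphic, and the distance $\|I_z-\widetilde{I_0}\|$ must be absorbed into the main estimate.

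\textbf{Construction of $\psi$ and $\phi_0$.} Since $\|d_\alpha\phi\|_{L^\infty}<\epsilon$, the loop $\phi(\{t\}\times S^1)$ lies in a small geodesic ball in $N_{L,\delta_0}$. I define $\psi(t)$ as its Riemannian center of mass (with respect to the induced metric), characterized implicitly by $\int_0^{2\pi}\exp^{-1}_{\psi(t)}(\phi(t,\theta))\,d\theta=0$, and set $\phi_0(t,\theta):=\exp^{-1}_{\psi(t)}(\phi(t,\theta))$; the inverse function theorem applied slice by slice gives existence and smoothness. The crucial point is that the reflection $\sigma$ is an isometry of $N_{L,\delta_0}$ whose fixed set is $L$, hence $L$ is totally geodesic. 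The reflection symmetry $\phi(t,2\pi-\theta)=\sigma\phi(t,\theta)$ then forces the center of mass to be $\sigma$-fixed, so $\psi(t)\in L$. As a consequence
$$\operatorname{dist}_g(\phi(t,\theta),L)\leq C|\phi_0(t,\theta)|,$$
and the hypothesis $\|I_z-\widetilde{I_0}\|\leq K_0\operatorname{dist}(\phi(z),L)$ upgrades to $\|I_z-\widetilde{I_0}\|\leq CK_0|\phi_0(z)|$.

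\textbf{Exponential decay.} Writing $\alpha=a\,dt$ and expanding the $I_z$-holomorphicity equation $\partial_t\phi-ia\mathcal{X}(\phi)=I_z\partial_\theta\phi$ about $\psi(t)$ via the exponential map yields, after linearization, an equation of the schematic form
$$\partial_t\phi_0+\widetilde{I_0}(\psi)\partial_\theta\phi_0 = L(\psi,\psi',a)+Q(\phi_0,\nabla\phi_0)+R(I_z-\widetilde{I_0}),$$
where $L$ involves $\psi'$ and the connection coefficient, $Q$ is $O(|\phi_0|\cdot|\nabla\phi_0|)$, and $R=O(\|I_z-\widetilde{I_0}\|\cdot|\partial_\theta\phi_0|)=O(|\phi_0|\cdot|\partial_\theta\phi_0|)$ by the previous paragraph. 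Setting $e(t)=\int_0^{2\pi}|\phi_0(t,\theta)|^2\,d\theta$ and integrating by parts over the circle, the zero-mean condition on $\phi_0(t,\cdot)$ combined with the spectral gap of $-\partial_\theta^2$ on mean-zero functions (lowest eigenvalue $1$) produces, exactly as in Section 4 of \cite{Mundet_Tian_2009}, a differential inequality
$$e''(t)\geq 4\sigma^2 e(t) - C\eta_0^2 e^{2(|t|-N)} - C\epsilon\, e(t),$$
where the curvature forcing enters through $|d\alpha|\leq\eta_0 e^{|t|-N}$ and the $Q,R$ contributions are absorbed into the last term once $\epsilon$ is small. Combined with the boundary bounds $e(\pm N)\leq C\epsilon^2$, a comparison argument against $e^{2\sigma(|t|-N)}$ yields the $L^2$ decay, and elliptic regularity on each $\theta$-circle upgrades it to the desired pointwise bound.

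\textbf{Main obstacle.} The hard part is step two: deriving the $\phi_0$-equation precisely enough to see that the spectral gap dominates once all three error terms ($\psi'$-coupling, nonlinear $|\phi_0|^2$ contributions, and the $I_z$-perturbation) are tracked. What makes this theorem work, rather than being a strictly harder analogue of Theorem \ref{thm24}, is that the reflection symmetry pins $\psi(t)$ to the Lagrangian, so the a priori dangerous factor $\|I_z-\widetilde{I_0}\|$ is quadratically small in $\phi_0$ and can be absorbed into the coercive $e(t)$-term rather than appearing as an independent forcing.
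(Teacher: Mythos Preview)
Your approach is genuinely different from the paper's and contains a gap.

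\textbf{The gap.} Your claim that the reflection $\sigma$ is an isometry of $N_{L,\delta_0}$, and hence that $L$ is totally geodesic and $\psi(t)\in L$, is unjustified. The metric on $N_{L,\delta_0}$ is the one pulled back from $(X,g)$ via the exponential map; the normal-bundle reflection $(x,v)\mapsto(x,-v)$ has no reason to preserve it, and a Lagrangian is not in general totally geodesic for $g=\omega(\cdot,I\cdot)$. What \emph{is} true is that $\phi(t,0),\phi(t,\pi)\in L$ (these are $\tau$-fixed, hence $\sigma$-fixed), so $\operatorname{dist}(\psi(t),L)\leq d(\psi(t),\phi(t,0))\leq C\sup_\theta|\phi_0(t,\theta)|$, yielding $\operatorname{dist}(\phi(z),L)\leq C\sup_\theta|\phi_0(t,\theta)|$ rather than the pointwise bound you assert. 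Whether this $\sup$-version can still be absorbed in your $e''$-inequality is not obvious from what you wrote; the error term becomes $O(\sup_\theta|\phi_0|\cdot|\partial_\theta\phi_0|)$ and you would need a Sobolev step to close the loop.

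\textbf{Comparison with the paper.} The paper does not derive a second-order differential inequality for $e(t)$. It works in local Euclidean charts $f_{y,\delta}$, where reflection becomes complex conjugation, and proves by a compactness/contradiction argument (Lemma~\ref{lemma: x2}) the three-interval inequality
\[
\|\phi_0\|_{L^2(Z_{II})}^2\leq\tfrac{1}{3}\bigl(\|\phi_0\|_{L^2(Z_I)}^2+\|\phi_0\|_{L^2(Z_{III})}^2\bigr),
\]
which is then iterated along the cylinder to give exponential decay, following the scheme of \cite{Mundet_Tian_2009}. In that argument reflection symmetry is used algebraically: it forces the Euclidean $\theta$-average $\int\phi(0,\theta)\,d\theta$ to lie in ${\mb R}^n$, and gives $|\operatorname{Im}\phi|\leq|\phi_{av}|$; this is exactly what lets the paper dominate $\|I_z-I_0\|\leq K|\operatorname{Im}\phi|$ by $\phi_0$-quantities, the role your $\psi(t)\in L$ claim was meant to play. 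Your reference ``exactly as in Section~4 of \cite{Mundet_Tian_2009}'' is therefore misleading: that section (and this paper) use the three-interval/compactness route, not an ODE comparison.
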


\begin{rem} In Theorem 4.3 of \cite{Mundet_Tian_2009}, they distinguish two cases, $d\alpha=0$ or $(\alpha, \phi)$ satisfies the vortex equation. But in order to prove the estimate, we only need (\ref{curv}).
\end{rem}

\begin{proof} The existence of $\psi$ and $\phi_0$ follows by applying the implicit function theorem. In the following we prove the rest of the theorem.

First we prove a lemma in a local chart. Let $C=(-2, 3)\times S^1$ and $Z=[-1, 2]\times S^1$, $Z_I=[-1,0]\times S^1$, $Z_{II}= [0, 1]\times S^1$, $Z_{III}= [1, 2]\times S^1$. Let $X$ be a smooth vector field on ${\mb C}^n$ such that $\sigma_* X=X$, where $\sigma$ is the conjugation of ${\mb C}^n$.
\begin{lemma}\label{lemma: x2} For any $r>0$ and $K>0$ there exists an $\epsilon>0$ with the following property. Let $I_z$ be a continuous family of linear complex structures on ${\mb C}^n$ parametrized by points on $C$. Let $g$ be a Riemannian metric on ${\mb C}^n$ and ${\mc X}$ be a smooth vector field on $B(0, r)$, satisfying
\begin{itemize}

\item $\|I_z-I_0\|_{L^\infty}<\epsilon$, $\|g_0-g\|_{L^\infty}<\epsilon$, $\|Dg\|_{L^\infty}\leq K$, $\|{\mc X}\|_{L^\infty}\leq K$. 

\end{itemize}
Let $(\alpha, \phi): C\to {\mb C}^n$ be a pair, which satisfies
\begin{itemize}

\item $\alpha$ in balanced temporal gauge with $\alpha|_{\{0\}\times S^1}=0$;

\item $\partial_t\phi(z)=I_z \left(  \partial_\theta \phi(z)-i\alpha_\theta (z){\mc X}(\phi(z)) \right)$;

\item $\phi$ is reflection-symmetric, i.e., $\phi\circ \tau=\sigma\circ \phi$;

\item $\left\| I_z-I_0 \right\|_{L^\infty}\leq K  \left| {\rm Im} \phi(z) \right|$; 

\item $\left\| \alpha \right\|_{L^\infty}<\epsilon$, $\left\| d_{\alpha}\phi \right\|_{L^\infty}<\epsilon$, $\phi(C)\subset B(0, 2r)$. 
\end{itemize}
Let $\psi$, $\phi_0$ be defined as in Theorem \ref{thm: x1} and suppose that $\|d\alpha\|_{L^2}<\epsilon \|\phi_0\|_{L^2}$. Then in terms of $g$, we have
$$
\left\| \phi_0 \right\|_{L^2(Z_{II})}^2\leq {1\over 3}\left( \left\| \phi_0 \right\|_{L^2(Z_I)}^2+\left\| \phi_0 \right\|_{L^2(Z_{III})}^2\right).
$$
\end{lemma}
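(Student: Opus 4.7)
The strategy is to reduce the estimate to a convexity-type three-interval inequality for the scalar quantity $g(t):=\|\phi_0(t,\cdot)\|_{L^2(S^1)}^2$, exploiting the spectral gap furnished by the mean-zero constraint $\int_0^{2\pi}\phi_0(t,\theta)\,d\theta=0$. First I would obtain $\psi$ and $\phi_0$ by the standard implicit function theorem argument, as in \cite{Mundet_Tian_2009}. The key observation is that the reflection symmetry $\phi\circ\tau=\sigma\circ\phi$, combined with the uniqueness of the decomposition and the mean-zero condition, forces $\psi(t)\in \mathbb{R}^n$ (i.e., $\psi(t)\in L$ in the local chart) for every $t$; this reality of $\psi$ is what later lets me control $|\mathrm{Im}\,\phi|$ by $C|\phi_0|$.

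Next I would derive a second-order elliptic PDE for $\phi_0$. Writing the holomorphicity equation $\partial_t\phi=I_z(\partial_\theta\phi-i\alpha_\theta{\mathcal X}(\phi))$ in the chart and substituting $\phi(t,\theta)=\exp_{\psi(t)}(\phi_0(t,\theta))$, I differentiate once more in $t$ and use the equation again to trade $\partial_t$ for $\partial_\theta$ derivatives, obtaining a relation of the form
$$\partial_t^2\phi_0+\partial_\theta^2\phi_0=R(t,\theta).$$
Under the hypotheses the remainder $R$ decomposes into (i) terms from $I_z-I_0$, bounded pointwise by $K|\mathrm{Im}\,\phi|\cdot|\partial\phi|\le C|\phi_0|(|\phi_0|+|\partial\phi_0|)$ via $\psi\in L$; (ii) terms proportional to $\alpha_\theta$, $\psi'(t)$, and the Christoffel symbols of $g$, each bounded by $C\epsilon(|\phi_0|+|\partial\phi_0|)$ using $\|\alpha\|_{L^\infty}<\epsilon$, $\|d_\alpha\phi\|_{L^\infty}<\epsilon$, and $\|Dg\|_{L^\infty}\le K$; and (iii) a curvature term proportional to $\partial_t\alpha_\theta$, contributing $C|d\alpha|$. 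Combining these with $|\partial\phi_0|\le C\epsilon$, every nonlinear contribution to $R$ carries an extra factor of $\epsilon$, and the only contribution not a priori pointwise small is the curvature term, which is however controlled globally by the hypothesis $\|d\alpha\|_{L^2}<\epsilon\|\phi_0\|_{L^2}$.

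For the ODE step I would differentiate $g$ twice, integrate by parts in $\theta$, and apply the Poincar\'e inequality $\int_{S^1}|\partial_\theta\phi_0|^2\,d\theta\ge\int_{S^1}|\phi_0|^2\,d\theta$ (valid precisely because $\int\phi_0\,d\theta=0$) to get
$$\tfrac{1}{2} g''(t)=\int_{S^1}|\partial_t\phi_0|^2\,d\theta+\int_{S^1}|\partial_\theta\phi_0|^2\,d\theta+\int_{S^1}\langle\phi_0,R\rangle\,d\theta\ \ge\ (1-C\epsilon)\,g(t)-{\mathcal E}(t),$$
where the error satisfies $\int_{-1}^2{\mathcal E}(t)\,dt\le C\epsilon^2\int_{-1}^2 g(t)\,dt$. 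Hence for $\epsilon$ small, $g$ is an approximate subsolution of $h''=\mu^2 h$ with $\mu^2$ close to $2$. The lemma then follows from a direct three-interval comparison: the extremals $e^{\pm\mu t}$ of $h''=\mu^2 h$ yield, on three adjacent unit intervals, a ratio $\int_{Z_{II}}/(\int_{Z_I}+\int_{Z_{III}})$ strictly less than $1/3$ — a computation with $\mu=\sqrt{2}$ gives $\approx 0.23$ — leaving enough margin to absorb the $O(\epsilon)$ shift in $\mu^2$ and the integrated error ${\mathcal E}$ using the $L^2$ hypothesis on $d\alpha$.

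The main technical obstacle is the second step: organizing the nonlinear remainder $R$ so that the cross-product $|\phi_0|\cdot|\partial\phi_0|$ can be absorbed into the effective spectral-gap constant rather than destroying it. The decisive mechanism is the $L^\infty$-smallness of $d_\alpha\phi$, which converts $|\partial\phi_0|$ into an extra factor of $\epsilon$; without it the cross term would not be absorbable. The fact that the curvature hypothesis is $L^2$ rather than $L^\infty$ is precisely what is needed: it aligns with the integrated nature of the three-interval comparison, allowing the $|d\alpha|$ contribution to be handled globally on $[-1,2]$ rather than pointwise in $t$.
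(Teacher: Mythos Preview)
Your approach is genuinely different from the paper's and worth contrasting. The paper proceeds by contradiction and compactness: assuming the inequality fails along a sequence $\epsilon_u\to 0$, it sets $\xi_u=\phi_u-x_u$ with $x_u=\int_{S^1}\phi_u(0,\theta)\,d\theta\in\mb R^n$, shows $\|\ov\partial_{I_0}\xi_u\|_{L^2}/\|d\xi_u\|_{L^2}\to 0$ using only the pointwise bound $\|I_z-I_0\|\le K|{\rm Im}\,\phi(z)|$ at first order, rescales, and extracts a nonconstant holomorphic limit $\xi$ that violates the sharp three-interval inequality with constant $1/5$ (Lemma~11.4 of \cite{Mundet_Tian_2009}). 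Your route is a direct ODE convexity argument: derive $g''\ge(2-C\epsilon)g-{\mc E}$ for $g(t)=\|\phi_0(t,\cdot)\|^2_{L^2(S^1)}$ and compare with $\cosh$. The paper's method is softer and uses no second derivatives; yours would in principle give explicit constants and avoids the passage to a limit.

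There is, however, a genuine gap. Your scheme requires differentiating the holomorphicity equation once more (to obtain $\partial_t^2\phi_0+\partial_\theta^2\phi_0=R$ and then $g''$), which forces you to differentiate $I_z$ in $z$ and to take second derivatives of $\phi$. The lemma only assumes $I_z$ is a \emph{continuous} family; no $C^1$ dependence on $z$ is postulated, and with merely continuous coefficients elliptic regularity does not upgrade $\phi$ to $C^2$. The paper's compactness argument is designed precisely to avoid this: it never differentiates $I_z$ and controls $\ov\partial_{I_0}\xi_u$ using only $\|I_z-I_0\|_{L^\infty}$ together with $\|\partial_\theta\phi_u\|_{L^\infty}$. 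In the intended application $I_z$ is piecewise smooth (built from $\phi$ by reflection), so your strategy could be salvaged there, but not for the lemma as stated. A secondary point: the passage from $g''\ge\mu^2 g-{\mc E}$ to the integrated three-interval bound is not ``direct''; it needs a test-function or maximum-principle argument that you have not supplied (your computation of the ratio $\approx 0.23$ is for solutions of $h''=\mu^2 h$, and transferring this to subsolutions with an $L^1$ error term requires an explicit comparison). Finally, $\psi(t)\in\mb R^n$ is not forced, since $g$ carries no reflection symmetry; only the ordinary Euclidean average of $\phi(t,\cdot)$ is real, and the geodesic center $\psi(t)$ differs from it by $O(\|\phi_0\|_{L^\infty}^2)$ --- enough for your purposes, but the claim as written is false.
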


We will use the following two lemmas proved in \cite{Mundet_Tian_2009}.

\begin{lemma}[Lemma 11.4,\cite{Mundet_Tian_2009}]\label{lemma: 11.4} Let $\phi: C\to {\mb C}^n$ be a holomorphic map and let $\phi_{av}(t, \theta):=\phi(t, \theta)-{1\over 2\pi} \int_0^{2\pi} \phi(t, \nu) d\nu$. Then
$$
\left\| \phi_{av} \right\|_{L^2(Z_{II})}^2\leq {1\over 5} \left( \left\| \phi_{av} \right\|_{L^2(Z_I)}^2+ \left\| \phi_{av} \right\|_{L^2(Z_{III})}^2\right)
$$
where the norms are computed with respect to the standard metric on ${\mb C}^n$.
\end{lemma}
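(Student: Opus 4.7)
The plan is to diagonalize the problem via Fourier expansion on each circle $\{t\}\times S^1$. The cylinder $C = (-2, 3) \times S^1$ with its conformal structure $\partial_t = i\partial_\theta$ is biholomorphic to the annulus $\{e^{-2} < |w| < e^3\} \subset {\mb C}^*$ via $w = e^z$, where $z = t + i\theta$. Any holomorphic ${\mb C}^n$-valued map $\phi$ on $C$ therefore admits a Laurent expansion
$$\phi(t, \theta) = \sum_{k \in {\mb Z}} c_k\, e^{kt}\, e^{ik\theta}, \qquad c_k \in {\mb C}^n.$$
In particular $\tfrac{1}{2\pi}\int_0^{2\pi}\phi(t, \nu)\,d\nu = c_0$, so $\phi_{av}(t,\theta) = \sum_{k \neq 0} c_k e^{kt} e^{ik\theta}$.

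Parseval's identity on the circle then diagonalizes the $L^2$ norms: for any subinterval $J \subset [-1, 2]$,
$$\|\phi_{av}\|_{L^2(J \times S^1)}^2 = 2\pi \sum_{k \neq 0} |c_k|^2 \int_J e^{2kt}\, dt,$$
where $|c_k|$ is the standard Hermitian norm on ${\mb C}^n$. Consequently, the asserted inequality reduces to the mode-by-mode bound
$$\int_0^1 e^{2kt}\, dt \;\leq\; \frac{1}{5}\left(\int_{-1}^0 e^{2kt}\, dt + \int_1^2 e^{2kt}\, dt\right) \qquad \text{for every } k \in {\mb Z}\setminus\{0\}.$$

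A direct evaluation of the three integrals reduces this to a single-variable algebraic inequality. By the symmetry $k \leftrightarrow -k$ (coupled with the reflection of $[-1,2]$ about $t = 1/2$, which swaps $Z_I$ and $Z_{III}$ while preserving $Z_{II}$ up to a uniform factor), both signs of $k$ yield the same condition. Setting $v := e^{2|k|}$, the inequality becomes equivalent to $v + v^{-1} \geq 5$, i.e.\ $v^2 - 5v + 1 \geq 0$, which holds precisely when $v \geq (5 + \sqrt{21})/2 \approx 4.79$. Since $|k| \geq 1$ forces $v \geq e^2 \approx 7.39$, the bound holds comfortably for every nonzero integer mode.

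The proof is therefore essentially an elementary calculation: the two substantive points are that holomorphicity forces the pure exponential $t$-dependence of each Fourier mode, which makes Parseval produce a clean mode-by-mode decomposition of the $L^2$ norms, and that the constant $1/5$ is sharp enough to accommodate the slowest-decaying modes $|k| = 1$. The apparent ``hard'' step---verifying the numerical inequality for the critical mode $|k|=1$---is not truly hard: the intervals $Z_I, Z_{II}, Z_{III}$ are chosen sufficiently far from a symmetric configuration that the exponential growth (or decay) from $Z_{II}$ into one of the outer slabs dominates by a margin large enough for the $1/5$ constant.
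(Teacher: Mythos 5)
Your proof is correct: since the paper simply quotes this lemma from Mundet--Tian without reproving it, your Laurent--Fourier argument is exactly the expected one, and the key reduction checks out --- writing $\phi=\sum_k c_k e^{kt}e^{ik\theta}$, Parseval diagonalizes the $L^2$ norms, and the mode-by-mode inequality is equivalent (after factoring $e^{\pm 2k t}$ integrals) to $e^{2|k|}+e^{-2|k|}\geq 5$, which holds for all $|k|\geq 1$ because $e^{2}+e^{-2}\approx 7.52$. No gaps; the only loosely worded point is the $k\leftrightarrow -k$ symmetry via reflection about $t=1/2$, but the common factor $e^{2k}$ cancels from both sides, so that step is also sound.
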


\begin{lemma}[Lemma 11.5, \cite{Mundet_Tian_2009}]\label{lemma: 11.5}
For any $K>0$, there exist constants $\epsilon>0$ with the following property: let $g_0$ be the standard metric on ${\mb C}^n$ and $g$ be another Riemannian metric on ${\mb C}^n$ satisfying $K^{-1} g_0\leq g\leq Kg_0$ and $\|Dg\|_{L^\infty({\mb C}^n, g_0)}\leq K$. Let $x\in {\mb C}^n$ and let $\gamma_0: S^1\to T_x{\mb C}^n$ be a smooth map satisfying $\int \gamma_0=0$ and $\sup |\gamma_0|<\epsilon$. Define $\gamma(\theta)=\exp_x^g \gamma_0(\theta)$ and
$$\gamma_{av}(\theta)=\gamma(\theta)-{1\over 2\pi} \int_0^{2\pi} \gamma(\nu)d\nu.$$
Then $0.9\|\gamma_0\|_{L^2}\leq \|\gamma_{av}\|_{L^2}\leq 1.1 \|\gamma_0\|_{L^2}$ and $0.9\sup|\gamma_0|\leq \sup|\gamma_{av}|\leq 1.1\sup|\gamma_0|$.
\end{lemma}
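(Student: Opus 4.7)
The plan is perturbative: in the flat case $g=g_0$, one has $\exp_x^{g_0}(v)=x+v$, so $\gamma(\theta)=x+\gamma_0(\theta)$; combined with $\int\gamma_0=0$ this gives $\frac{1}{2\pi}\int\gamma=x$, hence $\gamma_{av}=\gamma_0$ identically. For a general $g$ that is $g_0$-equivalent and has bounded first derivatives, the conclusion should follow by controlling the quadratic error in the expansion of $\exp_x^g$ at the origin.

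First I would establish a quantitative Taylor expansion
\begin{equation*}
\exp_x^g(v)=x+v+R_x(v),\qquad |R_x(v)|\leq C(K)\,|v|^2
\end{equation*}
valid for $|v|$ smaller than some $\epsilon_0(K)$. To see this, let $\gamma_v(t)=\exp_x^g(tv)$ solve the geodesic equation $\ddot\gamma_v+\Gamma(\gamma_v)(\dot\gamma_v,\dot\gamma_v)=0$ with $\gamma_v(0)=x$, $\dot\gamma_v(0)=v$. The Christoffel symbols of $g$ are of the schematic form $\Gamma\sim g^{-1}\cdot Dg$, and the hypotheses $K^{-1}g_0\leq g\leq Kg_0$, $\|Dg\|_{L^\infty}\leq K$ yield a pointwise bound $\|\Gamma\|_{L^\infty}\leq C(K)$. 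A Gronwall argument on $|\dot\gamma_v|$ then shows $|\dot\gamma_v(t)|\leq 2|v|$ on $[0,1]$ once $|v|$ is small, and integrating the ODE twice gives $|\gamma_v(1)-x-v|\leq C(K)|v|^2$.

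Next, I substitute $\gamma(\theta)=x+\gamma_0(\theta)+R_x(\gamma_0(\theta))$ into the definition of the average. Since $\int\gamma_0=0$,
\begin{equation*}
\frac{1}{2\pi}\int_0^{2\pi}\gamma(\nu)\,d\nu \;=\; x+\frac{1}{2\pi}\int_0^{2\pi} R_x(\gamma_0(\nu))\,d\nu,
\end{equation*}
so
\begin{equation*}
\gamma_{av}(\theta)-\gamma_0(\theta)\;=\;R_x(\gamma_0(\theta))\;-\;\frac{1}{2\pi}\int_0^{2\pi}R_x(\gamma_0(\nu))\,d\nu.
\end{equation*}
For the sup bound I estimate each term on the right by $C(K)\sup|\gamma_0|^2\leq C(K)\epsilon\sup|\gamma_0|$; choosing $\epsilon$ with $2C(K)\epsilon\leq 0.1$ gives $|\gamma_{av}(\theta)-\gamma_0(\theta)|\leq 0.1\sup|\gamma_0|$, and the triangle inequality yields both sup inequalities. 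For the $L^2$ bound I use the sharper pointwise estimate $|R_x(\gamma_0(\theta))|\leq C(K)\sup|\gamma_0|\cdot|\gamma_0(\theta)|$; combined with Cauchy--Schwarz on the constant average term this gives $\|\gamma_{av}-\gamma_0\|_{L^2}\leq 2C(K)\sup|\gamma_0|\cdot\|\gamma_0\|_{L^2}\leq 2C(K)\epsilon\|\gamma_0\|_{L^2}$, and shrinking $\epsilon$ again concludes both $L^2$ inequalities.

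The main obstacle is purely bookkeeping: one must check that the constant $C(K)$ in the expansion of $\exp_x^g$ depends only on the ellipticity and $C^1$-bounds of $g$, and not on any higher derivatives or on the basepoint $x$. This is precisely what the geodesic ODE analysis above delivers, so no additional regularity hypothesis on $g$ is needed. Everything else is the triangle inequality and the mean-zero condition on $\gamma_0$.
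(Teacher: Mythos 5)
The paper does not prove this lemma at all: it is quoted verbatim from Mundet--Tian (their Lemma 11.5) and used as a black box, so there is no in-paper argument to compare yours against. Judged on its own, your proof is correct and complete: the only real content is the uniform quadratic expansion $\exp_x^g(v)=x+v+R_x(v)$ with $|R_x(v)|\leq C(K)|v|^2$, and your derivation of it is sound --- the hypotheses $K^{-1}g_0\leq g\leq Kg_0$ and $\|Dg\|_{L^\infty}\leq K$ give $\|\Gamma\|_{L^\infty}\leq C(K)$, the bootstrap/Gronwall bound $|\dot\gamma_v|\leq 2|v|$ keeps the geodesic controlled on $[0,1]$ (and uniform equivalence with $g_0$ gives completeness, so $\exp^g$ is globally defined), and integrating the geodesic equation twice yields the remainder bound with a constant independent of $x$ and of higher derivatives of $g$. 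The algebra with the mean-zero condition, the pointwise bound $|R_x(\gamma_0(\theta))|\leq C(K)\sup|\gamma_0|\,|\gamma_0(\theta)|$ for the $L^2$ estimate, and the final triangle inequalities are all fine; the implicit convention that $\gamma_0$ and $\gamma_{av}$ are measured in the same (Euclidean) norm is the only reading under which the constants $0.9$, $1.1$ can hold uniformly in $K$, so that choice is not a gap but the intended interpretation.
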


\begin{rem}\label{remark57} It is easy to see that for any $M>0$, there exists $K_M>0$ such that, if we replace the condition $\sup|\gamma_0|<\epsilon$ by $\sup|\gamma_0|\leq M$, then we have $\sup|\gamma_{av}|\leq K_M\sup|\gamma_0|$.
\end{rem}

\begin{proof}[Proof of Lemma \ref{lemma: x2}] Suppose the lemma is false, then there exists a sequence $\epsilon_u\to 0$, and a sequence of $I_{u, z}$, $g_u$, $\alpha_u$, $\phi_u$, ${\mc X}_u$ which satisfy the following conditions:
\begin{itemize}

\item $\left\| I_{u, z}-I_0 \right\|_{L^\infty}<\epsilon_u$, $ \left\| g_u-g_0 \right\|_{L^\infty}<\epsilon_u$, $\left\|Dg_u \right\|_{L^\infty}\leq K$, $ \left\| {\mc X}_u \right\|_{L^\infty}\leq K$;

\item $\alpha_u$ is in balanced temporal gauge with $\alpha_u|_{\{0\}\times S^1}=0$;

\item $\partial_t \phi_u(z)=I_{u, z} \left( \partial_\theta\phi_u(z)-i\alpha_{u, \theta}(z){\mc X}_u(\phi_u(z)) \right)$; 

\item $\phi_u$ is reflection-symmetric; 

\item $\left\| I_{u, z}-I_0 \right\|_{L^\infty}\leq K  \left| {\rm Im}\phi_u(z) \right|$; 

\item $\left\| \alpha_u \right\|_{L^\infty}<\epsilon_u$, $\left\|d_{\alpha_u}\phi_u \right\|_{L^\infty}<\epsilon_u$, $\phi_u(C)\subset B(0, 2r)$; 

\item $\left\|d\alpha_u \right\|_{L^2}<\epsilon_u \left\| \phi_{u, 0} \right\|_{L^2}$;

\item $\left\| \phi_{u, 0} \right\|_{L^2(Z_{II})}^2> {1\over 3}\left( \left\|\phi_{u, 0} \right\|_{L^2(Z_I)}^2+ \left\| \phi_{u, 0} \right\|_{L^2(Z_{III})}^2\right)$.

\end{itemize}

Then $\left\| d\phi_u \right\|_{L^\infty}\leq \left\| d_{\alpha_u}\phi_u \right\|_{L^\infty}+\left\| \alpha_u \right\|_{L^\infty} \left\| {\mc X}_u \right\|_{L^\infty}\to 0$. This implies that $\|\phi_{u, 0}\|_{L^\infty}\to 0$. Hence by Lemma \ref{lemma: 11.5}, for big enough $u$, one has
\begin{align}\label{53}
\left\| \phi_{u, av} \right\|_{L^2(Z_{II})}^2>{1\over 4} \left( \left\| \phi_{u, av} \right\|_{L^2(Z_I)}^2+ \left\| \phi_{u, av} \right\|_{L^2(Z_{III})}^2\right);
\end{align}
and \begin{align}\label{54}
{\|d\alpha_u\|_{L^2}\over \|\phi_{u, av}\|_{L^2}}\to 0.
\end{align}

Let $x_u=\int_{S^1} \phi_u(0, \theta) d\theta$. By the assumption that $\phi$ is reflection-symmetric, one has that $x_u\in {\mb R}^n$. Set $\xi_u=\phi_u-x_u$. Then $\|d\xi_u\|_{L^2}>0$, for otherwise one would get $\xi_{u, av}=\phi_{u, av}=0$, which contradicts with (\ref{53}).

{\bf Claim.} There exists a constant $M>0$ such that for all $u$, $\left\| \xi_u \right\|_{L^2}\leq M \left\| d\xi_u \right\|_{L^2}$, $\left\| \phi_{u, av} \right\|_{L^2} \leq M \left\| d\phi_{u, av} \right\|_{L^2}$ and  $\left\| \alpha_u \right\|_{L^2}\leq M \left\| d\alpha_u \right\|_{L^2}$, where the norms are taken with respect to the metric $g_0$.

\begin{proof}[Proof of the claim]
Using the Fourier expansion $\xi_u=\sum_k a_{u, k}(t) e^{ik\theta}$. The condition $\int_{S^1}\xi_u(0, \theta) d\theta=0$ implies that $a_{u, 0}(0)=0$. While $d\xi_u= dt \sum_k a_{u, k}'(t) e^{ik\theta}+ d\theta\sum_k a_{u, k}(t) ik e^{ik\theta}$. Hence
$$\int_{-2}^3 \left| a_{u, 0}(t) \right|^2 dt=\int_{-2}^3\left|\int_0^t a_{u, 0}'(s) ds\right|^2 dt\leq \int_{-2}^3  |t|\left(\int_0^t \left| a_{u, 0}'(s) \right|^2 ds\right) dt\leq 100 \int_{-2}^3 \left| a_{u, 0}'(t) \right|^2 dt.$$
So
\begin{multline*}
\|\xi_u\|_{L^2}^2 =\sum_{k\neq 0} \int_{-2}^3 \left| a_{u, k}(t) \right|^2 dt+\int_{-2}^3 \left| a_{u, 0}(t) \right|^2 dt\\
\leq \sum_{k\neq 0} \int_{-2}^3 \left| a_{u, k}(t) ik \right|^2 dt+100 \int_{-2}^3 \left| a_{u, 0}'(t) \right|^2 dt
\leq 100 \left\| d\xi_u \right\|_{L^2}^2
\end{multline*}
Since $\int_{S^1} \phi_{u, av}(0, \theta)=0$ and $\int_{S^1}\alpha_u(0, \theta)=0$, using the same method, we obtain the other two estimates. 
\end{proof}

{\bf Claim.} We have $\left\| \ov\partial_{I_0}\xi_u \right\|_{L^2}/ \left\| d\xi_u \right\|_{L^2}\to 0$ as $u$ goes to infinity.

\begin{proof}[Proof of the claim] We have $\partial_t \phi_u(z)= I_{u, z} \left( \partial_\theta \phi_u- i\alpha_u {\mc X}_u(\phi_u(z)) \right)$. Hence
$$\ov\partial_{I_0} \xi_u=\partial_t \xi_u-I_0\partial_\theta \xi_u=(I_{u, z}-I_0)\partial_\theta \xi_u-i\alpha_u I_{u, z} {\mc X}_u(\phi_u(z)),$$
and
\begin{multline*}
\left\| \ov\partial_{I_0} \xi_u\right\|_{L^2} \leq \left\| I_{u, z}- I_0\right\|_{L^2} \left\| \partial_\theta \xi_u\right\|_{L^\infty} + \left\| i\alpha_u\right\|_{L^2} \left\| I_{u, z} {\mc X}_u\right\|_{L^\infty} \\
\leq K\left\| {\rm Im} \phi_u(z)\right\|_{L^2} \left\| \partial_\theta\phi_u\right\|_{L^\infty}+ 2KM \left\|d\alpha_u\right\|_{L^2}
\leq K\left\| {\rm Im} \xi_u\right\|_{L^2} \left\| d\phi_u\right\|_{L^\infty} + 2KM \epsilon_u \left\|\phi_{u, 0}\right\|_{L^2}  \\
\leq K \left\| \xi_u\right\|_{L^2}\left\| d\phi_u \right\|_{L^\infty}  + 3KM \epsilon_u \left\| \phi_{u, av}\right\|_{L^2}
\leq KM \epsilon_u \left\| d\xi_u\right\|_{L^2} + 2KM^2 \epsilon \left\| d\phi_{u, av}\right\|_{L^2}\\
 \leq \epsilon_u \left( KM \left\| d\xi_u\right\|_{L^2} + 2KM^2 \left\| d\xi_{u, av}\right\|_{L^2} \right).
\end{multline*}
It is easy to see that $\left\| d\xi_u\right\|_{L^2}$ can contral $\left\| d\xi_{u, av}\right\|_{L^2}$. Hence the claim is proved.
\end{proof}

Now define $\xi_u'=\xi_u/\|d\xi_u\|_{L^2}$. Since $\int_{S^1} \xi_u'(0,\theta)d\theta=0$ and $\|d\xi_u'\|_{L^2}=1$, the $L^2$-norm of $\xi_u'$ is uniformly bounded. Hence we have a uniform $W^{1, 2}$-bound of $\xi_u'$. Then since the inclusion $W^{1, 2}\subset L^2$ is compact, there exists a subsequence, which is still denoted by $\xi_u'$, converging in $L^2$ to some $\xi\in L^2(C, {\mb C}^n)$. By the above claim, $\xi$ is a weak solution of $\ov\partial \xi=0$ and hence holomorphic in the interior of $C$. By Garding's inequality $\|\xi_u'-\xi\|_{W^{1, 2}}\leq {\rm const.} (\|\ov\partial(\xi_u'-\xi)\|_{L^2}+ \|\xi_u'-\xi\|_{L^2})$, $\xi_u'$ converges to $\xi$ in $W^{1, 2}$ and hence $\|d\xi\|_{L^2}=1$ and hence not constant.

Since $\xi_u'\to \xi$ in $L^2$, it follows that $\xi_{u, av}'\to \xi_{av}$ in $L^2$. Since $\xi_{u, av}'$ is a rescaling of $\xi_{u, av}$, $\xi_{u, av}=\phi_{u, av}$ and $g_u\to g_0$, inequality (\ref{53}) implies that
$$\|\xi_{av}\|_{L^2(Z_{II})}^2\geq {1\over 4}\left(\|\xi_{av}\|_{L^2(Z_I)}^2+\|\xi_{av}\|_{L^2(Z_{III})}^2\right).
$$
But this will contradicts Lemma \ref{lemma: 11.4} since $\xi$ is non-constant.
\end{proof}

Then we have the corollary in $N_{L, \delta_0}$ instead of ${\mb C}^n$.

\begin{cor}
There exists an $\epsilon>0$ with the following property. Let $I_z$ be a continuous family of almost complex structures on $N_{L, \delta_0}$ parametrized by $z\in C$. Let $(\alpha, \phi): C\to N_{L, \delta_0}$ be an $I_z$-holomorphic pair, which satisfies: 
\begin{itemize}

\item $\alpha$ is in balanced temporal gauge with $\alpha|_{\{0\}\times S^1}=0$;

\item $\phi$ is reflection-symmetric; 

\item $ \left\| I_z-\widetilde{I_0} \right\|_{L^\infty}\leq K_0 {\rm dist}_g (\phi(z), L)$;

\item $\|\alpha\|_{L^\infty}<\epsilon$, $\|d_\alpha\phi\|_{L^\infty}<\epsilon$.
\end{itemize}
Let $\psi$, $\phi_0$ be defined as in Theorem \ref{thm: x1} and suppose that $\|d\alpha\|_{L^2}<\epsilon\|\phi_0\|_{L^2}$. Then we have
$$\|\phi_0\|_{L^2(Z_{II})}^2\leq {1\over 3}\left(\|\phi_0\|_{L^2(Z_I)}^2+\|\phi_0\|_{L^2(Z_{III})}^2\right).$$
\end{cor}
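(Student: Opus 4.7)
The plan is to reduce the statement to its $\mathbb{C}^n$ counterpart, Lemma \ref{lemma: x2}, by pulling the pair back through a chart $f_{y,\delta}$ on $N_{L,\delta_0}$ built in Subsection 5.1.

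First I would observe that reflection-symmetry of $\phi$ combined with the fact that $(0,0)$ is a fixed point of $\tau$ in $C$ forces $\phi(0,0)\in L$; set $y_0 := \phi(0,0)$. The hypotheses $\|\alpha\|_{L^\infty},\|d_\alpha\phi\|_{L^\infty} < \epsilon$ yield $\|d\phi\|_{L^\infty}\leq \epsilon(1+\|\mathcal{X}\|_{L^\infty})$, so on the bounded domain $C$ the image $\phi(C)$ is contained in a neighborhood of $y_0$ that shrinks with $\epsilon$. Because $L$ is compact, one can fix once and for all a finite atlas $\{f_{y_i,\delta_i}\}$ on $L$ with Lebesgue number $\rho>0$ such that every $\rho$-neighborhood of a point in $L$ lies in the image of some chart. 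For $\epsilon$ small, the image of $\phi$ lies inside one such chart $f := f_{y_i,\delta_i}$, with atlas-uniform bi-Lipschitz constants and uniform bounds on $\|D((f^{-1})^*g)\|_{L^\infty}$ and $\|(f^{-1})_*\mathcal{X}\|_{L^\infty}$.

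Next I would push the data through $f$: set $\tilde\phi=f\circ\phi$, $\tilde\alpha=\alpha$, $\tilde g=(f^{-1})^*g$, $\tilde I_z=(f^{-1})^*I_z$, $\tilde{\mathcal{X}}=(f^{-1})_*\mathcal{X}$. The intertwining $f\circ\sigma=\sigma_0\circ f$ turns reflection-symmetry of $\phi$ into $\tilde\phi\circ\tau=\sigma_0\circ\tilde\phi$ and makes $\tilde{\mathcal{X}}$ $\sigma_0$-invariant (using that the $S^1$-action preserves $L$ and commutes with the exponential map normal to $L$). The pair $(\tilde\alpha,\tilde\phi)$ is $\tilde I_z$-holomorphic, $\tilde\alpha$ stays in balanced temporal gauge, and the $L^2$ and $L^\infty$ norms that appear in the hypotheses and conclusion of Lemma \ref{lemma: x2} are equivalent to the original ones up to uniform atlas constants.

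The one hypothesis of Lemma \ref{lemma: x2} needing actual work is $\|\tilde I_z-I_0\|_{L^\infty} \leq K|\mathrm{Im}\,\tilde\phi(z)|$. I would split it by the triangle inequality into $\|\tilde I_z-(f^{-1})^*\widetilde I_0\|_{L^\infty} + \|(f^{-1})^*\widetilde I_0-I_0\|_{L^\infty}$; the first summand is controlled by the standing hypothesis $\|I_z-\widetilde I_0\|_{L^\infty}\leq K_0\,\mathrm{dist}_g(\phi(z),L)$ together with the bi-Lipschitz bound $\mathrm{dist}_g(\phi(z),L)\leq C|\mathrm{Im}\,\tilde\phi(z)|$, while the second is precisely the content of the remark following the definition of $f_{y,\delta}$, giving $\leq K_{y_i,\delta_i}|\mathrm{Im}\,\tilde\phi(z)|$. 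Applying Lemma \ref{lemma: x2} then yields the $\tfrac{1}{3}$-inequality with respect to $\tilde g$, and because $\tilde g$ is bi-Lipschitz to $g$ with constant arbitrarily close to $1$ by choosing the atlas's $\delta_i$ small, the inequality survives the push-back to $g$ after a small readjustment of $\epsilon$. The main technical subtlety is coordinating the two smallness parameters — the $\epsilon$ in the statement and the chart radii $\delta_i$ in the atlas — so that both the analytic smallness required by Lemma \ref{lemma: x2} and the metric comparison hold simultaneously; compactness of $L$ is exactly what allows these constants to be taken independently of the particular pair $(\alpha,\phi)$.
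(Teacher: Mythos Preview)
Your proposal is correct and follows essentially the same route as the paper: reduce to Lemma \ref{lemma: x2} via the charts $f_{y,\delta}$, use compactness of $L$ to pass to a finite atlas, and coordinate the chart radii with the $\epsilon$ from the lemma. One small remark: the conclusion of Lemma \ref{lemma: x2} is already stated ``in terms of $g$'' (the pulled-back metric), so once you apply it in the chart the $\tfrac{1}{3}$-inequality holds exactly for the original $\phi_0$ with respect to the original metric---no bi-Lipschitz push-back or readjustment of $\epsilon$ is needed at that last step.
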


\begin{proof} First, take $\delta$ small enough such that for each $y\in L$, the image of $f_{y, \delta}$ lies in the unit ball. Recall the definition of $K_{y, \delta}$ in (\ref{x: K}). Set $K_y'=3( K_0+ K_{y, \delta}+\|{\mc X}\|_{L^\infty})$ and apply Lemma \ref{lemma: x2} for $r=1$ and $K=K_y'$, one gets an $\epsilon_{y, \delta}$. Then for each $y\in L$, take $\delta(y, \epsilon)<\delta$ such that (\ref{x:2}) holds for the $\epsilon=\epsilon_{y, \delta}$.

Then $\{B_L(y, \delta(y))\}_{y\in L}$ gives an open cover of $L$ and it has a finite subcover $\{B_L(y_i, \delta_i)\}$. Now, there exists $\epsilon'>0$ such that for any pair $(\alpha, \phi)$ with $\|\alpha\|_{L^\infty}<\epsilon'$, $\|d_\alpha\phi\|_{L^\infty}<\epsilon'$, the image must lie in one of the $B_L(y_i ,\delta_i)$'s. Take $\epsilon={\rm min}(\epsilon', \epsilon_{y_i, \delta_i})$. Now we check that this $\epsilon$ satisfies the required condition. Suppose $(\alpha, \phi)$ is a pair and $I_z$ is a family of almost complex structures which satisfy the hypothesis. Then the image $\phi$ is contained in some $B_L(y_i, \delta_i)$ and hence it descends to a problem in ${\mb C}^n$. But by our definition of $K_{y_i}'$, $\epsilon$ and $\delta(y)$, the pair $(\alpha, \phi)$ and $I_z$ satisfies the hypothesis of Lemma \ref{lemma: x2} and hence one has the desired inequality.
\end{proof}

Then, use the same method as in \cite{Mundet_Tian_2009}, we prove Theorem \ref{thm: x1}.
\end{proof}

\subsection{Analogue of Theorem \ref{thm25}}

Now let's consider the following case. Let $I_z$ be a continuous family of almost complex structures on $N_{L, \delta_0}$, parametrized by $z\in C_N$, and let $(\alpha, \phi): C_N\to N_{L, \delta_0}$ be an $I_z$-holomorphic pair. Let $\epsilon$ be that in the previous theorem and $\|\alpha\|_{L^\infty}<\epsilon$, $\|d_\alpha\phi\|_{L^\infty}<\epsilon$. This implies the existence of $\psi$ and $\phi_0$. Furthermore, by implicit function theorem, for any $(t, \theta)\in C_N$,
$$\left| \phi_0(t, \theta)\right|,  \left| \nabla\phi_0(t, \theta) \right|\leq {\rm const.} \sup_{\nu\in S^1} \left|\nabla \phi(t, \nu) \right|.$$

\begin{thm}\label{thm56}
There exist positive constants $K, \sigma$ with the following property. Let $N>0$ be a real number and $(\alpha, \phi): C_N\to N_{L, \delta_0}$ be an $I_z$-holomorphic pair satisfying the hypothesis of the previous theorem. If we have
$$|d\alpha|\leq \eta_0 e^{|t|-N},$$
then for any $t\in [-N, N]$
$$|\psi'(t)|\leq K e^{\sigma(|t|-N)}.$$
\end{thm}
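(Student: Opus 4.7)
The plan is to mimic the argument of \cite{Mundet_Tian_2009} for Theorem \ref{thm25}, specialized to the reflection-symmetric setting where the role of the critical residue $\lambda$ is played by $0$. Since $\alpha$ is in balanced temporal gauge with $\alpha|_{\{0\}\times S^1}=0$, we are effectively in the case $\lambda_0=\lambda=0$, so the gradient-flow term $i(\lambda_0-\lambda)\nabla h(\psi)$ that appears on the left-hand side of Theorem \ref{thm25} drops out, leaving only an exponentially decaying remainder.

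First I would establish that $\psi(t)\in L$ for every $t\in[-N,N]$. This follows from the uniqueness part of the implicit-function-theorem construction of $(\psi,\phi_0)$ used in Theorem \ref{thm: x1}: applying the decomposition $\phi(t,\theta)=\exp_{\psi(t)}(\phi_0(t,\theta))$ to the reflection-symmetric map $\sigma\circ\phi\circ\tau$ yields a competitor with center $\sigma(\psi(t))$, and uniqueness forces $\sigma(\psi(t))=\psi(t)$, i.e.\ $\psi(t)\in L$. Consequently $\mathrm{dist}_g(\phi(t,\theta),L)\leq C|\phi_0(t,\theta)|$, so the hypothesis $\|I_z-\widetilde{I_0}\|\leq K_0\mathrm{dist}_g(\phi(z),L)$ combined with the exponential bound on $|\phi_0|$ from Theorem \ref{thm: x1} gives $\|I_z-\widetilde{I_0}\|\leq C'e^{\sigma(|t|-N)}$. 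Analogously, the balanced gauge together with $|d\alpha|\leq\eta_0e^{|t|-N}$ integrated in $t$ from $0$ yields $|\alpha_\theta(t,\theta)|\leq Ce^{|t|-N}$.

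Next I would average the $I_z$-holomorphic equation $\partial_t\phi=I_z\partial_\theta\phi-iI_z\alpha_\theta\mathcal{X}(\phi)$ over $\theta\in S^1$, after pulling it through the exponential chart. On the left this produces $2\pi\psi'(t)$ plus an $O(|\phi_0||\psi'|+|\phi_0||\partial_t\phi_0|)$ remainder from the $d\exp$ expansion; on the right, the $\widetilde{I_0}\partial_\theta\phi$ contribution integrates to zero by $\theta$-periodicity modulo terms quadratic in $\phi_0$ (using $\int\phi_0\,d\theta=0$ and the fact that the value of $\widetilde{I_0}$ along the $\theta$-slice differs from $\widetilde{I_0}(\psi(t))$ by $O(|\phi_0|)$), while the remaining terms $(I_z-\widetilde{I_0})\partial_\theta\phi$ and $iI_z\alpha_\theta\mathcal{X}(\phi)$ are pointwise bounded by products of the exponentially small factors from the first step and bounded quantities. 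Collecting all the terms, absorbing the $|\phi_0||\psi'|$ contribution (which is valid because $|\phi_0|<\epsilon$ is small), and possibly shrinking $\sigma$, yields the desired bound $|\psi'(t)|\leq Ke^{\sigma(|t|-N)}$.

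The main obstacle is the bookkeeping of the averaging step: one must check that every term on the right-hand side either vanishes identically on $\theta$-average or factors through at least two small quantities (one of which decays exponentially). The structural inputs making this work are the mean-zero property $\int\phi_0\,d\theta=0$, the induced reflection symmetry $\phi_0(t,2\pi-\theta)=d\sigma_{\psi(t)}\phi_0(t,\theta)$, and the balanced temporal gauge that ties the decay of $\alpha_\theta$ to the decay of $d\alpha$, all carried out in a chart in which $\widetilde{I_0}$ is merely $C^1$-close (not equal) to a constant complex structure.
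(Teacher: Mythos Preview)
Your proposal is correct and follows essentially the same route as the paper. The paper first isolates a local-chart lemma (the analogue of \cite[Lemma 12.1]{Mundet_Tian_2009}) bounding $|\psi'(t)|\leq K'(\sup_{Z_t}|\phi_0|+\sup_{Z_t}|\alpha|)$ via exactly the averaging-of-the-holomorphic-equation argument you describe, using reflection symmetry to control $|{\rm Im}\,\phi|$ by $|\phi_{av}|$ (which is equivalent to your observation that $\psi(t)\in L$), and then finishes by invoking the exponential bound on $|\phi_0|$ from Theorem~\ref{thm: x1} together with the integrated curvature bound on $|\alpha_\theta|$, declaring that the remainder ``follows word by word'' from the proof of \cite[Theorem 4.4]{Mundet_Tian_2009}.
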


We first prove a lemma.

\begin{lemma}
Let $K, M>0$ be real numbers, let $I_z$ be a continuous family of complex structures on ${\mb C}^n$, $g$ be a Riemannian metric on ${\mb C}^n$ and ${\mc X}$ be a symmetric vector field on ${\mb C}^n$. Let $V\subset {\mb C}^n$ be a compact subset. Let $\phi: Z\to {\mb C}^n$ be a reflection-symmetric map satisfying: $\phi(Z)\subset V$; for each $z\in Z$, $\|I_z-I_0\|\leq K |{\rm Im} \phi(z)|$ and
$$\partial_t\phi(z)=I_z\left( \partial_\theta\phi(z)-i \alpha{\mc X}(\phi(z)) \right)$$
for some function $\alpha: Z\to i{\mb R}$. Suppose that $\|\nabla\phi\|_{L^\infty}$ is small enough so that $\psi: [-1, 2]\to {\mb C}^n$ and $\phi_0: Z\to {\mb C}^n$ can be defined as in Theorem \ref{thm: x1}, and that
\begin{align}\label{528}
\left| \phi_0 \right|, \left| \partial_\theta\phi_0 \right|, \left| \partial_t\phi_0 \right|\leq M.
\end{align}
Then for any $t\in (0, 1)$ one has
\begin{align}\label{529}
\left| \psi'(t) \right| \leq K'\left( \sup_{Z_t} |\phi_0|+\sup_{Z_t} |\alpha| \right)
\end{align}
for some constant $K'$ independent of $\phi$.
\end{lemma}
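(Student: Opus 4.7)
The plan is to differentiate the defining relation $\phi(t,\theta) = \exp_{\psi(t)}(\phi_0(t,\theta))$ with respect to $t$, integrate over $\theta \in [0,2\pi]$, and exploit the normalization $\int_0^{2\pi}\phi_0\,d\theta = 0$ (which yields $\int_0^{2\pi}\partial_t\phi_0\,d\theta=0$) to isolate $\psi'(t)$. The holomorphicity equation then lets me re-express $\int \partial_t\phi\,d\theta$ in terms of $\partial_\theta\phi$ and $\alpha{\mc X}(\phi)$, which are of the desired order $\sup|\phi_0|+\sup|\alpha|$.

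Concretely, differentiating gives
$$
\partial_t\phi = D_1\exp(\psi,\phi_0)\cdot\psi'(t) + D_2\exp(\psi,\phi_0)\cdot\partial_t\phi_0,
$$
where $D_1,D_2$ denote the partial derivatives of $(x,v)\mapsto \exp_xv$. Since $D_1\exp(\psi,0)=D_2\exp(\psi,0)={\rm Id}$, smoothness of $g$ gives $D_1\exp(\psi,\phi_0), D_2\exp(\psi,\phi_0) = {\rm Id}+O(|\phi_0|)$. Integrating over $\theta$ and using $\int\partial_t\phi_0\,d\theta=0$ together with the pointwise bound (\ref{528}) yields
$$
\int_0^{2\pi}\partial_t\phi\,d\theta = \bigl(2\pi + O(\sup_{Z_t}|\phi_0|)\bigr)\psi'(t) + O(M\sup_{Z_t}|\phi_0|).
$$

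Next, I would substitute $\partial_t\phi = I_z(\partial_\theta\phi - i\alpha{\mc X}(\phi))$ and split $I_z = I_0 + (I_z-I_0)$. The $I_0$-piece integrates to $0$ by $2\pi$-periodicity of $\phi$ in $\theta$; the $(I_z-I_0)$-piece is controlled by $K\sup|{\rm Im}\,\phi|\cdot\sup|\partial_\theta\phi|$ via the hypothesis $\|I_z-I_0\|\leq K|{\rm Im}\,\phi|$; and the $-iI_z\alpha{\mc X}(\phi)$ term contributes $O(\sup|\alpha|)$ since ${\mc X}$ and $I_z$ are bounded on the compact $V$. To convert this into a bound in $\sup|\phi_0|$, I need $|{\rm Im}\,\phi|\leq C|\phi_0|$. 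Because $\phi$ is reflection-symmetric and (in the intended application of this lemma, where $g$ is the pullback of the metric on $N_{L,\delta_0}$ by $f_{y,\delta}$, which intertwines $\sigma$ with $\sigma_0$) the metric $g$ is $\sigma$-invariant, the uniqueness part of the implicit function theorem used to define $(\psi,\phi_0)$ forces $\sigma\psi(t)=\psi(t)$, i.e.\ $\psi(t)\in{\mb R}^n$. A Taylor expansion of $\exp_{\psi(t)}$ around $\phi_0=0$ then gives $|{\rm Im}\,\phi|\leq |\phi_0| + O(|\phi_0|^2)\leq C|\phi_0|$ for small $\phi_0$.

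Combining the two steps produces
$$
\bigl(2\pi - O(\sup_{Z_t}|\phi_0|)\bigr)|\psi'(t)| \leq C\bigl(\sup_{Z_t}|\phi_0| + \sup_{Z_t}|\alpha|\bigr) + CM\sup_{Z_t}|\phi_0|,
$$
and since the smallness of $\|\nabla\phi\|_{L^\infty}$ guarantees that $\sup|\phi_0|$ is as small as needed, the left-hand coefficient stays bounded below by $\pi$, giving the desired $|\psi'(t)|\leq K'(\sup_{Z_t}|\phi_0|+\sup_{Z_t}|\alpha|)$. The main technical obstacle is the pointwise bound $|{\rm Im}\,\phi|\leq C|\phi_0|$: everything hinges on converting the reflection symmetry of $\phi$ into the statement that the ``center'' $\psi(t)$ lies on the fixed set ${\mb R}^n$ of $\sigma$, which is why the $\sigma$-compatibility of $g$ (inherited from the chart construction) is essential. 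All remaining estimates are routine Taylor-expansion and periodicity manipulations.
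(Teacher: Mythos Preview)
Your overall strategy --- differentiate $\phi=\exp_\psi\phi_0$ in $t$, integrate over $\theta$, use $\int\phi_0\,d\theta=0$ to isolate $\psi'$, then substitute the holomorphicity equation and bound the pieces --- is exactly the paper's approach. The one divergence is precisely at the step you flag as the main obstacle: the bound $|{\rm Im}\,\phi|\leq C|\phi_0|$. You obtain it by arguing $\psi(t)\in{\mb R}^n$, which requires $g$ to be $\sigma$-invariant, a hypothesis not present in the lemma statement (though true in the intended application). The paper sidesteps this cleanly: since $\phi$ is reflection-symmetric, the \emph{Euclidean} average $\frac{1}{2\pi}\int\phi(t,\cdot)\,d\theta$ is real regardless of $g$, so ${\rm Im}\,\phi={\rm Im}\,\phi_{av}$ and hence $|{\rm Im}\,\phi|\leq|\phi_{av}|$; then $\sup|\phi_{av}|\leq K_M\sup|\phi_0|$ follows from Remark~\ref{remark57} (the extension of Lemma~\ref{lemma: 11.5} to bounded rather than small $\gamma_0$). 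This route needs only equivalence and derivative bounds on $g$, not any symmetry, so it proves the lemma as stated. Your argument is fine for the application but, as written, imports an extra hypothesis; routing through $\phi_{av}$ removes that dependence.
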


\begin{proof} The proof is almost the same as that of \cite[Lemma 12.1]{Mundet_Tian_2009}. We have $\psi([0, 1])\subset \tilde{V}=\left\{ \exp_x^g v| x\in V, |v|\leq M \right\}$. Let $B\subset {\mb C}^n$ be the closed ball of radius $M$, let $E\times \tilde{V}\to {\mb C}^n$ be the exponential map $E(x, v)=\exp_x^g v$. Let $F=D_x E$ and $G=D_v E$. Since the domain is compact, the second derivatives of $E$ are uniformly bounded. Hence there exists $K_1$ such that for any $(x, v)\in \tilde{V}\times B$,
\begin{align}\label{530}
|F(x, v)-{\rm Id}|\leq K_1 |v|,\ |G(x, v)-{\rm Id}|\leq K_1|v|.
\end{align}
Take partial derivatives with respect to $t$ in the equation $\exp_\psi \phi_0=\phi$, one obtains
$$F(\psi, \phi_0)\psi'+ G(\psi, \phi_0)\partial_t \phi_0=\partial_t \phi.$$
Writing $F={\rm Id}+ F-{\rm Id}$ and $G={\rm Id}+ G-{\rm Id}$, and integrating over $S^1$ of the above equation, making use of the fact that $\int \phi_0(t, \theta) d\theta=0$ and (\ref{528}), (\ref{530}), it yields
\begin{align}\label{532}
\left|\psi'(t)-{1\over 2\pi}\int \partial_t \phi(t, \theta)d\theta\right|\leq K_2\sup_{Z_t}|\phi_0|.
\end{align}

On the other hand,
\begin{align}\label{533}
\partial_t\phi(z)=I_z \left(\partial_\theta\phi(z)-i\alpha {\mc X}(\phi(z))\right)=(I_z-I_0)\left(\partial_\theta\phi(z)\right)+I_0\partial_\theta\phi(z)- i\alpha I_z{\mc X}(\phi(z)).
\end{align}
We also have
$$\partial_\theta \phi= G(\psi, \phi_0) \partial_\theta \phi_0 \Rightarrow \left| \partial_\theta \phi \right|\leq  K_3 \sup \left| \partial_\theta \phi_0 \right|\leq K_3 M.$$

Integrating (\ref{533}) over $S^1$, using the above estimate and see there exist $K_4$ and $K_5$ such that
\begin{multline*}
 \left|\int_{S^1}  \partial_t \phi(t, \theta)d\theta\right|\leq K_4\left(\sup_{Z_t} |{\rm Im}(\phi)|+\sup_{Z_t}|\alpha|\right)\\
\leq K_4\left(\sup_{Z_t}|\phi_{av}|+\sup_{Z_t}|\alpha|\right)\leq K_5\left(\sup_{Z_t}|\phi_0|+\sup_{Z_t}|\alpha|\right).
\end{multline*}
Here $|{\rm Im} \phi|\leq |\phi_{av}|$ uses the fact that $\phi$ is reflection-symmetric; the last inequality uses the estimate in the Remark \ref{remark57}. Hence combining (\ref{532}) and above, one obtains (\ref{529}).
\end{proof}

\begin{proof}[Proof of theorem \ref{thm56}] It follows word by word from the proof of \cite[Theorem 4.4]{Mundet_Tian_2009}. 
\end{proof}

\begin{cor}\label{cor58}
Let $c\in i{\mb R}$. Then there exist real numbers $\epsilon_5, \sigma_5, \eta_0, K_5>0$ such that for any positive $N>0$ and any $I$-holomorphic pair $(\alpha, \phi): C_N^+\to X$, which satisfies
\begin{align*}
|d\alpha|\leq \eta_0 e^{|t|- N},\  \left\| d_\alpha \phi \right\|_{L^\infty}\leq \epsilon_5,
\end{align*}
one has
$${\rm diam}_{S^1} \phi\left( S_{N/2}^+ \right)\leq K_5 e^{-\sigma_5 N}.$$
\end{cor}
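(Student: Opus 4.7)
The plan is to reduce to Theorems \ref{thm: x1} and \ref{thm56} via the reflection procedure of Subsection 5.2. Reading $C_N^+$ in the statement as the strip $S_N^+$ with $\phi(\partial S_N^+)\subset L$ (the setting of this whole section), and taking $\epsilon_5$ small enough that the image of $\phi$ lies in the tubular neighborhood $N_{L,\delta_0}$, I apply the four-step construction: a gauge transformation $g_1$ putting $\alpha$ in temporal gauge on $S_N^+$; reflection across the boundary arcs via the involution $\sigma$ on $N_{L,\delta_0}$, producing a pair $(\tilde\alpha,\tilde\phi)$ on $C_N$ together with a continuous family $I_z$ of almost complex structures satisfying $\|I_z-\widetilde{I_0}\|_{L^\infty}\leq K_0\,\mathrm{dist}(\tilde\phi(z),L)$; and a second gauge transformation $g_2$ putting $\tilde\alpha$ in balanced temporal gauge with $\tilde\alpha|_{\{0\}\times S^1}=0$, while keeping $\tilde\phi$ reflection-symmetric. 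Since all of these operations act on $\phi$ pointwise by elements of $S^1$, the $S^1$-orbit of $\phi(z)$ coincides with that of $\tilde\phi(z)$ for $z\in S_N^+$, so it suffices to bound the Euclidean diameter of the reflected image over the half-strip.

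Next I verify that the hypotheses of Theorems \ref{thm: x1} and \ref{thm56} transfer to $(\tilde\alpha,\tilde\phi)$ on $C_N$ once $\epsilon_5,\eta_0$ are chosen small enough. The $L^\infty$ bounds on $\tilde\alpha$ and $d_{\tilde\alpha}\tilde\phi$ follow from those on $(\alpha,\phi)$, since reflection preserves pointwise norms and the gauge transformations are constructed so that the resulting connection form stays small (starting in temporal gauge, $g_2$ is then bounded by the integral of $\tilde\alpha_t$). The exponential curvature bound $|d\tilde\alpha|\leq\eta_0 e^{|t|-N}$ transfers directly from $|d\alpha|\leq\eta_0 e^{|t|-N}$, because $d\tilde\alpha$ is obtained from $d\alpha$ by reflection and a gauge change. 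Theorem \ref{thm: x1} then yields a decomposition $\tilde\phi(t,\theta)=\exp_{\psi(t)}\phi_0(t,\theta)$ on $Z_N$ with $|\phi_0(t,\theta)|\leq K e^{\sigma(|t|-N)}$, while Theorem \ref{thm56} gives $|\psi'(t)|\leq K e^{\sigma(|t|-N)}$.

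The diameter estimate now follows by a triangle inequality. For any two points $z_i=(t_i,\theta_i)$ with $|t_i|\leq N/2+2$,
\begin{equation*}
d(\tilde\phi(z_1),\tilde\phi(z_2))\leq |\phi_0(z_1)|+|\phi_0(z_2)|+\int_{t_1}^{t_2}|\psi'(s)|\,ds.
\end{equation*}
The first two terms are bounded by $2K e^{-\sigma N/2+2\sigma}$, and the integral is bounded by $\int_{-N/2-2}^{N/2+2}K e^{\sigma(|s|-N)}\,ds\leq (2K/\sigma)e^{-\sigma N/2+2\sigma}$. Hence the Euclidean diameter of the reflected image over $\{|t|\leq N/2+2\}$ is $\leq K_5 e^{-\sigma_5 N}$ with $\sigma_5=\sigma/2$, and by the orbit observation above, $\mathrm{diam}_{S^1}\phi(S_{N/2}^+)\leq K_5 e^{-\sigma_5 N}$, as required.

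The main obstacle is the uniformity and ordering of constants: one must choose $\epsilon_5,\eta_0$ small enough that (i) the reflection is well defined (i.e.\ $\phi(S_N^+)\subset N_{L,\delta_0}$), (ii) the reflected pair falls within the hypotheses of Theorems \ref{thm: x1} and \ref{thm56} (whose own $\epsilon,\eta_0$ must be fixed first), and (iii) the final constants $K_5,\sigma_5$ are independent of $N$. The only real subtlety is controlling $\|\tilde\alpha\|_{L^\infty}$ after the two gauge transformations, but this follows from the explicit construction of $g_1,g_2$ in Subsection 5.2 together with the smallness of $\|d_\alpha\phi\|_{L^\infty}$, which bounds $\|\alpha\|_{L^\infty}$ after the balanced-temporal normalization.
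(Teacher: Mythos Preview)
Your approach is correct and is exactly what the paper intends: the corollary is placed immediately after Theorems \ref{thm: x1} and \ref{thm56} precisely so that it follows by combining the reflection construction of Subsection 5.2 with those two estimates, and then integrating $|\psi'|$ and adding the $|\phi_0|$ contributions as you do.

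One small correction to your final paragraph: the quantity that controls $\|\tilde\alpha\|_{L^\infty}$ after the second gauge transformation is the curvature bound, not $\|d_\alpha\phi\|_{L^\infty}$. Indeed, after $g_2$ one has $g_2^*\tilde\alpha=-\bigl(\int_0^t\partial_\theta\tilde\alpha_t\,ds\bigr)d\theta$, and $\partial_\theta\tilde\alpha_t$ is (up to sign) the curvature density of the reflected connection; hence $|g_2^*\tilde\alpha(t,\theta)|\leq\int_0^{|t|}\eta_0 e^{s-N}\,ds\leq\eta_0$, so taking $\eta_0$ smaller than the $\epsilon$ of Theorem \ref{thm: x1} suffices. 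With this fix your ordering of constants is clean and the argument is complete.
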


\section{Definition of $(c, L)$-stable twisted holomorphic maps}

\subsection{Prestable bordered Riemann surfaces}

\begin{defn}
Let $(x, y)$ be the coordinate on ${\mb C}^2$, and $A(x, y)=(\ov{x}, \ov{y})$ be the complex conjugation. A {\bf node} on a bordered Riemann surface is a singularity isomorphic to one of the following:
\begin{enumerate}
\item $(0, 0)\in \{xy=0\}$ (interior node);

\item $(0, 0)\in \left\{ x^2+y^2=0 \right\}/A$ (boundary node of type 2);

\item $(0, 0)\in \left\{ x^2-y^2=0 \right\}/A$ (boundary node of type 1).

\end{enumerate}
A {\bf nodal bordered Riemann surface} is a singular bordered Riemann surface whose singularities are nodes. A {\bf prestable bordered Riemann surface} is either a smooth bordered Riemann surface or a nodal bordered Riemann surface.
\end{defn}

The {\bf complex double} of a prestable bordered Riemann surface $C$ is a prestable Riemann surface without boundary, which is intuitively, gluing $C$ and $\ov{C}$(same curve with opposite complex structure) along the corresponding boundary components.

Let $g$ be a nonnegative integer and $h$ a positive integer. A {\it smooth} bordered Riemann surface $C$ is said to be {\bf of type $(g, h)$} if $C$ is topologically a sphere attached with $g$ handles and $h$ disks removed. A nodal bordered Riemann surface is of type $(g, h)$ if it is a degeneration of a smooth bordered Riemann surface of the same type. The reader may refer to \cite{Melissa_Liu_Thesis} for the precise definition.

Let $C$ be a prestable bordered Riemann surface, let ${\bf z}\subset C$(resp. ${\bf x}\subset \partial C$) be a finite subset of interior(resp. boundary) points which is disjoint from nodes. Then the tuple $(C, {\bf x}\cup {\bf z})$ is called a {\bf marked nodal bordered Riemann surface}. There is a unique curve $\hat{C}$ which is the disjoint union of finitely many components which are either smooth marked bordered Riemann surface or marked smooth Riemann surface without boundary, together with a holomorphic map $\hat\pi: \hat{C} \to C$. This map is called the {\bf normalization} of $C$. $\hat\pi$ is 1-1 away from interior nodes and boundary nodes of type 1; The preimage of every interior node or boundary node of type 1 is two points.

\subsection{Meromorphic connections on prestable marked bordered Riemann surfaces}

Now let $(C, {\bf x}\cup {\bf z})$ is a pretable marked bordered Riemann surface. Let $\hat\pi: \hat{C} \to C$ be the normalization map. Denote by ${\bf w}$ the set of nodes of $C$, with ${\bf w}^{int}$, ${\bf w}^{b1}$, ${\bf w}^{b2}$ the subsets of different types of nodes, respectively. Given an $S^1$-principal bundle $P$ over $\hat{C}\setminus \hat\pi^{-1} \left( {\bf z}\cup {\bf w}^{int}\cup {\bf w}^{b2} \right)$.

\begin{defn}A meromorphic connection on $P$ is a connection $A$ on $P$ such that $F_A$ is bounded for any smooth metric on $\hat{C}$ and that for every $w\in {\bf w}^{int}$ with preimages $y, y'\in \hat{C} $, we have
$${\rm Hol}(A, y)={\rm Hol}(A, y')^{-1}.$$
\end{defn}

\subsection{Gluing data at interior nodes and boundary nodes of type 1}

Now let $(C, {\bf x}\cup {\bf z})$ be a smooth marked bordered Riemann surface, let $P\to {C}\setminus {\bf z} $ be an $S^1$-principal bundle and let $A$ be a meromorphic connection on $P$. Let $z\in {\bf z}$ and let $S_z$ be the quotient space of $T_z C-\{0\}$ by the action of ${\mb R}_+ $ given by scalar multiplication. Choose a conformal metric on $C$ and let $S_\epsilon\subset C$ be the circle of radius $\epsilon$ centered at $z$, and denote $(P_\epsilon, A_\epsilon)$ be the restriction of $(P, A)$ to $S_\epsilon$. Let $P_z\to S_z$ be the $S^1$-principal bundle whose fibre over the class of $s\in T_z C-\{0\}$ is the set of $A$-covariantly constant sections of $P$ defined over the ray $\{\exp ts| t\in (0, \delta)\}$ for some small $\delta$. Parallel transport in radial directions gives the canonical isomorphism of bundles $\iota_\epsilon: P_z\to P_\epsilon$ for small enough $\epsilon$ and the connections $\iota_\epsilon^*A_\epsilon$ converges to a connection $A_z$ on $P_z$. The pair $(P_z, A_z)$ is independent of the conformal metric on $C$. We call $(P_z, A_z)$ the {\bf limit pair} of $(P, A)$ at $z$.

Now let $(C, {\bf x}\cup {\bf z})$ be a marked nodal bordered Riemann sufrace and let $\hat\pi: \hat{C}\to C$ be the normalization. If $w\in {\bf w}^{int}$ with preimages $y, y'$. Then the circles $S_y, S_{y'}$ have natural structures of $S^1$-torsors. Define $G_w(C)$ to be the set of all isomorphisms of $S^1$-torsors $$g_C: S_y\to \ov{S_{y'}},$$ where $\ov{S_{y'}}$ is $S_{y'}$ with the inverse $S^1$-torsor structure.

Suppose $P$ is an $S^1$-principal bundle over $\hat{C}\setminus \hat\pi^{-1}\left({\bf z}\cup {\bf w}^{int}\cup {\bf w}^{b2} \right)$, and $A$ is a meromorphic connection on $P$. 

Let $(P_y, A_y)$ and $(P_{y'}, A_{y'})$ be the limit pairs of $(P, A)$ at $y, y'$ respectively. Then define the set of {\bf gluing data} at $w\in {\bf w}^{int}$ to be the set of pairs $(g_C, g_P)$, where $g_C\in G_w(C)$ and $g_P: P_y\to g_C^* P_{y'}$ is an isomorphism of $S^1$-bundles such that $g_P^* g_C^* A_{y'}= A_y$.

Now suppose $w' \in {\bf w}^{b1}$ is a boundary node of type 1 with preimages $y, y'$. Note that $P$ is defined over $y$ and $y'$. Then we define the set of gluing data at $w'$ to be ${\rm Iso}(P_{y}, P_{y'})$, the set of isomorphisms between the two fibres.

\subsection{Stable marked bordered curves and the moduli space}

Let $(C, {\bf x}\cup {\bf z})$ be a prestable marked bordered Riemann surface. We say that a point in $C$ is exceptional if it is either a node or a marked point. Let $\hat\pi: \hat{C}\to C$ be the normalization. The exceptional points in $\hat{C}$ is by definition the preimages of the exceptional points in $C$. An irreducible component $D$ of $C$ is called unstable, if $\pi^{-1}(D)$ satisfies one of the following conditions: 
\begin{itemize}
\item it is a sphere with 2 or less exceptional points; 

\item it is a torus with no exceptional points; 

\item it is a disc with 1 interior exceptional point and no exceptional points on the boundary; 

\item it is a disc with no interior exceptional points and 2 or less exceptional points on the boundary; 

\item it is a ``half-torus'' with no exceptional points, i.e. its {\it complex double} is a 2-torus without exceptional points.
\end{itemize}
If no component of $\hat{C}$ is unstable, then we say that $C$ is {\bf stable}. 

We say that $(g, h, n, {\ora m})$ is in the {\it stable domain}, if a smooth bordered Riemann surface of genus $g$ and $h$ boundary components, with $(n, {\ora m})$-marked points is stable. For the moduli space ${\mc M}_{(g, h), (n, \ora{m})}$ of smooth bordered Riemann surfaces of genus $g$ and $h$ boundary components, with $(n, \ora{m})$-marked points, and its compactification $\ov{\mc M}_{(g, h), (n, \ora{m})}$ of stable curves, the reader may refer to \cite{Melissa_Liu_Thesis}. From now on we fix a $(g, h, n, \ora{m})$ in the stable domain.

\subsection{Collapsing and stablization maps}\label{section65}

Let $(C, {\bf x}\cup {\bf z})$ be a prestable marked bordered Riemann surface. A {\bf collapsing map} is a quotient map $\pi^{co}: C\to C'$, where $(C', {\bf x}'\cup {\bf z}')$ is also a prestable marked bordered Riemann surface, and $\pi^{co}$ is quotienting some of the unstable components of $C$. The marked point set ${\bf x}'\cup {\bf z}'$ is naturally induced from ${\bf x}\cup {\bf z}$, such that $\pi^{co}: {\bf x}\to {\bf x}'$, $\pi^{co}: {\bf z}\to {\bf z'}$ are bijections.

Now for a given collapsing map $\pi^{co}: C\to C'$. Then each node of $C'$ falls in one of the following cases:

\begin{enumerate}
\item If $w$ is an interior node, one can find sequences of points of $\hat{C}$, $(y_0, \ldots, y_{d-1})$ and $(y_1', \ldots, y_d')$, which satisfy the following properties:

\begin{itemize}

\item the points $\{y_0, \ldots, y_{d-1}\}$ and $\{y_1', \ldots, y_d'\}$ are all different;

\item $\hat{\pi}(y_0)$ and $\hat{\pi}(y_d')$ belong to componens on which $\pi^{co}$ is a local bijection;

\item each pair $\{\hat{\pi} (y_j), \hat{\pi} (y_j')\}$ is contained in a sphere bubble $O_j\subset C$ for each $1\leq j\leq d-1$;

\item for each $j$ we have $\hat{\pi} (y_j)=\hat{\pi} (y_{j+1}')$;

\item we have $\pi^{co} \left( \hat{\pi}(y_j) \right) =\pi^{co}\left(  \hat{\pi}(y_j') \right) =w$ for each $j$.

\end{itemize}

We call the sequences $(y_0, \ldots, y_{d-1})$, $(y_1', \ldots, y_d')$ an {\bf ordered list of connecting exceptional points over $w$}.

\item If $w'$ is a boundeary node of type 1, one can find sequences of points of $\partial \hat{C}$, $(y_0, \ldots, y_{d-1})$ and $(y_1', \ldots, y_d')$, which satisfies the following properties:

\begin{itemize}

\item the points $\{y_0, \ldots, y_{d-1}\}$ and $\{y_1', \ldots, y_d'\}$ are all different;

\item $\hat\pi(y_0)$ and $\hat\pi(y_d')$ belong to components on which $\pi^{co}$ is a local bijection;

\item each pair $\{\hat{\pi} (y_j), \hat{\pi} (y_j')\}$ is contained in a disk bubble $D_j\subset C$ for each $1\leq j\leq d-1$;

\item for each $j$ we have $\hat{\pi} (y_j)=\hat{\pi} (y_{j+1}')$;

\item we have $\pi^{co} \left(\hat{\pi} (y_j)\right)=\pi^{co} \left( \hat{\pi} (y_j')\right) =w' $ for each $j$.

\end{itemize}
We call the sequences $(y_0, \ldots, y_{d-1})$, $(y_1', \ldots, y_d')$ an {\bf ordered list of connecting exceptional points over $w'$}.

\item If $w''$ is a boundary node of type 2, one of the following two cases must hold.

{\bf Case 1.} One can find a sequence of points of $\hat{C} $, $(y_0, \ldots, y_{d-1}$) and $(y_1', \ldots, y_{d-1}')$ which satisfy the following properties:

    \begin{itemize}
    \item the points $\{y_0, \ldots, y_{d-1}\}$ and $\{y_1', \ldots, y_{d-1}'\}$ are all different;

    \item $\hat\pi(y_0)$ belong to a component of $C$ on which $\pi^{co}$ is a local bijection, and $\hat{\pi} (y_{d-1})$ is a boundary node of type 2;

    \item each pair $\{\hat{\pi} (y_j), \hat{\pi} (y_j)\}$ is contained in a sphere bubble $B_j\subset C$ for each $1\leq j\leq d-1$;

    \item for each $j$ with $0\leq j\leq j-2$ we have $\hat{\pi} (y_j)=\hat{\pi} (y_{j+1}')$;

    \item we have $\pi^{co}\left(\hat{\pi} (y_j)\right)=\pi^{co} \left( \hat{\pi} (y_j')\right)=w''$ for each $j$.
\end{itemize}

{\bf Case 2.} One can find a sequence of points of $\hat{C} $, $(y_0, \ldots, y_{d-1})$ and $(y_1', \ldots, y_d')$ which satisfy the following properties:
\begin{itemize}

    \item the points $\{y_0,\ldots, y_{d-1}\}$ and $\{y_1', \ldots, y_d'\}$ are all different;
    
    \item $\hat\pi(y_0)$ belong to a component of $C$ on which $\pi^{co}$ is a local bijection;
    
    \item each pair $\left\{ \hat{\pi}  (y_j) ,\hat{\pi}  (y_j') \right\}$ is contained in a sphere bubble $O_j$ for $1\leq j\leq d-1$ and $y_d'$ is contained in a disk bubble $O_d$;
    \item for each $j$ with $0\leq j\leq j-1$ we have $\hat{\pi} \left( y_j \right) =\hat{\pi} \left( y_{j+1}' \right)$;
    \item we have $\pi^{co} \left(\hat{\pi} (y_j) \right)=\pi^{co}\left( \hat{\pi} (y_j') \right)=w''$ for each $j$.
\end{itemize}
In either case, we call the corresponding list of points an {\bf ordered list of connecting exceptional points over $w''$}. The difference is that in the first case a disk is collapsed and in the second case there is no disk.
\end{enumerate}

In particular, the collapsing map collapses all the unstable components of $C$ is called the {\bf stablization map}, and denoted by $\pi^{st}: (C, {\bf x}\cup {\bf z})\to (C^{st}, {\bf x}^{st}\cup {\bf z}^{st})$. In this case, the components of $C$ that are contracted by $\pi^{st} $ are called {\bf bubble components}(since there is no unstable torus component), and others are called {\bf principal components}. A bubble component is either a sphere or a disk, which are called respectively a {\bf sphere bubble} or a {\bf disk bubble}. The points $y_j$ or $y_j'$ appeared above are called {\bf connecting exceptional points}. The {\bf connecting nodes} are the images of the connecting exceptional points under the map $p$. The nodes which are not connecting are called {\bf tree nodes}. We call the bubbles $O_i$ or $D_j$ the {\bf connecting bubbles} over $w$. We call {\bf tree bubbles} those bubbles which are not connecting.

\subsection{Volume forms}

Let $g, n$ be nonnegative integers with $2g+n\geq 3$ and let $\ov{\mc M}_{g, n}$ be the moduli space of stable curves and let $f: \ov{\mc M}_{g, n+1}\to \ov{\mc M}_{g, n}$ be the forgetful map. Then we can choose a smooth(in the orbifold sense) Hermitian metric $h$ on the universal curve $\ov{\mc M}_{g, n+1}$, which gives a conformal metric $h_{(C, {\bf x})}$ on each stable curve $(C, {\bf x})$ with $[(C, {\bf x})]\in \ov{\mc M}_{g, n}$ and the metric is invariant under the automorphism group of $(C, {\bf x})$. We call $h_{(C, {\bf x})}$ the {\bf canonical metric} on $(C, {\bf x})$ and its volume form the {\bf canonical volume form} on $(C, {\bf x})$. From now on we fix such an $h$ for each pair $(g, n)$.

Now suppose $(C, {\bf x}\cup {\bf z})$ be a marked stable bordered Riemann surface. Then it has a {\bf complex double}, which is a stable curve $\left( C_{\mb C}, {\bf z}_{\mb C}=\left\{ z_i^\pm, x_j\right\} \right)$ with an anti-holomorphic involution $\tau$ such that $\tau(z_i^+)=\tau(z_i^-)$ and $\tau(x_j)=x_j$. Then the metric ${1\over 2} \left( h_{\left(C_{\mb C}, {\bf z}_{\mb C}\right)}+\tau^* h_{\left(C_{\mb C}, {\bf z}_{\mb C}\right)} \right) $ is a metric which descends to a metric on the original bordered curve $C$. We denoted it by $h_{(C, {\bf x}\cup {\bf z})}$ and call it the {\bf canonical metric} on $(C, {\bf x}\cup {\bf z})$ and its volume form the {\bf canonical volume form} on $(C, {\bf x}\cup {\bf z})$. 

In the following, for any prestable curve $C$ with stablization $C^{st}$, we will use metrics on each irreducible components of $C$ as: if $D$ is a principal component, then use the metric induced from the canonical metric $h_{C^{st}}$; if $D$ is a bubble component, then use an arbitrary smooth metric.

\subsection{Definition of $(c, L)$-STHM's} 

Pick an element $c\in i{\mb R}$, and $(g, h, n, {\overrightarrow m})$ in the stable domain. A {\bf $(c, L)$-stable twisted holomorphic map} from a bordered curve of genus $g$, with $h$ boundary components and $(n, {\overrightarrow m})$ marked points is a tuple
$${\mc C}= \left((C, {\bf x}\cup{\bf z}), (P, A, \varphi), G, \{{\mc T}_w\}\right)$$
where

\begin{enumerate}

\item $(C, {\bf x}\cup {\bf z})$ is a marked prestable bordered Riemann surface and the isomorphism class of its stabilization belongs to $\ov{\mc M}_{(g, h), (n, {\overrightarrow m})}$.

\item $P$ is an $S^1$-principal bundle over $\hat{C}\setminus \hat\pi^{-1}\left( {\bf z}\cup {\bf w}^{int}\cup {\bf w}^{b2} \right)$, where $\hat\pi: \hat{C}\to C$ is the normalization map.

\item $A$ is a meromorphic connection on $P$.

\item $\varphi$ is a smooth section of the bundle $P\times_{S^1} M$, such that $\varphi(x)\in P\times_{S^1} L$ for any $x$ in a boundary circle(but not boundary nodes of type 2) of $C$.

\item $G=\left\{ G_w \right\}$ is a choice of gluing data at each node $w\in {\bf w}^{int} \cup {\bf w}^{b1}$.

\item For each interior connecting node $z\in C$ with preimage $\left\{ y_j, y_j' \right\}$ in $C'$, ${\mc T}_z$ is a pair of sections ${\mc T}_y: S_y\to P_y\times_{S^1} {\mc T}(X)$ and ${\mc T}_{y'}: S_{y'}\to P_{y'}\times_{S^1} {\mc T}(X)$ satisfying ${\mc T}_y={\mc T}_{y'}\circ g_z$.

\item For each boundary node $w''\in C$ of type 2, ${\mc T}_{w''}$ is a section ${\mc T}_{w''}: S_{w''}\to P_{w''} \times_{S^1} {\mc T}(X)$.
\end{enumerate}
The tuple ${\mc C}$ must satisfy the following conditions.

\begin{enumerate}

\item {\bf The section is holomorphic.} The section $\varphi$ satisfies the equation
$$\ov\partial_A \varphi=0.$$

\item {\bf Vortex equation.} For each {\it principal component} $D\subset C$, let $\nu_D$ (resp. $A_D$, $\phi_D$) be the restriction of $s^* v_{(C^{st}, {\bf x}^{st}\cup {\bf z}^{st})}$(resp. $A$, $\phi$), where $s: (C, {\bf x}\cup {\bf z})\to (C^{st}, {\bf x}^{st}\cup {\bf z}^{st})$ is the stabilization map; then
$$\iota_{\nu_D} F_{A_D}+\mu(\varphi_D)=c.$$

\item {\bf Flatness on bubbles.} The restriction of $A$ to each bubble component is flat.

\item {\bf Critical holonomy at interior marked points.} For any interior marked point $z\in {\bf z}$, the holonomy ${\rm Hol}(A, z)$ is critical.

\item {\bf Finite energy.} $\left\| d_A\varphi \right\|_{L^2(C)}<\infty$.

Let $y\in \hat{C}$ be the preimage of an interior node or a boundary node of type 2 of $C$ in the normalization. We denote by $S_\epsilon\subset \hat{C}$ the circle of radius $\epsilon$ centered at $y$, and by $\iota_\epsilon: P_y\to P|_{S_\epsilon}$ be the natural isomorphism given by parallel transport with respect to $A$. Then the section $\varphi_\epsilon=\iota_\epsilon^*\varphi$ converges in $C^0$-topology, as $\epsilon\to 0$, to a section $\varphi_y$ of $P_y\times_{S^1} X$ satisfying $d_{A_y} \varphi_y=0$. We call $(P_y, A_y, \varphi_y)$ the {\bf limit triple} of $(P, A, \varphi)$ at $y$.

\item {\bf Stabilizers of monotone chains of gradient segments.}

$\bullet$ Take any interior connecting node $w\in {\bf w}$, let $y, y'$ be its preimage in the normalization, and let ${\mc T}_y: P_y\to {\mc T}(X)$, ${\mc T}_{y'}: P_{y'}\to {\mc T}(X)$ the equivariant maps with values in the set of chains of gradient segments. Then ${\mc T}_y$ and ${\mc T}_{y'}$ are covariantly constant, i.e., take ${\mc T}_y$ for example, if we identify $S_y$ with $S^1$ and trivialize $P_y\to S_y$ such that $d_{A_y}=d+\lambda d\theta$, then the corresponding map ${\mc T}_y: S_y\to {\mc T}(X)$ satisfies ${\mc T}_{y}(\theta)=e^{\lambda\theta} {\mc T}_y(0)$. Here $\lambda$ is the residue at $y$ with respect to the trivialization. This implies that the images of ${\mc T}_y$ and ${\mc T}_{y'}$ are contained in $X^\lambda$. Another consequence is that if $\lambda$ is not critical, then the maps ${\mc T}_y$, ${\mc T}_{y'}$ are degenerate.

$\bullet$ For any boundary node of type 2 $w''\in {\bf w}^{b2}$, let ${\mc T}_{w''}: P_{w''}\to {\mc T}(X)$. Then ${\mc T}_{w''}$ is covariantly constant.

\item  {\bf Matching condition and monotonicity at interior connecting nodes.}

Let $w\in C^{st}$ be an interior node. Pick an ordered list of connecting exceptional points over $w$, $(y_0, \ldots, y_{d-1})$, $(y_1', \ldots, y_d')$. For each $j$, denote by $(P_j, A_j, \varphi_j)$ and $(P_j', A_j', \varphi_j')$ the limit triples of $(P, A, \varphi)$ at the points $y_j$ and $y_j'$ respectively. Let $\Phi_j: P_j\to X$ and $\Phi_j': P_j'\to X$ be the equivariant maps corresponding to $\varphi_j$ and $\varphi_j'$ respectively. Suppose that $y_{j-1}$ and $y_j'$ are the preimages of a connecting node $z_j\in C$. Then each ${\mc T}_{z_j}$ is a pair of maps ${\mc T}_{j-1}: P_j\to {\mc T}(X)$ and ${\mc T}_j': P_j\to {\mc T}(X)$. Recall that there are maps $b, e: {\mc T}(X)\to X$ given by the beginning and end of gradient segments. Finally, let $g_j: P_j\to P_{j+1}'$ be the isomorphism given by the gluing data at $z_j$. Then the matching condition requires that either for all $j$,  $\Phi_{j-1}=b\circ {\mc T}_j'\circ g_{j-1}$ and $\Phi_j'=e\circ {\mc T}_j'$ or the same equations hold with $b$ and $e$ switched for every $j$(see Figure \ref{figure1}).
\begin{figure}[htbp]
\centering
\includegraphics[scale=0.70]{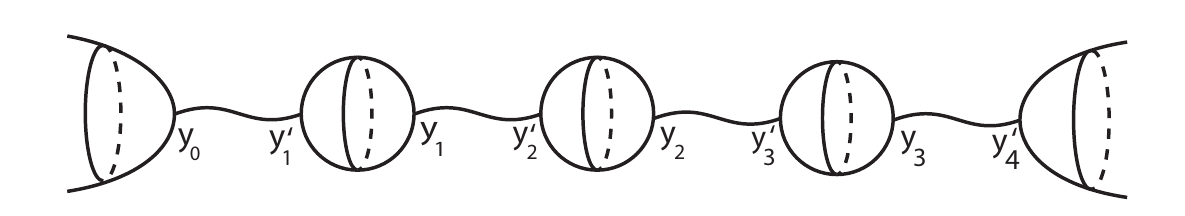}
\caption{ Chains of gradient segments and connecting bubbles at an interior node with $d=4$}
\label{figure1}
\end{figure}

\item {\bf Matching condition and monotonicity at boundary connecting nodes of type 2.}

Let $w''\in C^{st}$ be a boundary node of type 2. There are two possibilities on the connecting bubbles over $w''$ in $C$. The first is a sequence of bubbles $O_1, \ldots, O_d$ with $O_1, \ldots, O_{d-1}$ are sphere bubbles, and $O_d$ is a disk bubble. Then we have an ordered list of connecting exceptional points, $(y_0, \ldots, y_{d-1})$, $(y_1', \ldots, y_d')$. With the notations used in the above condition, we should also have $\Phi_{j-}=b\circ {\mc T}_j'\circ g_{j-1}$ and $\Phi_j'=e\circ {\mc T}_j'$ or the same equations with $b$ and $e$ switched. The second possibility is a sequence of sphere bubbles $O_1, \ldots, O_{d-1}$ with $O_{d-1}$ has a boundary node of type 2. Then using the above notation, we have either, for $1\leq j\leq d-1$, we have $\Phi_{j-1}=b\circ {\mc T}_j'\circ g_{j-1}$ and $\Phi_j'=e\circ {\mc T}_j'$, and $\Phi_{d-1}=b\circ {\mc T}_{d-1}$, with the {\bf boundary condition} $e\circ {\mc T}_{d-1}\in L$; or the same equations with $b$ and $e$ switched(see Figure \ref{figure2}).
\begin{figure}[htbp]
\centering
\includegraphics[scale=0.60]{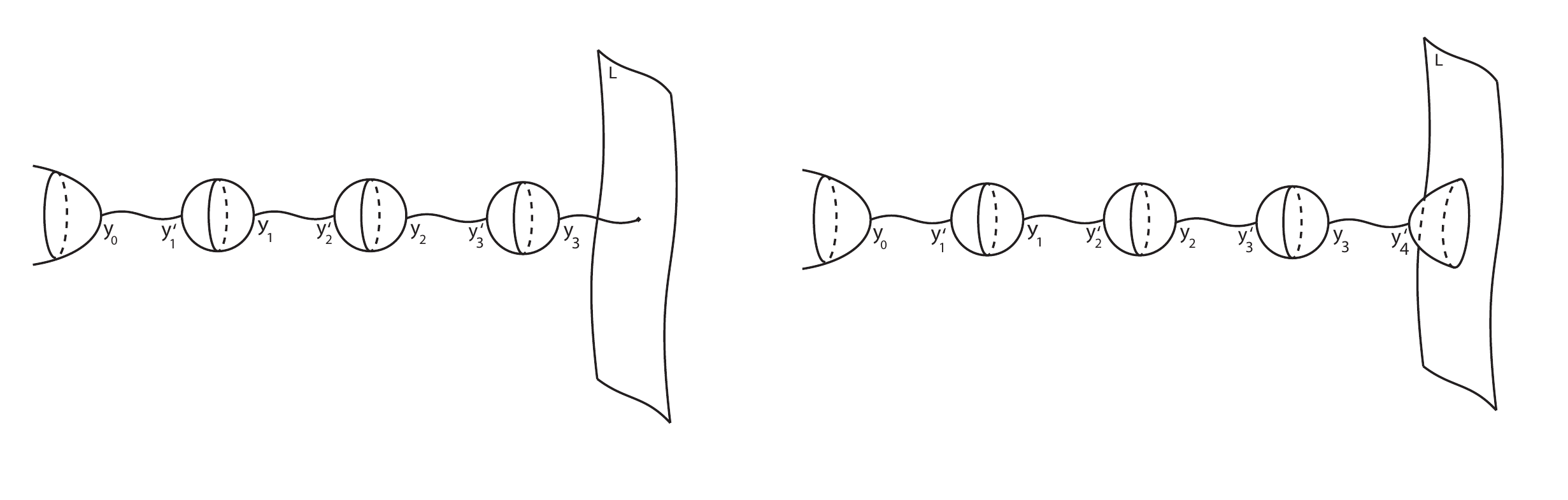}
\caption{ Chains of gradient segments and connecting bubbles at a type-2 boundary node in two cases respectively}
\label{figure2}
\end{figure}

\item {\bf Matching conditions at interior tree nodes.}

Let $w\in C$ be an interior tree node and let $y, y'\in \hat{C}$ be its preimages in the normalization. Let $(P_y, A_y, \phi_y)$ and $(P_{y'}, A_{y'}, \phi_{y'})$ be the limit triples, and let $g_w: P_y\to P_{y'}$ be the isomorphism of bundles given by the gluing data at $w$. Denote by $\Phi_y: P_y\to X$ and $\Phi_y': P_y'\to X$ be the corresponding equivariant maps respectively, then we have $\Phi_y=\Phi_y'\circ g_w$. This implies that the limit orbits of $\varphi$ at $y$ and $y'$ are the same.

\item {\bf Matching conditions at boundary nodes of type 1}

Let $w' \in C$ be a boundary node of type 1 with $y, y'$ the preimages in the normalization. Let $g_{w'}: P_y\to P_{y'}$ be isomorphism of fibres given by the gluing data at $w'$. Let $\Phi_y: P_y\to L$ and $\Phi_y': P_{y'}\to L$ be the corresponding equivariant maps. Then $\Phi_y=\Phi_{y'}\circ g_{w'} $.

\item {\bf Stability condition.}

If a bubble component $D\subset C$ is unstable, then the restriction of $d_A \varphi$ to $D$ is not identically zero.

\end{enumerate}

\begin{rem} The last stability condition in the definition doesn't automatically guarantee that the automorphism group of a $(c, L)$-STHM is finite. We have to put a restriction on the pair $(X, L)$ and the value $c$ to have that. However, this is not related to the compactness issue.
\end{rem}

\subsection{Isomorphisms of $(c, L)$-STHM's}

Take two $(c, L)$-STHM's $${\mc C}_i=\left( ( C_i, {\bf x}_i\cup {\bf z}_i), (P_i, A_i, \varphi_i), G_i, \{{\mc T}_{w, i}\}\right), i=1, 2.$$ We say that they are {\bf isomorphic}, if there is an isomorphism $f: (C_1, {\bf x}_1\cup {\bf z}_1)\to (C_2, {\bf x}_2\cup {\bf z}_2)$ of prestable curves and an isomorphism $g: P_1\to f^*P_2$ of $S^1$-principal bundles satisfying 
\begin{enumerate}

\item $g^*f^*A_2=A_1$, $g^*f^*\varphi_2=\varphi_1$ in the usual sense;

\item (see the definition of isomorphisms in \cite{Mundet_Tian_Draft})for each interior node $w_1\in {\bf w}^{int}\subset C_1$, let $w_2= f(w_1)$. Then we can lift $f$ to the normalization 
$$\hat{f}: \hat{C}_1\to \hat{C}_2.$$
Choose preimages $y_1, y_1'$ of $w_1$ and $y_2, y_2'$ of $w_2$ in their normalizations such that $\hat{f}(y_1)=y_2$, $\hat{f}(y_1')=y_2'$. Let $(P_{y_j}, A_{y_j})$(resp. $(P_{y_j'}, A_{y_j'})$) be the limit pairs of $(P_j, A_j)$ at $y_j$(resp. $y_j'$). Let $g_w: P_{y_1}\to P_{y_2}$ and $g_{w}': P_{y_1'}\to P_{y_2'}$ be the maps induced by $g$. The gluing data of ${\mc C}_j$ gives $\gamma_j: P_{y_j}\to P_{y_j'}$. 

$\bullet$ If the limit holonomy of $A_j$ at $y_j$ is {\it generic}, then we require that the following diagram commutes
$$\begin{CD}
P_{y_1} @>g_w>> P_{y_2}\\
@V\gamma_1VV  @VV \gamma_2V\\
P_{y_1'} @> g_w'>> P_{y_2'}.
\end{CD}$$

$\bullet$ If the holonomy of $A_j$ at $y_j$ is {\it critical}, then we require that there exists $t\in {\mb R}$ such that the following diagram commutes
$$
\begin{CD}
P_{y_1} @>g_w>> P_{y_2}\\
@V\gamma_1VV  @V V \gamma_2\circ \eta_{y_2}(t) V\\
P_{y_1'} @> g_w'>> P_{y_2'}.
\end{CD}$$
Here $P_{y_2}\simeq S_{y_2}\times S^1$ and $\eta_{y_2}: P_{y_2}\times {\mb R}\to P_{y_2}$ is the flow given by the limit connection on $P_{y_2}$.

\item For each boundary node of type 1 $w_1\in {\bf w}^{b1}$, let $w_2= f(w_1)$. We require that there exists $\gamma\in S^1$ in the stablizer of $\varphi_2(y_2)$ such that
$$\begin{CD}
P_{y_1} @>g_w>> P_{y_2}\\
@V\gamma_1VV  @V V \gamma_2\circ \gamma V\\
P_{y_1'} @> g_w'>> P_{y_2'}
\end{CD}
$$
commutes.

\item Using the above notations, for each pair of chains of gradient segments ${\mc T}_{y_1}$, ${\mc T}_{y_2}$, we require that
$${\mc T}_{y_1}= {\mc T}_{y_2}\circ g_w. $$

\end{enumerate}

\begin{rem}
As Ignasi Mundet told the author, this definition seems unnatural in the first glance, because the relaxation from the natural but more rigid definition allows us to have nice structure of moduli space.
\end{rem}

\subsection{Yang-Mills-Higgs functionals}

We give the definition of the Yang-Mills-Higgs functional for a $(c, L)$-STHM, which plays same role as energy in the theory of pseudoholomorphic curves.

\begin{defn}
Let ${\mc C}$ be a $(c, L)$-STHM. Its Yang-Mills-Higgs functional is defined to be
$$\mc{YMH}_c({\mc C}) :=\sum_{D\subset C} \left\| d_A\varphi\right\|_{L^2(D)}^2 + \sum_{D\subset C_p} \left( \left\|F_A \right\|_{L^2(D)}^2+ \left\| \mu(\varphi)-c\right\|_{L^2(D)}^2 \right),$$
where the first summation is taken over all components of $C$ while the second is taken over all principal components.
\end{defn}

\section{Bounding the number of bubbles}

In this subsection, we will prove the following theorem.

\begin{thm}\label{bubblebound}
Given $(g, h, n, \overrightarrow{m} )$ in the stable domain and $K>0$, there is a number $f(g, h, n, \overrightarrow{m}, K)$ such that for any $c$-stable twisted holomorphic maps with boundary ${\mc C}$ with the underlying curve $\left( C^{st}, {\bf x}^{st}\cup {\bf z}^{st} \right) \in \ov{\mc M}_{(g, h), (n, \overrightarrow{m})}$ and $\mc{YMH}_c({\mc C}) \leq K$, the number of bubbles of $C$ is no greater than $f(g, h, n, \overrightarrow{m}, K)$.
\end{thm}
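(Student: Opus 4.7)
The plan is to partition the bubble components of $C$ into ghost and non-ghost components and bound each class separately. As a preliminary observation, the stabilization $(C^{st},{\bf x}^{st}\cup{\bf z}^{st})$ lies in the compact moduli $\ov{\mc M}_{(g,h),(n,\ora m)}$, so its dual graph has combinatorial complexity bounded by some constant $N_0 = N_0(g,h,n,\ora m)$; in particular the number of principal components of $C$, the total number of special points on them, and the number of ``entry points'' at which bubble subtrees attach to the principal part are all $\leq N_0$.

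Call a bubble $D\subset C$ a \emph{ghost} if $d_A\varphi\equiv 0$ on $D$, and \emph{non-ghost} otherwise. The STHM definition forces $A$ to be flat on every bubble, so gauge-trivializing identifies $(A,\varphi)|_D$ with an honest $I$-holomorphic map $S^2\to X$ or $({\mb D},\partial{\mb D})\to (X,L)$. In the non-ghost case this map is non-constant, hence by the defining property of $\epsilon_X$ one has $\|d_A\varphi\|_{L^2(D)}^2\geq \epsilon_X$, and so $\mc{YMH}_c({\mc C})\leq K$ gives $B_n\leq K/\epsilon_X$ for the number $B_n$ of non-ghost bubbles.

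For the ghost bubbles I would invoke the stability condition in the STHM definition: any bubble that is unstable as a marked curve has $d_A\varphi\not\equiv 0$, so every ghost bubble is stable as a curve and therefore carries at least $3$ special points if it is a sphere and at least $2$ if it is a disk. Decompose the bubble locus of $C$ into its maximal connected subtrees $T$ of the dual graph; writing $g_T$, $n_T$, $r_T$, $m_T$ for the numbers of ghost bubbles, non-ghost bubbles, edges attaching $T$ to the non-bubble part, and marked points on $T$ respectively, the incidence count on the ghost vertices of $T$ gives the lower bound $2 g_T$ and the upper bound $2(|T|-1)+r_T+m_T$, whence $g_T \leq n_T+r_T+m_T+O(1)$. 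Summing over all such subtrees and using $\sum_T(r_T+m_T)\leq N_0+n+|\ora m|$ together with $\sum_T n_T\leq K/\epsilon_X$ yields a uniform bound on $B_g$, and hence on $B_g+B_n$, of the desired form.

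The main obstacle is making the combinatorial count uniform across the three flavors of boundary node (interior, boundary of type 1, boundary of type 2 with or without a terminating disk tail, in the sense of Subsection \ref{section65}), since disk bubbles have a weaker stability threshold than sphere bubbles and a type-2 boundary chain terminated by a disk requires separate treatment of the terminal disk. I would address this by weighting sphere and disk components of a ghost subtree with coefficients $3$ and $2$ respectively in the incidence count; the resulting inequality is slightly weaker but of the same shape, and the rest of the argument then goes through verbatim.
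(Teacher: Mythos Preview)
Your argument has a genuine gap at the very first step, and it is precisely the difficulty that makes this theorem nontrivial. You assert that since $A$ is flat on a bubble $D$, one can gauge-trivialize and identify $(A,\varphi)|_D$ with an honest $I$-holomorphic sphere or disk, whence nontriviality forces energy $\geq\epsilon_X$. But the bundle $P$ is only defined over $D$ minus its exceptional points, and a flat connection on a multiply-punctured sphere or on a disk with an interior puncture can have nontrivial holonomy around those punctures; it is \emph{not} globally gauge-trivial. The section $\varphi$ is then a genuinely twisted holomorphic map, not an ordinary pseudoholomorphic curve, and the paper states explicitly (just before Subsection~\ref{canonicaltransport}) that such twisted bubbles can have arbitrarily small positive energy. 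Concretely, Theorem~\ref{thm9.2} Case~B shows that when the residue $\lambda$ lies in $\Lambda_{cr}^{\epsilon,\pm}$ the bubble is essentially a reparametrized gradient segment of $h$, which is nontrivial yet can carry as little energy as one likes. So your bound $B_n\leq K/\epsilon_X$ is false.

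The paper's proof replaces your ghost/non-ghost dichotomy by a tree/connecting dichotomy and controls each piece with the structure theorems \ref{thm9.2} and \ref{thm9.3}. For sphere \emph{tree} bubbles it first proves (Lemma after Theorem~\ref{thm9.3}) that the residues at their nodes lie in the discrete set $\Lambda_{cr}^\sigma$, and then chooses $\epsilon$ so that $\Lambda_{cr}^\sigma\subset\Lambda_{cr}\cup\Lambda_{gen}^\epsilon$; by Theorem~\ref{thm9.2} Cases~A and~B(3) a nontrivial twisted bubble with such a residue has energy $\geq\delta$, and a short valence count on each tree then bounds their total number. For \emph{connecting} sphere bubbles (where the residue can lie in $\Lambda_{cr}^{\epsilon,\pm}$ and the energy can be tiny) the paper invokes the monotonicity in Theorem~\ref{thm9.2} B(1)(2): each small-energy connecting bubble strictly decreases (or increases) the value of $h$ between its two limit components of the fixed-point set, so the length of a connecting chain is bounded by the number of critical values of $h$. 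The disk bubbles are handled analogously via Theorem~\ref{thm9.3}. Your combinatorial bound on ghost bubbles is fine once the non-ghost bound is in place, but the non-ghost bound requires these residue/monotonicity arguments, not just $\epsilon_X$.
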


In the case of pseudoholomorphic curves, the bounding of bubbles is relatively easy because the energy of nontrivial bubbles has a positive lower bound. But in the case of twisted holomorphic curves, we can have bubbles of arbitrarily small energy. Then to find a bound, we have to consider such bubbles of small energy.

\subsection{Canonical transports}\label{canonicaltransport}

\subsubsection{Canonical transports on twisted spheres}

Consider the points $x_0=[1: 0]$ and $x_1=[0:1]$ in the complex projective line ${\mb P}^1$, define the path $\gamma: [0,1]\to {\mb P}^1$ as $\gamma(t)=[1-t: t]$ and let $u_0=\gamma'(0)\in T_{x_0}{\mb P}^1$. Let $\rho: {\mb P}^1\to {\mb P}^1$ be the map defined by $\rho([x: y])=[\ov{y}: {\ov x}]$ and let $u_1=d\rho(u_0)$.

Actually, if we identify ${\mb P}^1$ with the standard 2-sphere, with $x_0$ the south pole and $x_1$ the north pole. Then $\gamma$ is a geodesic connecting $x_0$ and $x_1$ and $u_0$ is the unit tangent vector of this geodesic at $x_0$, $u_1$ is the unit tangent vector of this geodesic at $x_1$.

Suppose $B$ is a compact smooth curve of genus 0 and ${\bf y}=\{y_0, y_1\}\subset B$ be two different points. Given a nonzero tangent vector $v_0\in T_{y_0}B $, by the uniformization theorem, there exists a unique biholomorphism $f: {\mb P}^1\to B$ such that $f(x_i)=y_i$ and $df(u_0)=v_0$. We set $g_B' (v_0)=df(u_1)\in T_{y_1} B$. Then this defines a canonical map $g_B': T_{y_0}B\setminus \{0\}\to T_{y_1} B\setminus \{0\}$ and induces an isomorphism of $S^1$-torsors: $f_{y_0, y_1}: S_{y_0}\to \ov{S_{y_1}}$.

Now let $P$ be an principal $S^1$-bundle over $B\setminus{\bf y}$ and $A$ be a flat connection on $P$. Then the $A$-parallel transport along the curve $f\circ \gamma$ defines an isomorphism between the fibre of $P_{y_0}$ over the class of $v_0$ in $S_{y_0}$ and the fibre of $P_{y_1}$ over the class of $v_1$ in $S_{y_1}$. In this way we obtain an isomorhism of $S^1$ principal bundles $$g_{y_0, y_1}: P_{y_0}\to P_{y_1}$$ which lifts the map $f_{y_0, y_1}$.

Given a point $z\in B\setminus {\bf y}$ we choose a biholomorphism $f: {\mb P}^1\to B$ satisfying $f(x_i)=y_i$ and $z$ belongs to the image of $f\circ \gamma$. Then we set $f_{y_j}(z)=[df(u_j)]$. This gives a well-defined map $f_{y_j}: B\setminus {\bf y}\to S_{y_j}$ for $j=0, 1$. Finally, using the parallel transport along the curve $f\circ \gamma$ we obtain morphisms of principal bundles $g_{y_j}: P\to P_{y_j}$, $j=0, 1$, lifting the maps $f_{y_j}$.

We call the maps $g_{y_j}$ and $g_{y_0, y_1}$ the {\bf canonical transport } of $P$.

\subsubsection{Canonical transports on twisted disks}

Consider the unit disk ${\mb D}$ and $0\in {\mb D}$ the center and $1\in \partial {\mb D}$. Set $\gamma: [0,1]\to {\mb D}$ to be $\gamma(t)=t$ and $\gamma'(0)=u_0$. Then for any disk $D$ with an interior point $z_0$, for any $z\in D\setminus \{z_0\}$, there is a unique biholomorphism $f: {\mb D}\to D$ such that $f(0)=z_0$ and $z$ belongs to the image of $f\circ \gamma$. Set $g_{z_0}: D\setminus \{z_0\}\to S_{z_0}$ by $g_{z_0}(z)= [df(u_0)]$. If we have a principal $S^1$-bundle $P$ over $D\setminus \{z_0\}$ with a flat connection $A$ on $P$, then the parallel transport along $f\circ \gamma$ gives a morphism \begin{align}\label{canonicaltransport}
g_{z_0}: P\to P_{z_0}.
\end{align}
We call the map $g_{z_0}$ the {\bf canonical transport} of $P$. In particular, if with respect to some trivialization $A= d+ \lambda d\theta$. Then the trivialization gives $P\simeq {\mb D}^* \times S^1\simeq [0, \infty)\times S^1\times S^1$ and $P_{z_0}\simeq S^1\times S^1$. Then $g_{z_0}$ is just the projection $(t, \theta_1, \theta_2)\mapsto (\theta_1, \theta_2)$.

\subsection{Twisted bubbles}

Let ${\bf y}\subset S^2$ be a finite subset. We call a {\bf twisted sphere bubble} over $(S^2, {\bf y})$ a triple $(P, A, \varphi)$ consisting a principal $S^1$-bundle $P\to S^2\setminus {\bf y}$, a flat meromorphic connection $A$ on $P$ and a section $\varphi: S^2\setminus {\bf y}\to P\times_{S^1} X$ satisfying $\ov\partial_A \varphi=0$. Similarly, let ${\bf y}\subset D$ be a finite subset in the interior of the unit disk. We call a {\bf twisted disk bubble} over $(D, {\bf y})$ a triple $(P, A, \varphi)$ consisting a principal $S^1$-bundle $P\to D\setminus {\bf y}$, a flat meromorphic connection $A$ on $P$ and a section $\varphi$ of $P\times_{S^1} X$ satisfying $\varphi(\partial D)\subset P\times_{S^1} L$ and $\ov\partial_A\varphi=0$. A twisted sphere (or disk) bubble $(P, A, \varphi)$ is {\bf trivial} if the covariant derivative $d_A \varphi$ is identically zero.

For any $\epsilon>0$ smaller than half the minimal distance $\delta_{cr}$ between two different elements in $\Lambda_{cr}$, define the sets $\Lambda^{\epsilon,+}_{cr}$ and $\Lambda^{\epsilon, -}_{cr}$ as
$$\Lambda_{cr}^{\epsilon,\pm}=\Lambda_{cr}\pm (0, \epsilon)=\left\{ \lambda_{cr}\pm \delta| \lambda_{cr}\in \Lambda_{cr}, i\delta\in (0, \epsilon) \right\},$$
and let $\Lambda^\epsilon_{gen}=i{\mb R}\setminus \left( \Lambda^{\epsilon,+}_{cr}\cup \Lambda^{\epsilon,-}_{cr} \right) $.

The following is \cite[Theorem 6.2]{Mundet_Tian_2009}.
\begin{thm} \label{thm9.2}
Let $y_+, y_-\in S^2$ be two different points and let ${\bf y}=\{y_+, y_-\}$. Let $\epsilon$ be smaller than the $\epsilon_2$ given by Theorem \ref{thm24} and ${1\over 2} \delta_{cr}$. There exists some $\delta>0$ with the following property. Suppose that $(P, A, \varphi)$ is a twisted bubble on $(S^2, {\bf y})$ satisfying $\left\| d_A\varphi \right\|_{L^2}<\delta$. Take some trivialization of $P$. Then $\varphi$ corresponds to a map $\phi: S^2\setminus {\bf y}\to X$. Let $\lambda$ be the corresponding residue of $A$ at $y_+$.

{\bf Case A.} If $\lambda\in \Lambda_{cr}$ then $(P, A, \phi)$ is trivial;

{\bf Case B.} Otherwise, by Theorem 4.2 the following limits exist
$$\phi(y_\pm):=\lim_{z\to y_\pm} \phi(z)\in F.$$
Let $F_\pm$ be the connected components of $F$ containing $\phi(y_\pm)$. Then we have
\begin{enumerate}
\item if $\lambda\in \Lambda_{cr}^{\epsilon,+}$ then $h(F_+)\leq h(F_0)$ with equality only if $(P, A, \varphi)$ is trivial;

\item if $\lambda\in \Lambda_{cr}^{\epsilon,-}$ then $h(F_+)\geq h(F_0)$ with equality only if $(P, A, \varphi)$ is trivial;

\item if $\lambda\in \Lambda^\epsilon_{gen}$ then $(P, A, \varphi)$ is trivial.
\end{enumerate}
Let $(P_{\pm}, A_{\pm})$ be the limit pairs of $(P, A)$ at $y_\pm$. When Case B holds, there are covariantly constant maps ${\mc T}_{\pm}: P_\pm\to {\mc T}^\infty(X)$ such that for any $p\in P_{\pm}$ we have
$$
{\mc T}_\pm(p)=\left\{ \phi(y_+), \phi(y_-)    \right\}\cup \left\{ \phi(p)\left| q\in P, g_{y_\pm} (q)=p \right.     \right\}.$$
\end{thm}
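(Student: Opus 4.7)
The plan is to identify $S^2 \setminus \{y_+, y_-\}$ conformally with the infinite cylinder ${\mb R} \times S^1$ and apply the long-cylinder Theorems \ref{thm22}, \ref{thm24}, \ref{thm25} on an exhausting family of subcylinders $Z_N = [-N, N] \times S^1$ centered at any prescribed point. A preliminary step is to upgrade the global energy hypothesis $\|d_A\varphi\|_{L^2} < \delta$ to a pointwise bound $\|d_A\varphi\|_{L^\infty} < \epsilon_0$, where $\epsilon_0$ is smaller than both $\epsilon_1$ and $\epsilon_2$ provided by Theorems \ref{thm22} and \ref{thm24}; this follows by choosing $\delta$ small enough and invoking a mean-value / $\epsilon$-regularity estimate for $\bar\partial_A$-holomorphic sections (available because $A$ is flat on the bubble). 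For each $N$ we then put $A$ in balanced temporal gauge on a slightly larger cylinder; flatness forces $d\alpha = 0$, so $\alpha = \lambda_0\, d\theta$ is constant with $|\lambda_0 - \lambda| < \epsilon_0$, and all the hypotheses of the cylinder theorems hold.

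\emph{Case A} ($\lambda \in \Lambda_{cr}$). Choose the trivialization so that $\lambda_0 = \lambda$. Theorem \ref{thm24}(2) produces maps $\psi_N : [-N,N] \to X^\lambda$ and $\phi_0^N$ with $|\phi_0^N(t,\theta)| \leq K_2 e^{\sigma_2(|t|-N)}$, and Theorem \ref{thm25}(2) yields $|\psi_N'(t)| \leq K_2 e^{\sigma_2(|t|-N)}$ since the drift term $\lambda_0 - \lambda$ vanishes. Letting $N \to \infty$ with center chosen arbitrarily in ${\mb R}$ forces $\psi_\infty$ constant and $\phi_0^\infty \equiv 0$, so $\varphi$ is covariantly constant; together with flatness, the bubble is trivial.

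\emph{Case B} ($\lambda \notin \Lambda_{cr}$). If $\lambda \in \Lambda_{gen}^\epsilon$, Theorem \ref{thm22} delivers $|d_\alpha \phi(t,\theta)| \leq K_1 e^{\sigma_1(|t|-N)} \|d_\alpha \phi\|_{L^\infty(Z_N)}$ on every $Z_N$; the infinite-cylinder limit forces $d_\alpha\phi \equiv 0$, giving statement (3). Otherwise, let $\lambda_{cr}$ be the unique critical residue within distance $\epsilon$ of $\lambda$. Applying Theorems \ref{thm24}(2) and \ref{thm25}(2) relative to $\lambda_{cr}$, we obtain $\psi_N : [-N,N] \to X^{\lambda_{cr}}$ with
\begin{equation*}
|\psi_N'(t) + i(\lambda_0 - \lambda_{cr}) \nabla h(\psi_N(t))| \leq K_2 e^{\sigma_2(|t|-N)}, \qquad |\phi_0^N(t,\theta)| \leq K_2 e^{\sigma_2(|t|-N)}.
\end{equation*}
Setting $l_N = |i(\lambda_0 - \lambda_{cr})|$ and reparametrizing $s \mapsto \psi_N(s/l_N)$, the hypotheses of Theorem \ref{thm26} are met, so a subsequence converges in Hausdorff distance to a monotone chain of gradient segments ${\mc T} \in {\mc T}^\infty(X)$ whose beginning and end lie in $F$. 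By Theorem \ref{thm23} these endpoints coincide with $\phi(y_\pm)$, placing $\phi(y_\pm)$ in components $F_\pm \subset F$. The sign of $i(\lambda - \lambda_{cr})$ dictates whether ${\mc T}$ follows $-\nabla h$ or $+\nabla h$, yielding the stated inequalities between $h(F_+)$ and $h(F_0)$ in cases $\Lambda_{cr}^{\epsilon,+}$ and $\Lambda_{cr}^{\epsilon,-}$ respectively. The equivariant maps ${\mc T}_\pm : P_\pm \to {\mc T}^\infty(X)$ are then defined by assigning to each $p \in P_\pm$ the closure of the $\phi$-values along the $A$-horizontal ray in $P$ covering $p$, together with $\phi(y_\pm)$; this is exactly the limit chain, and covariant constancy is automatic because the construction intertwines the $S^1$ action with $A$-parallel transport.

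The main obstacle will be the rigidity statements at equality in Case B. Concluding that $h(F_+) = h(F_0)$ forces triviality requires strengthening the Hausdorff convergence of Theorem \ref{thm26} into a statement that a degenerate limit chain makes the whole family $\psi_N$, centered at every shift in ${\mb R}$, collapse uniformly to a constant; this must then be combined with the exponential decay of $\phi_0^N$ to propagate $d_A\varphi \equiv 0$ across the entire bubble. Keeping the reparametrization by $l_N$ compatible with these decay rates, and ensuring that passing to subsequential limits at $\pm\infty$ does not discard part of the chain, is the technical heart of the argument.
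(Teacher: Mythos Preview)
The paper does not prove this theorem here (it is quoted from \cite{Mundet_Tian_2009}), but the proof of the disk analogue Theorem~\ref{thm9.3} immediately afterwards is explicitly described as ``similar'', so I compare against that argument.

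Your overall strategy---identify $S^2\setminus\{y_+,y_-\}$ with the bi-infinite cylinder, put $A$ in balanced temporal gauge so that flatness forces $\alpha=\lambda_0\,d\theta$ exactly, upgrade the $L^2$ bound to an $L^\infty$ bound by $\epsilon$-regularity, and feed everything into the long-cylinder Theorems~\ref{thm22}, \ref{thm24}, \ref{thm25}---matches the paper's. In Case~A you argue via Theorems~\ref{thm24}/\ref{thm25} with $\lambda_0=\lambda$; the paper instead takes a $q$-fold branched cover (where $\lambda=ip/q$) to kill the holonomy, extends across the puncture, and invokes the standard energy gap for genuine $I$-holomorphic spheres. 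Both routes are valid; yours is arguably more uniform since it avoids the dependence of $q$ on $\lambda$.

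There is, however, a genuine misstep in your Case~B near a critical residue. You invoke Theorem~\ref{thm26}, but its hypothesis $l_u\to 0$ is not satisfied: your $l_N=|i(\lambda_0-\lambda_{cr})|$ is a fixed nonzero constant independent of $N$. More to the point, Theorem~\ref{thm26} is unnecessary here. The crucial observation, used without comment in the paper's proof of Theorem~\ref{thm9.3}, is that $\psi(t)$ and $\phi_0(t,\cdot)$ are determined \emph{locally in $t$} by $\phi(t,\cdot)$ through the decomposition $\phi(t,\theta)=e^{-\lambda_{cr}\theta}\exp_{\psi(t)}(e^{\lambda_{cr}\theta}\phi_0(t,\theta))$ together with the vanishing-average condition; they are therefore independent of $N$, and your subscripts $\psi_N$, $\phi_0^N$ are superfluous. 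Fixing $t$ and letting $N\to\infty$ in the bounds of Theorems~\ref{thm24}(2) and~\ref{thm25}(2) then yields $\phi_0\equiv 0$ and $\psi'(t)=-i(\lambda_0-\lambda_{cr})\nabla h(\psi(t))$ \emph{exactly} on all of~${\mb R}$. So $\psi$ is a single reparametrized gradient trajectory of $h$ inside $X^{\lambda_{cr}}$ joining $\phi(y_-)$ to $\phi(y_+)$; strict monotonicity of $h$ along nonconstant gradient lines gives the inequalities, and equality forces $\psi$ constant in $F$, whence $\phi$ is covariantly constant. This completely dissolves the ``main obstacle'' you anticipated: there is no Hausdorff-limit subsequence to control, and the rigidity at equality drops out immediately.
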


We have an analogue of this theorem for twisted disk bubbles.
\begin{thm}\label{thm9.3}
Let $z_0\in D$ be an interior point. Let $\epsilon$ be smaller than the $\epsilon_4$ given in Theroem \ref{thm4.1} and ${1\over 2}\delta_{cr}$. Then there exists $\delta>0$ with the following property. Suppose that $(P, A, \varphi)$ is a twisted disk bubble on $(D, z_0)$ satisfying $\left\| d_A\varphi \right\|_{L^2}<\delta$. Take some local trivialization of $P$ on $D\setminus \{z_0\}$. Then $\varphi$ corresponds to some map $\phi: D\setminus \{z_0\}\to X$. Let $\lambda$ be the corresponding residue of $A$ at $z_0$.

{\bf Case A.} If $\lambda\in \Lambda_{cr}$ then $(P, A, \varphi)$ is trivial;

{\bf Case B.} Otherwise, by Theorem 4.2 in \cite{Mundet_Tian_2009}, the following exists
$$\phi(z_0)=\lim_{z\to z_0} \phi(z)\in F_k.$$ Then we have
\begin{enumerate}
\item if $\lambda\in \Lambda^{\epsilon, +}_{cr}$, then $h(F_k)\leq h(L)$, with equality only if $(P, A, \varphi)$ is trivial;

\item if $\lambda\in \Lambda^{\epsilon, -}_{cr}$, then $h(F_k)\geq h(L)$, with equality only if $(P, A, \varphi)$ is trivial;

\item if $\lambda\in \Lambda^\epsilon_{gen}$, then $(P, A, \varphi)$ is trivial.
\end{enumerate}
Let $(P_{z_0}, A_{z_0})$ be the limit pair of $(P, A)$ at $z_0$. When case B holds, there is a covariantly constant flat map ${\mc T}_{z_0}: P_{z_0}\to {\mc T}^\infty(X)$ such that for any $p\in P_{z_0}$, we have
$${\mc T}_{z_0}(p)=\left\{ \phi(z_0)  \right\}\cup \left\{   \phi(q)| q\in P,\ g_{z_0}(q)=p  \right\},
$$
where $g_{z_0}: P\to P_{z_0}$ is the canonical transport (\ref{canonicaltransport}).
\end{thm}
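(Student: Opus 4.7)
The plan is to mirror the proof of Theorem \ref{thm9.2} in the bordered setting, using the half-cylinder results of Section 4 in place of their full-cylinder counterparts and treating the Lagrangian boundary $\phi(\partial D) \subset L$ as playing the role of one of the two marked points on the sphere. First I identify $D \setminus \{z_0\}$ conformally with the half-cylinder $[0, \infty) \times S^1$ via $z = \exp(-t+i\theta)$, so that $\{t = 0\}$ corresponds to $\partial D$ and $t \to \infty$ to $z_0$. Small-energy $\epsilon$-regularity for twisted holomorphic pairs allows me to shrink $\delta$ so that $\|\alpha - \lambda d\theta\|_{L^\infty}$ and $\|d_\alpha \phi\|_{L^\infty}$ fall below $\epsilon_4$ on every truncation $C_N^+ = [0, N+2) \times S^1$. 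Since $A$ is flat, balanced temporal gauge on each $C_N^+$ automatically satisfies $\lambda_0 = \lambda$, and I apply Theorems \ref{thm4.1}(2) and \ref{thm4.2}(2) to extract a decomposition $\phi(t, \theta) = e^{-\lambda\theta}\exp_{\psi(t)}(e^{\lambda\theta}\phi_0(t, \theta))$ on every $Z_N^+$, with exponential bounds $|\phi_0(t, \theta)|, |\psi'(t)| \leq K_4 e^{\sigma_4(t-N)}$. Uniqueness in the implicit function theorem glues these $N$-dependent objects into global maps $\psi, \phi_0$ on $[0, \infty) \times S^1$.

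In Case A ($\lambda \in \Lambda_{cr}$) the estimates $|\phi_0(t, \theta)|, |\psi'(t)| \leq K_4 e^{\sigma_4(t - N)}$ hold for every $N > t$; sending $N \to \infty$ forces $\phi_0 \equiv 0$ and $\psi' \equiv 0$. Hence $\phi(t, \theta) = e^{-\lambda\theta}\psi_0$ for a constant $\psi_0 \in L \cap X^\lambda$, and a direct computation gives $d_A\varphi = 0$ pointwise, so the bubble is trivial.

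In Case B ($\lambda$ noncritical, close to some $\lambda_{cr}$), Theorem \ref{thm23} provides $\phi(z_0) := \lim_{t \to \infty} \phi(t, \theta) \in F$ in some component $F_k$, and $\psi(t) \to \phi(z_0)$. The boundary condition $\phi(0, \theta) \in L$ combined with the decomposition forces $\psi(0)$ to lie within $O(\epsilon)$ of the $S^1$-orbit of $L$, so $h(\psi(0)) = h(L) + O(\epsilon)$ since $h$ is $S^1$-invariant and $h|_L \equiv h(L)$. Theorem \ref{thm4.2}(2) gives $\psi'(t) \approx -i(\lambda - \lambda_{cr})\nabla h(\psi(t))$, whence $\frac{d}{dt}h(\psi(t)) = -i(\lambda - \lambda_{cr})|\nabla h(\psi(t))|^2$ up to terms that are $L^1$ in $t$. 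Integrating from $0$ to $\infty$ yields the sign-sensitive inequalities in (1) and (2), with equality forcing $\nabla h(\psi(t)) \equiv 0$ and, combined with the Case A rigidity, triviality. For $\lambda \in \Lambda_{gen}^\epsilon$, the half-cylinder analogue of the noncritical decay (Theorem \ref{thm40}) forces $d_A\varphi$ to decay exponentially on all of $[0, \infty) \times S^1$, which together with the finite-energy hypothesis yields $d_A\varphi \equiv 0$ and triviality outright.

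For the remaining assertion, set ${\mc T}_{z_0}(p) := \{\phi(z_0)\} \cup \{\phi(q) : q \in P, g_{z_0}(q) = p\}$. Under the canonical transport $g_{z_0}$, the preimage of $p$ is an $A$-horizontal lift of a radial ray from $z_0$, which in the half-cylinder picture is a fixed-$\theta$ ray, and its $\phi$-image is controlled by the $\psi$-curve up to exponentially decaying $\phi_0$-corrections. Theorem \ref{thm4.2}(2) identifies this image with an approximate gradient segment of $h$ starting at $\phi(z_0) \in F$, and covariant constancy of ${\mc T}_{z_0}$ is immediate from the $S^1$-equivariance of $g_{z_0}$. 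The main obstacle I anticipate is the rigidity in the equality cases of (1) and (2): upgrading the $O(\epsilon)$-approximate monotonicity of $h(\psi(t))$ to the statement that equality implies triviality. This I handle by noting that when $h(F_k) = h(L)$ the integrated bound forces $\nabla h(\psi) \equiv 0$, so $\psi$ stays in $F$, and the $N \to \infty$ argument of Case A then kills $\phi_0$, trivializing the bubble.
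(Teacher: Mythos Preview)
Your overall strategy matches the paper's: pass to the half-cylinder model, put $A$ in balanced temporal gauge (so $\alpha=\lambda\,d\theta$ exactly, since $A$ is flat), invoke $\epsilon$-regularity to get $\|d_\alpha\phi\|_{L^\infty}<\epsilon_4$, and then feed this into Theorems \ref{thm4.1} and \ref{thm4.2}. Your treatment of Case~A is actually \emph{different} from the paper's and is a nice simplification: the paper handles $\lambda\in\Lambda_{cr}$ by taking a $q$-fold branched cover of the disk (where $\lambda=ip/q$), trivializing the pulled-back flat bundle, and invoking the energy quantum $\epsilon_X$ for ordinary $I$-holomorphic disks with boundary in $L$; you instead apply Theorems \ref{thm4.1}(2) and \ref{thm4.2}(2) with $\lambda_0=\lambda$ and let $N\to\infty$. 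Both are valid; yours avoids the covering detour, while the paper's route has the advantage of making explicit that the relevant $\delta$ in Case~A is just the disk energy quantum.

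The gap is in Case~B, parts (1) and (2). You carry out the $N\to\infty$ limit cleanly in Case~A but then abandon it in Case~B, working instead with ``$\psi(0)$ within $O(\epsilon)$ of $L$'' and ``$\psi'\approx -i(\lambda-\lambda_{cr})\nabla h(\psi)$ up to $L^1$ errors.'' This is both unnecessary and insufficient. In Case~B the decomposition from Theorem~\ref{thm4.1} uses the nearby \emph{critical} value $\lambda_{cr}$ (not $\lambda$), and the very same $N\to\infty$ argument you used in Case~A applies verbatim: for each fixed $(t,\theta)$ the bounds $|\phi_0(t,\theta)|\leq K_4 e^{\sigma_4(t-N)}$ and $|\psi'(t)+i(\lambda-\lambda_{cr})\nabla h(\psi(t))|\leq K_4 e^{\sigma_4(t-N)}$ force, as $N\to\infty$, the exact identities $\phi_0\equiv 0$ and $\psi'(t)=-i(\lambda-\lambda_{cr})\nabla h(\psi(t))$. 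Hence $\phi(t,\theta)=e^{-\lambda_{cr}\theta}\psi(t)$ exactly; the boundary condition $\phi(0,\theta)\in L$ together with $S^1$-invariance of $L$ gives $\psi(0)\in L$ \emph{exactly}, and $\psi$ is an honest (reparametrized) downward gradient line of $h$ from $L$ to $F_k$. The inequality $h(F_k)\leq h(L)$ and the equality-implies-triviality statement then follow immediately from standard gradient-flow monotonicity. Your approximate version does not: with a fixed $\epsilon$ you only get $h(F_k)\leq h(L)+O(\epsilon)$, and in the equality case ``integrated bound forces $\nabla h(\psi)\equiv 0$'' does not follow from an $O(\epsilon)$ inequality. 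Push the limit through as you did in Case~A and the argument closes with no residual error terms; this is exactly what the paper does.
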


\begin{proof} The proof is similar to that of Theorem \ref{thm9.2} given in \cite{Mundet_Tian_2009}. Indeed, suppose first that $\lambda=ip/q\in \Lambda_{cr}$ for relatively prime integers $p, q$. Take a covering $\pi: D\to D$ of degree $q$ totally ramified at $z_0$. Then $\pi^*A$ has trivial holonomy around $z_0$. Since $\pi^*A$ is flat, we can extend $\pi^*P$ and $\pi^*A$ to a bundle $P'\to D$ with a flat connection $A'$. We can take a trivialization $P\simeq D\times S^1$ with respect to which $A'$ is trivial. Consider the induced trivialization of $\pi^*P$, we can view the section $\pi^*\phi$ as an $I$-holomorphic map $\phi: D\setminus \{z_0\}\to X$ of bounded energy. By the removal of singularity theorem, $\phi$ extends to a holomorphic disk $\widetilde{\phi}: D\to X$ satisfying $\left\|d\widetilde\phi \right\|_{L^2}=q\left \|d_A\varphi\right\|_{L^2}$. This is a standard result in Gromov-Witten theory that there is a lower bound of energy of nontrivial holomorphic disks, which depends only on $\omega$ and $I$. Since the critical residues contains finite many classes modulo integers, we can take $\delta$ small enough so that the lemma holds for all critical $\lambda$.

Now consider the case that $\lambda$ is not critical. Fix a conformal isomorphism between $D-\{z_0\}$ and the semi-infinite cylinder $C^+=[0, +\infty)\times S^1$, and take the latter the standard coordinate $(t, \theta)$ in such a way that as $t$ goes to $+\infty$, $(t, \theta)$ approaches to the point $z_0$. Then we can identify $S_{z_0}\simeq S^1$ in such a way that the map $f_{z_0}\to S_{z_0}$ is the projection $(t, \theta)\mapsto \theta$.

Fix a triple $(P, A, \varphi)$ and take a trivialization $P\times C^+\times S^1$ with respect to which $d_A=d+\lambda d\theta$, since $A$ is flat.

Suppose $\lambda\in \Lambda_{cr}^{\epsilon, +}$, so there exists $\lambda_{cr}\in \Lambda_{cr}$ such that $i(\lambda-\lambda_{cr})\in(0, \epsilon)$. By a standard argument in Gromov-Witten theory(see \cite[Proposition B.4.9]{McDuff_Salamon_2004}), there exists $\delta>0$ such that $\left\| d_\alpha\phi \right\|_{L^2}<\delta$ implies that $\left\| d_\alpha\phi \right\|_{L^\infty}<\epsilon$. Then we can apply Theorem \ref{thm4.1} and Theorem \ref{thm4.2} to the pair $(\alpha, \phi)$. Hence there exists $\psi: {\mb R}_{\geq 0}\to X^{\lambda_{cr}}$ and $\phi_0: {\mb R}_{\geq 0}\times S^1\to TX$ such that
\begin{align*}
\phi(t, \theta)=e^{-\lambda_{cr}\theta}\exp_{\psi(t)}\left( e^{\lambda_{cr}\theta}\phi_0(t, \theta) \right).
\end{align*}
Take some $t\in {\mb R}_{\geq 0}$ and $N>t$. Apply Theorem \ref{thm4.1}, \ref{thm4.2} to $[0, N]\times S^1\subset C^+$ together with $d\alpha=0$ implies that
\begin{align*}
\left| \phi_0(t, \theta) \right| \leq K_4 e^{\sigma_4 (t-N)}, \ \left|\psi'(t)+i(\lambda-\lambda_{cr})\nabla h(\psi(t)) \right|\leq K_4 e^{\sigma_4 (t-N)}.
\end{align*}
Making $N\to \infty$, we deduce that $\phi_0(t, \theta)=0$, and $\psi'(t)=-i(\lambda-\lambda_{cr})\nabla h(\psi(t))$. Since $\phi(\{0\}\times S^1)\subset L$, we have $\psi(0)\in L$ and $\psi(t)$ is a downward gradient flow line of $h=- i\mu$. So we have $h(F_k)\leq h(L)$ and equality holds only if $\phi$ is trivial. The case when $\lambda\in \Lambda_{cr}^{\epsilon, -}$ is treated the same way.

Finally suppose $\lambda\in \Lambda_{gen}^\epsilon$. By Theorem \ref{thm40}, there exists real numbers $K', \sigma', \epsilon'$ such that its statements hold for any $\lambda\in \Lambda_{gen}^\epsilon$(since we have the continuous dependence of the constants on $\lambda$). Then we have the estimate
$$\left| d_\alpha \phi\right| \leq K_3 e^{\sigma_3(t-N)}\left\| d_\alpha \phi\right\|_{L^\infty}.$$
Let $N$ go to infinity, we see that the pair $(\alpha, \phi)$ is in fact trivial. 

The map ${\mc T}_{z_0}: P_{z_0}\to {\mc T}^\infty(X)$ is given by the trajectory $\psi(t)$ i.e. ${\mc T}_{z_0}(\theta_1, \theta_2)= e^{-\lambda_{cr} \theta_2} \psi(t)$. Here $\theta_1$ is the coordinate on the base $S_{z_0}$ and $\theta_2$ is the coordinate on the fibre of $P_{z_0}$. It is easy to check that $\psi$ coincides with the one given in the statement of the theorem. In particular, $\psi(0)\in L$.
\end{proof}

\subsection{Proof of Theorem \ref{bubblebound}}

Let $\Gamma$ be the dual graph of $C$, so the set of vertices of $\Gamma$ is the set of irreducible components of $C$, and for any pair of vertices $v', v''$ corresponding to components $C', C''$(it is possible that $v'=v''$), there are as many edges connecting $v'$ and $v''$ as there are many nodes in $C$ at which $C'$ and $C''$ meet. All subgraphs $\Gamma'\subset \Gamma$ which we shall mention will be saturated, that is, if $v'$ and $v''$ are vertices of $\Gamma'$, then all edges in $\Gamma$ connecting $v'$ and $v''$ are also contained in $\Gamma'$. We call a vertex of $\Gamma$ a tree (resp. connecting) vertex if it corresponds to a tree (resp. connecting) bubble.

Denote by $\Lambda_{cr}^\sigma$ the set of sums of finitely many elements in $\Lambda_{cr}$. This is a discrete subgroup of ${\mb R}$. Then we can take $\epsilon$ small such that $\Lambda_{cr}^\sigma\subset \Lambda_{cr}\cup \Lambda_{gen}^\epsilon$.

Each sphere(resp. disk) tree vertex of $\Gamma$ belongs to a maximal connected subgraph $T\subset \Gamma$ consisting of entirely of sphere(resp. disk) tree vertices, and $T$ is a tree. We denote the collection of all trees of sphere tree vertices $T_1^s,\ldots, T_p^s$ and the collection of all trees of disk tree vertices $T_1^d, \ldots, T_q^d$. Each tree has a unique vertex, called the {\bf root}, which is connected by an edge to the complement of this tree.

We slightly generalize \cite[Lemma 6.3]{Mundet_Tian_2009} as follows.

\begin{lemma} Let $T$ be one of the trees $T_1^s, \ldots, T_p^s$.
\begin{enumerate}
\item For any sphere bubble $B\subset C$ corresponding to a vertex in some $T_i^s$, the residue of $A$ at any node in $B$ belongs to $\Lambda_{cr}^\sigma$.

\item If moreover, the root of $T_s^i$ is connected by an edge to a connecting vertex or a disk tree vertex, then the limit holonomy of $A$ at any node in $B$ is trivial.
\end{enumerate}
\end{lemma}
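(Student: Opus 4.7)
The plan is to prove both parts by induction on the tree $T$, working from its leaves toward the root and propagating residue constraints through two elementary identities: (i) for each sphere bubble $B$ in $T$ the connection $A$ is flat, so applying Stokes' theorem to $F_A=0$ in a global trivialization of $P$ over $B$ minus its punctures (nodes together with interior marked points) yields $\sum_{i}\lambda_{i}=0$, in particular $\sum_{i}\lambda_{i}\in i\mb{Z}$; and (ii) at each interior node $w$ with preimages $y,y'$, the meromorphic condition ${\rm Hol}(A,y)={\rm Hol}(A,y')^{-1}$ gives $\lambda_{y}+\lambda_{y'}\in i\mb{Z}$. Because $i\mb{Z}\subset\Lambda_{cr}\subset\Lambda_{cr}^{\sigma}$ and both $i\mb{Z}$ and $\Lambda_{cr}^{\sigma}$ are additive subgroups of $i\mb{R}$, membership in either of these two groups is preserved by both identities.

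For part (1) I would induct on the distance from the root in $T$. The structural assumption that only the root of $T$ is joined to the complement of $T$ implies that a leaf $v$ of $T$ corresponds to a sphere bubble with exactly one node, namely the edge to its parent, together with possibly some interior marked points, whose residues lie in $\Lambda_{cr}$ by the critical-holonomy clause in the STHM definition; the Stokes identity then places the parent-node residue in $\Lambda_{cr}^{\sigma}$. At a non-leaf vertex $v$, the residues at the child-nodes are in $\Lambda_{cr}^{\sigma}$ on the child-side by induction, hence also on the $B_{v}$-side by the matching identity; combined with marked-point contributions in $\Lambda_{cr}$, Stokes places the remaining residue, either at the parent-node or at the unique outgoing node if $v$ is the root, in $\Lambda_{cr}^{\sigma}$.

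For part (2) the same induction runs with $i\mb{Z}$ in place of $\Lambda_{cr}^{\sigma}$, once the residue at the root's outgoing node is known to be trivial; propagation is then unobstructed because the attachment of $T$ to a connecting or disk tree vertex places $T$ at a location of $C^{st}$ that excludes interior marked points on the $T$-bubbles, so no critical-but-nontrivial contributions appear in the Stokes sums. The principal obstacle is therefore the base case: establishing that the outgoing residue of the root is in $i\mb{Z}$ under the hypothesis of (2). For a disk tree neighbor I would exploit the flatness of $A$ on the disk bubble together with the Lagrangian boundary condition, using the canonical transport of Subsection \ref{canonicaltransport} to identify the holonomy around the interior puncture with the parallel transport along a loop close to $\partial D$, and then using that the image of $\partial D$ under $\varphi$ lies in $L$ together with the $S^{1}$-structure of $P|_{\partial D}$ to force triviality. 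For a connecting neighbor I would use the covariant-constancy identity $\mc{T}_{y}(\theta)=e^{\lambda\theta}\mc{T}_{y}(0)$, which places the chain of gradient segments in $X^{\lambda}$, together with the matching-of-limit-orbits condition at interior tree nodes, to force the residue on the sphere-tree side of the attachment node to be trivial.
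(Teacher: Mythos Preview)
Your argument for part (1) is correct and is the natural induction; this matches the approach in \cite[Lemma 6.3]{Mundet_Tian_2009}, to which the paper simply defers.

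For part (2) you have the right key observation --- under the hypothesis, no sphere bubble in $T_i^s$ can carry an interior marked point, since the entire tree $T_i^s$ is contracted by $\pi^{st}$ to the same point as its neighbor: a node of $C^{st}$ if the neighbor is a connecting bubble, or a boundary point if the neighbor is a disk tree bubble (because the disk's boundary must map into $\partial C^{st}$). Either way this point is not an interior marked point of $C^{st}$, so the bijection ${\bf z}\to{\bf z}^{st}$ forces ${\bf z}\cap T_i^s=\emptyset$.

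However, once this is established there is no ``principal obstacle'' left. The \emph{same} leaves-to-root induction as in (1), now with $i\mb{Z}$ in place of $\Lambda_{cr}^\sigma$, finishes immediately: a leaf of $T_i^s$ is a sphere with a single node and no marked points, so flatness forces that residue into $i\mb{Z}$; this propagates upward, and at the root the outgoing residue is \emph{derived} from the child residues via Stokes, not assumed. Your separate ``base case'' arguments at the root are therefore unnecessary. They are also problematic on their own terms: the attachment node between the root of $T_i^s$ and a connecting bubble is a \emph{tree} node (it is not one of the $y_j,y_j'$ in the ordered list), so no chain-of-gradient-segments data ${\mc T}_y$ is attached there and the covariant-constancy identity you invoke does not apply; and for the disk-tree case the Lagrangian boundary condition on $\varphi$ does not by itself constrain the holonomy of $A$ around $\partial D$.
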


\begin{proof}
The same as the proof of \cite[Lemma 6.3]{Mundet_Tian_2009}
\end{proof}

\begin{proof}[Proof of Theorem \ref{bubblebound}] We first show that, the total number of sphere tree vertices is bounded. Let $\left\{T_i^s \right\}_{i=1}^m$ be the set of trees in $\Gamma$ consisting of sphere bubbles. We say that a vertex $v\in T_i^s$ is {\bf stable} if its degree {\it as a vertex of $\Gamma$} ${\rm deg}_\Gamma (v)\geq 3$. Since $T_i^s$ is a tree, it has $|T_i^s|-1$ edges and has a unique edge connecting to its complement. Hence we have
$$\sum_{v\in T_i^s} {\rm deg}_\Gamma (v)= 1+ \sum_{v\in T_i^s} {\rm deg}_{T_i^s}(v)=1+ 2\left( |T_i^s|-1\right).$$
Hence
$$2|T_i^s|= \sum_{v\in T_i^s} {\rm deg}_\Gamma(v)+1\geq 3 \# \{v\in T_i^s\ {\rm stable}\}\Rightarrow \#\{v\in T_i^s\ {\rm unstable}\} \leq {1\over 3}|T_i^s|.$$
Summing up for $i=1, \ldots, m$, one obtains that 
$$\sum_{i=1}^m \#\{v\in T_i^s\ {\rm unstable}\}\leq {1\over 3}\sum_{i=1}^m |T_i^s|.$$
Among those unstable vertices, there are at most $|{\bf x}|$ contains interior marked points, while the other vertices really correspond to the unstable bubbles in the usual sense. The energy of each of those ``real'' unstable vertices is at least $\delta$, due to the lemma. Hence we have
$$\delta\left( {1\over 3} \sum_{i=1}^m |T_i^s|-|{\bf x}|\right) \leq K.$$
This gives a upper bounded of the number of sphere tree vertices, which is denoted by $N_1$ temporarily.

The total number of connecting sphere bubbles is also bounded, according to the proof of \cite[Theorem 6.1]{Mundet_Tian_2009}. We denote this number by $N_2$. Note that this step uses the monotonicity property of Theorem \ref{thm9.2}.

Now we bounded the number of disk bubbles. Let $T_j^d\subset \Gamma$ be a tree of disk tree bubbles. Then each vertex $v\in T_j^d$ has no interior marked point, or it will be stable. Also, if $v$ has an interior node $w$, then $v$ is the root of some $T_i^s$, so the residue at $w$ belongs to $\Lambda_{cr}^\sigma$. But $\Lambda_{cr}^\sigma\subset \Lambda_{cr}\cup \Lambda_{gen}^\epsilon$. Hence by the above theorem, each vertex of $T_j^d$ contributes at least $\delta$ to the energy. Hence the total number of disk tree vertices is bounded.

The only remaining bubbles are the disk connecting bubbles, each of which belongs to two cases, which are discribed in Section \ref{section65}. One is a single connecting bubble with an interior node, but there is at most one such bubble for each boundary node of type 2 of the stablization; the other kind is a chain of connecting bubbles, but it is easy to see that, the holonomy at each interior exceptional point belonging to this chain is trivial, hence each bubble in the sequence contributes at least $\delta$ to the energy. This gives an upper bounde to the total number of bubbles of this kind.
\end{proof}

\section{Definition of the convergence}

\subsection{Deformation of prestable curves}

Let $(C, {\bf x}\cup {\bf z})$ be a prestable curve and $h$ be a conformal metric. Let $\hat\pi: \hat{C} \to C$ be the normalization map and let ${\bf y}=\hat\pi^{-1}({\bf w})=\hat\pi^{-1}({\bf w}^{int})\cup \hat\pi^{-1}({\bf w}^{b1})\cup \hat\pi^{-1}({\bf w}^{b2})=: {\bf y}^{int}\cup {\bf y}^{b1}\cup {\bf y}^{b2}$. We have an anti-holomorphic involution $\tau$ on the complex double $C_{\mb C}$, which induces, for each point $y\in \partial \hat{C}$, an anti-holomorphic involution $\tau: T_y \hat{C}\to T_y \hat{C}$. Denote by $\left( T_y \hat{C}\right)^{\mb R}$ to be the subspace of all $\tau$-invariant vectors.

For each $y\in {\bf y}^{int}$, take a neighborhood of $U_y\subset \hat{C}$ such that there exists a biholomorphism $\xi_y: D(\epsilon)\to U_y$ with the disk $D(\epsilon)\subset T_y\hat{C}$ centered at $0$ of radius $\epsilon$; for each $y\in {\bf y}^{b1}$, take a neighborhood $U_y$ small enough such that there exists a biholomorphism $\xi_y: D(\epsilon)/\tau\to U_y$, where $D(\epsilon)\subset T_y \hat{C} $ and $\tau$ the involution on $T_y \hat{C}$; for each $y\in {\bf y}$ with $\hat\pi(y)$ a boundary node of type 2, take a neighborhood $U_y$ small enough such that there exists a biholomorphism $\xi_y: D(\epsilon)\to U_y$ where $D(\epsilon)\subset T_y \hat{C}$. For each $x\in {\bf x}$ and $z\in {\bf z}$, also choose a small neighborhood $B_x\subset \hat{C}$ of $\hat\pi^{-1}(x)$ and $B_z\subset \hat{C} $ of $\hat\pi^{-1}(z)$. Assume that these neighborhoods are pairwise disjoint.

Denote by $J$ the complex structure of $\hat{C}$ and $J'$ be another complex structure which coincides with $J$ on each $\bigcup \left( U_y\cup B_x\cup B_z\right)$. Such complex structures are called $(C, {\bf x}\cup {\bf z})$-admissible. Take, for each $w\in {\bf w}^{int}$ with preimages $y, y'\in \hat{C}$, an element $\delta_w\in T_y \hat{C} \otimes_{\mb C} T_{y'} \hat{C}$ with $|\delta_z|<\epsilon^2$; for each $w' \in {\bf w}^{b1}$ of type 1 with preimages $y, y'\in \hat{C} $, an element $\delta_{w'}\in (T_y \hat{C} \otimes_{\mb C} T_{y'} \hat{C} )^{\mb R}$ with $|\delta_{w'}|<\epsilon^2$; for each $w''\in {\bf w}^{b2}$, with preimage $y\in \hat{C} $, a real number $\delta_{w''}$ with $0\leq \delta_{w''}<\epsilon$. The collection of numbers ${\bf \delta}=\{\delta_w\}$ is called a set of {\bf smoothing parameters} of $C$.

Now from the original curve with an admissible complex structure $J'$ and a set of smoothing parameters $\delta$, we can construct a new curve $C(J', h, \delta)$ as follows: 
\begin{enumerate}
\item First replace the complex structure $J$ by $J'$ on the normalization $\hat{C}$;

\item For each interior node ${\bf w}^{int}$ with $\hat\pi^{-1}(w)=\{y, y'\}$, remove the sets $\xi_y\left( D({|\delta_w|\over \epsilon}) \right)$ and $\xi_{y'}\left( D({|\delta_w|\over \epsilon}) \right)$ from $\hat{C}$, and for each pair of elements $u\in D(\epsilon)\setminus D({|\delta_w| \over \epsilon})\subset T_y \hat{C} $ and $v\in D(\epsilon)\setminus D(|\delta_w|/\epsilon)\subset T_{y'} \hat{C}$ with $u\otimes v=\delta_w$, identify their images $\xi_y(u)$ and $\xi_{y'} (v)$; 

\item For each boundary node $w'\in {\bf w}^{b1}$ with $\hat\pi^{-1}(w')=\{y, y'\}$, remove the sets $\xi_y\left(D({|\delta_{w'}|\over \epsilon})/\tau \right)$ and $\xi_{y'}\left( D({|\delta_{w'} |\over \epsilon})/\tau \right)$ from $\hat{C} $, and for each pair of elements $[u]\in \left(D(\epsilon)\setminus D({|\delta_{w'}|\over \epsilon}) \right)/\tau\subset T_y \hat{C} /\tau$ and $[v]\in \left(D(\epsilon)\setminus D({|\delta_{w'} |\over \epsilon} ) \right)/\tau\subset T_{y'} \hat{C} /\tau$ identify the images $\xi_y([u])$ and $\xi_{y'}([v])$ if there are representatives $u\in T_y \hat{C} $ and $v\in T_{y'} \hat{C} $ such that $u\otimes v=\delta_{w'}$;

\item For each $w''\in {\bf w}^{b2}$, remove the open disk $\xi_{w''}(D(\delta_{w''}))$.
\end{enumerate}

Then for every node $w\in {\bf w}$, define the ``neck'' $N_w(\delta_w)\subset C(J', \delta)$ to be the following subset in different cases: 
\begin{enumerate}

\item if $w\in {\bf w}^{int}$, then $N_w(\delta_w)$ is the image by $\xi_y$ of the annulus $D(\epsilon)\setminus D\left( {|\delta_w|\over \epsilon}\right)$. $N_w(\delta_w)$ is conformal to the cylinder
$$N_w(\delta_w)\simeq [\log |\delta_w|-\log \epsilon, \log\epsilon] \times S^1.$$

\item if $w'\in {\bf w}^{b1}$, then $N_{w'}(\delta_{w'})$ is the image by $\xi_y$ of $\left(D(\epsilon)\setminus D({ |\delta_{w'}|\over \epsilon}) \right)/\tau$. $N_{w'}(\delta_{w'})$ is conformal to the strip
$$N_{w'}(\delta_{w'})\simeq [\log| \delta_{w'}| -\log\epsilon, \log\epsilon] \times [0, 1].$$

\item if $w''\in {\bf w}^{b2}$, then $N_{w''}(\delta_{w''})$ is the annulus $\xi_{w''}(D(\epsilon) \setminus D(\delta_{w''}))$, which is conformal to
$$N_{w''} (\delta_{w''})\simeq [ \log| \delta_{w''}| , \log \epsilon]\times S^1.$$

\end{enumerate}

A set of smoothing parameters $\{\delta_w\}$ is {\it small enough} if each $|\delta_w|$ is small enough. Given a compact subset $K\subset C-{\bf w}$, if the smoothing parameters $\{\delta_w\}$ is small enough so that $\xi_y(D(|\delta_w|/\epsilon))$ and $\xi_{y'}(D(|\delta_w|/\epsilon))$ are disjoint from $K$, then there is a canonical inclusion $\iota_K: K\to C(J', \delta)$. In particular, there is a canonical inclusion of the marked points ${\bf x}\cup {\bf z}$ inside any small deformation $C(J', \delta)$.

\subsection{Definition of the convergence}\label{defnofconvergence}

Let the following be a sequence of $(c, L)$-stable twisted holomorphic maps
$$\left\{{\mc C}_u=\left((C_u, {\bf x}_u\cup {\bf z}_u), (P_u, A_u, \phi_u), G_u, \{{\mc T}_{u, w}\}\right)\right\}_{u\in {\mb N}}.$$
Let ${\mc C}=\left((C, {\bf x}\cup {\bf z}), (P, A, \phi), G, \{{\mc T}_w\}\right)$ be a $(c, L)$-stable twisted holomorphic map. We say that the isomorphism classes $[{\mc C}_u]$ converge to $[{\mc C}]$, if one can write ${\mb N}=U_1\cup \cdots U_n$ in such a way that for each $j$, the sequence $\{[C_{u'}]\}_{u'\in U_j}$ converges strictly to $[{\mc C}]$. While a sequence $\{[{\mc C}_u]\}$ {\bf converges strictly} to $[{\mc C}]$ means the following:

$\bullet$ There is a $(C, {\bf x}\cup {\bf z})$-admissible complex structure $J_u$, smoothing parameters $\delta_u=\{\delta_{u, w}\}$ and a continuous map
$$\xi_u: C_u\to C(J_u, \delta_u)=C(J_u, h, \delta_u)$$
which can be written as $\xi_u=\xi_u'\circ \pi_u^{co}$ where $\pi_u^{co}: C_u\to C_u^0$ is a collapsing map which collapses some of the unstable connecting bubbles and $\xi_u': C_u^0\to C(J_u, \delta_u)$ is a biholomorphism of nodal curves.

$\bullet$ We assume that for each node $w\in C$, either all $\delta_{u, w}$ vanish or none of them vanishes(this is why we call this notion {\it strict} convergence). In the first case we say that $w$ is an {\bf old node} and in the second case we say that $w$ is a {\bf new node}.

$\bullet$ We require that each each old interior node $w$ of $C$ and old boundary node $w''$ of type 2 of $C$, there are index sets $B_w$ and $B_{w''}$ {\it indepdent} of $u$, such that there are collections of unstable connecting bubbles $\{O_{u, b, w}\}_{b\in B_w}$ and $\{O_{u, b, w''}\}_{b\in B_{w''}}$ which are those bubbles that $\pi_u^{co}$ collapses. Also, $\pi_u^{co}$ collapses a disk at $w''$ for all $u$, or doesn't collapse a disk at $w''$ for any $u$.

$\bullet$ Given the previous data, the pair $(P_u, A_u)$ on $C_u$ naturally induces a pair, which we still denote by $(P_u, A_u)$, on the curve $C_u^0$. For any exhaustion $K_1\subset \cdots\subset K_l\subset\cdots$ of the smooth locus of $C\setminus {\bf z}$ by compact subsets, any choice of conformal metric $h$ on $C$, the following convergence properties are satisfied as $u\to \infty$. (For each $K_l$ in this sequence, the set $\iota_u(K_l)\subset C_u^0$ are identified for $u$ sufficiently large.)
\begin{enumerate}
\item {\bf Convergence of the marked curves.} The admissible complex structure $J_u$ converges to $J$ in the $C^\infty$-topology on each compact subset $K_l\subset C$; for each node $w$, $\lim_{u\to\infty} \delta_{u, w}=0$; the sequence of marked point sets ${\bf x}_u\cup {\bf z}_u$ converges to ${\bf x}\cup {\bf z}$.
	
\item {\bf Convergence of the connections and sections away from nodes and interior marked points.} For each $K_l$ and any big enough $u$(so that the inclusion $\iota_{K_l}: K_l\to C(J_u, \delta_u)$ is defined) there exists an isomorphism of principal bundles
\begin{align*}
\rho_{u, l}: P|_{K_l}\to \iota_{K_l}^*P_u
\end{align*}
such that $\rho_{u, l}^* \iota_{K_l}^*A_u$ converges to $A$ and $\rho_{u, l}^*\iota_{K_l}^*\phi_u$ converges to $\phi$ on $K_l$ in $C^\infty$. This implies that the limit holonomy of $A_u$ around each interior marked point converges to that of $A$. We can assume that if $l<l'$, then
\begin{align*}
\rho_{u, l}=\rho_{u, l'}|_{K_l}
\end{align*}
and we will write $\rho_u$ instead of $\rho_{u, l}$ in the following. We can also assume that for an old boundary node $w$ of type 1, with preimages $y, y'$ in $\hat{C}$, the isomorphisms $\rho_{u, l}$ can be extended over $y$ and $y'$, for any $u$.

\item {\bf Convergence near the interior marked points.} For each interior marked point $z_u\in {\bf z}_u$ with $\lim z_u\to z\in {\bf z}$, take $D(z_u, \epsilon)^*\subset C_u$ the punctured disk centered at $z_u$ of radius $\epsilon$ and $D(z, \epsilon)^*$, identify biholomorphically $D(z_u, \epsilon)^*\simeq {\mb R}_{\geq 0}\times S^1\simeq D(z, \epsilon)^*$. For each $\tau>0$ define $N^\tau_z=[\tau, \infty)\times S^1$, then we have
$$\lim_{\tau\to \infty}\left(\limsup_{u\to \infty} {\rm diam}_{S^1}\phi_u(N_z^\tau)\right)=0.$$

\item {\bf Convergence near the new interior nodes.} Take some new interior node $w\in {\bf w}^{int}$, define for each $\tau>0$ and big enough $u$ the truncation of the neck $N_w(\delta_{u, w})$
$$N_u^\tau(w)=\left[ \ln|\delta_{u, w}|-\ln\epsilon+\tau, \ln\epsilon-\tau \right]\times S^1\subset N_w(\delta_{u, w})= \left[ \ln|\delta_{u, w}|-\ln\epsilon, \ln\epsilon \right]\times S^1.$$

-{\it Limit behavior of the gluing data.} Let $y, y'\in \hat{C}$ be the preimages of $z$ in the normalization of $C$ and for any small enough $\epsilon$, let $S_{y, \epsilon}, S_{y', \epsilon}\subset \hat{C} $ be the circles of radius $\epsilon$ centered at $y, y'$, respectively. Let $P_{y, \epsilon}$(resp. $P_{y', \epsilon}$) be the restriction of $P$ to $S_{y, \epsilon}$(resp. $S_{y', \epsilon}$), and let $\iota_{\epsilon}: P_y\to P_{y, \epsilon}$ and $\iota_{\epsilon}': P_{y'}\to P_{y', \epsilon}$ be the isomorphisms given by radial parallel transport. Take on $S_{y, \epsilon}$ the structure of $S^1$-torsor making $\exp_{y}^{-1}: S_{y, \epsilon}\to S_y$ an isomorphism of torsors and do the same for $S_{y', \epsilon}$. We can assume that for big enough $u$ there are $\tau_{u, \epsilon}, \tau'_{u, \epsilon}\in {\mb R}$ and identifications $S_{y, \epsilon}\simeq \{\tau_{u, \epsilon}\}\times S^1\subset N_u$ and $S_{y', \epsilon}\simeq \{\tau'_{u, \epsilon}\}\times S^1\subset N_u$, the first being an isomorphism of $S^1$-torsors and the second an antiisomorphism. Using these identifications we define $P_{u, y, \epsilon}$ as the restriction of $P_u$ to $S_{y, \epsilon}$ and $P_{u, y', \epsilon}$ as the restriction of $P_u$ to $S_{y', \epsilon}$. Parallel transport along lines $[\tau_{u, \epsilon}, \tau'_{u, \epsilon}]\times \{\theta\}\subset N_u$ using the connection $A_u$ gives an isomorphism of bundles $g_{u, z ,\epsilon}: P_{u, y, \epsilon}\to P_{u, y', \epsilon}$. Hence we have a diagram which may NOT be commutative:
$$ \begin{CD}
P_y @> \iota_{\epsilon}>> P_{y,\epsilon} @> \rho_u >> P_{u, y, \epsilon} \\
@V g_z VV @. @VV g_{u, z, \epsilon}V\\
P_{y'} @> \iota_\epsilon' >> P_{y', \epsilon} @> \rho_u >> P_{u, y', \epsilon},
\end{CD} $$
where $g_z$ is the gluing data at $z$ given by $G$, and we require that the diagram becomes commutative in the limit, in the following sense. Define $\gamma_{u, z, \epsilon}: P_y\to P_{y'}$ as the composition $(\iota_\epsilon')^{-1}\circ \rho_u^{-1} \circ g_{u, z, \epsilon}\circ \rho_u \circ \iota_\epsilon$ and taking any distance function $d$ on ${\rm Iso}(P_y, P_{y'})$. Then there exists some $t\in {\mb R}$ such that 
$$\lim_{\epsilon\to 0}\left(\limsup_{u\to\infty} d\left( g_z\circ \gamma(t), \gamma_{u, z, \epsilon} \right)\right)=0.$$
Recall that $\gamma: P_y\times {\mb R}\to P_y$ is the flow given by the limit connection on $P_y$\footnote{Actually in proving the compactness, we can construct the limit curve such that the above holds for $t=0$, i.e. $\gamma(0)={\rm id}$.}.

-{\it Limit behavior of the section.} Any trivialization of $P_u$ over $N_u$ induces a trivialization of $P_{u, y, \epsilon}$, and composing with $\rho_u\circ \iota_\epsilon$ we obtain a trivialization $P_y\simeq S_y\times S^1\simeq S^1\times S^1$. Thus the limit section $\phi_y: P_y\to X$(resp. $\phi_{y'}: P_{y'}\to X$) becomes a map $\varphi_y: S^1\to X$(resp. $\varphi_{y'}: S^1\to X$). Suppose the neck is $N_u=[p_u, q_u]\times S^1$ and we assume that $y$ is on the side of $p_u$. Then we require that for any $\theta\in S^1$,
$$\lim_{\tau\to\infty} \left(\limsup_{u\to\infty} \left(d(\phi_u(\inf S_{u^\tau}, \theta), \varphi_y(\theta))+d(\phi_u(\sup S_{u, \tau}, \theta), \varphi_{y'} (\theta))\right)\right)=0.$$
Moreover, if $z$ is a connecting node, and then we have a map ${\mc T}_y: P_y\to {\mc T}(X)$, which, under the above trivialization, becomes a map $T: S_1\to {\mc T}(X)$. Recall that $N_u^\tau$ is the truncation. Then we require that
$$\lim_{\tau\to\infty}\left(\limsup_{u\to\infty} d_{{\rm Hauss}}(\phi_u(S_u^\tau\times \{\theta\}), T(\theta))\right)=0.$$
On the other hand, if $z$ is a tree node, then we require that
$$\lim_{\tau\to\infty} \left(\limsup_{u\to\infty} {\rm diam}_{S^1} (\phi_u(N_u^\tau(w)))\right)=0.$$

-{\it Limit behavior of the energy density.} We require that
$$\lim_{\tau\to\infty} \limsup_{u\to\infty} \left\| d_{A_u}\phi_u \right\|_{L^\infty(N_u^\tau(w))}=0.$$

\item {\bf Convergence near the new boundary nodes of type 1.} Take such a node $w'\in C$, and define for each $\tau$ the truncation for big enough $u$
$$N_u^\tau(w')=\left[ \ln \left({|\delta_{u, w'}| \over \epsilon}\right)+\tau, \ln\epsilon-\tau \right]\times [0, \pi]\subset N_{w'}(\delta_{u, w'})=\left[ \ln\left( { |\delta_{u, w'}| \over \epsilon}\right), \ln\epsilon \right]\times [0, \pi].$$

-{\it Limit behavior of the gluing data.} Let $y, y'$ be the preimages of $w'$ in the normalization of $C$ and for any small enough $\epsilon$, let $C_{y, \epsilon}, C_{y', \epsilon}$ be the semi-circles of radius $\epsilon$ centered at $y, y'$ respectively. Let $P_{y, \epsilon}$(resp. $P_{y', \epsilon}$) be the restriction of $P$ to $C_{y, \epsilon}$(resp. $C_{y', \epsilon}$) and let $\iota_\epsilon: P_y\times [0, \pi]\to P_{y, \epsilon}$ and $\iota_\epsilon': P_{y'}\times [0, \pi]\to P_{y', \epsilon}$ be the isomorphisms given by radial parallel transport. We can assume that for big enough $u$ there are $\tau_{u, \epsilon}, \tau_{u, \epsilon}'\in {\mb R}$ and identifications $C_{y, \epsilon}\simeq \{\tau_{u, \epsilon}\}\times [0, \pi]\subset N_u$ and $C_{y', \epsilon}\simeq \{\tau_{u, \epsilon}'\}\times [0, \pi]$, such that
$$\begin{CD}[0, \pi] @> \exp_y >> C_{y, \epsilon} \to \{\tau_{u, \epsilon}\}\times [0, \pi]  @> p_2 >> [0, \pi] \end{CD}$$
is the identity map and the corresponding map for $y'$ coincides with $r: \theta\mapsto \pi-\theta$. We define $P_{u, y, \epsilon}$ as the restriction of $P_u$ to $C_{y, \epsilon}$ and define similarly $P_{u, y', \epsilon}$. Parallel transport along lines $[\tau_{u, \epsilon}, \tau'_{u, \epsilon}]\times [0, \pi]\subset N_u$ using the connection $A_u$ gives an isomorphism of bundles $g_{u, w', \epsilon}: P_{u, y, \epsilon}\to P_{u, y', \epsilon}$. Hence we have a diagram which may {\it not} be commutative:
$$\begin{CD}
P_y\times [0, \pi] @> \iota_\epsilon>> P_{y, \epsilon} @> \rho_u>> P_{u, y, \epsilon}\\
@V g_{w'}\times r VV @. @VV g_{u, w', \epsilon}V\\
P_{y'}\times [0, \pi] @> \iota_\epsilon' >> P_{y', \epsilon} @> \rho_u >> P_{u, y', \epsilon},
\end{CD}$$
where $g_{w'}$ is the gluing data at $w$ given by $G$. Define $\gamma_{u, w', \epsilon}(\theta): P_y\to P_{y'}$ to be the restriction of $(\iota_\epsilon')^{-1}\circ \rho_u^{-1}\circ g_{u, w', \epsilon}\circ \rho_u\circ \iota_\epsilon$ to $P_y\times \{\theta\}$ and take any distance function $d$ on ${\rm Iso}(P_y, P_{y'})$, we required that
$$\lim_{\epsilon\to 0} \left(\limsup_{u\to\infty} d(g_{w'}\circ \gamma, \gamma_{u, {w'}, \epsilon}(\theta))\right)=0$$
for all $\theta\in [0, \pi]$ and some $\gamma$ in the stablizer of $\varphi(w')\in L$.

-{\it Limit behavior of the section.} We require that
\begin{align}\label{914}
\lim_{\tau\to \infty}\left(\limsup_{u\to \infty} {\rm diam}_{S^1} \phi_u\left( N_u^{\tau}(w') \right)\right)=0.
\end{align}

\item {\bf Convergence near the new boundary nodes of type 2.}
If $w''$ is a boundary node of type 2 in $C$, then $N_{w''}(\delta_{u,w''})$ is conformal to the long cylinder $N_u=[0, 2\ln \epsilon-\ln |\delta_{w''} |]\times S^1=[0, r_u]\times S^1$. Then trivialize $P_u$ over $N_u$ with respect to which $A_u$ is in balance temporal gauge, and the pair $(A_u, \varphi_u)$ can be viewed as a pair $(\alpha_u, \phi_u)$ with $\alpha_u$ a 1-form and $\phi_u: N_u\to X$. The following must hold:

-{\it Limit behavior of the connection.} Choose appropriately the balanced temporal gauge for all $u$, the restriction of $\alpha_u$ to $\{0\}\times S^1\subset N_u$ is equal to $(\lambda+\lambda_u)d\theta$, for $\lambda$ independent of $u$ and $\lambda_u\to 0$. Also, the limit holonomy of $A$ around $w$ is equal to $e^{\pm 2\pi \lambda}$.

-{\it Limit behavior of the section.} The trivialization of $P_u$ on $N_u$ which puts $A_u$ in balanced temporal gauge, induces a trivialization of $P_w=S^1\times S^1$. Then the section ${\mc T}_w$ becomes a map $T: S^1\to {\mc T}(X)$. Denote $N_u^\tau=S_u^\tau\times S^1=[0, r_u-\tau]\times S^1$. We require that for any $\theta\in S^1$,
\begin{align*}
\lim_{\tau\to \infty}\left(\limsup_{u\to\infty} d_{Hauss}\left( \varphi_u(S_u^\tau\times \{\theta\}), T(\theta) \right) \right)=0,
\end{align*}
and either
\begin{align*}
\lim_{\tau\to\infty}\left(\limsup_{u\to\infty} d(\varphi_u(0, \theta), b(T(\theta)))+d(\varphi(r_u-\tau, \theta), e(T(\theta)))\right)=0
\end{align*}
or the same formula holds switching $b$ and $e$(in one case $i\lambda_u$ is positive for large $u$ and in the other case $i\lambda_u$ is negative for large $u$).

-{\it Limit behavior of the energy density.} We have
\begin{align*}
\lim_{\tau\to\infty}\limsup_{u\to\infty} \left\| d_{A_u} \phi_u \right\|_{L^\infty(N_u^\tau)}=0.
\end{align*}

\item {\bf Convergence near the old interior nodes.} We require for each old interior node $w\in C$ the same condition as in the condition (5) or (6)(depending on whether $w$ is a connecting or tree node) in the definition of convergence of $c$-STHM in \cite[Section 7.4]{Mundet_Tian_2009}.

\item {\bf Convergence near the old boundary nodes of type 1.} If a boundary node $w'\in C$ is of type 1, we require the following conditions to hold:

-{\it Limit behavior of the gluing data.} Since the isomorphism $\rho_{u, l}$ can be extended over $y$ and $y'$, which are the preimages of $w'$ in the normalization $\hat{C}$, the gluing data of $P_u$ over $w'$ can be viewed as an element of ${\rm Iso}(P_y, P_{y'})$. We require that the sequence converges to that of the gluing data of $P$ at $w'$ modulo the stablizer of $\varphi(y)$.

-{\it Limit behavior of the connection and the section.} Let $y$ and $y'$ be the preimages of $w$ in the normalization $C'$. Then they can be identified with the the preimages of $w$ in the normalization of $C_u$, for every $u$. Take $U_y$(resp. $U_{y'}$) a small neighborhood of $y$(resp. $y'$) in the normalization $C'$. Then it can be identified with a small neighborhood of $y$(resp. $y'$) in $C_u'$. We require that the connections and the sections converges on $U_y$ and $U_{y'}$ both in $C^\infty$-topology.

\item {\bf Convergence near the old boundary nodes of type 2.} If a boundary node $w''\in C$ is of type 2, then it must be a connecting node. Let $B_{w''}=\{b_1, \ldots, b_{d-1}\}$ be the index set of vanishing bubbles over $w''$. Let $O_j= O_{u, b_j}$ and assume that $O_{d-1}$ contains a boundary component of $C_u$. $O_{d-1}$ might be sphere with a boundary node of type 2, in which case we denote $y_j, y_j'$ to be the nodes on $O_j$ with $y_j$ on the side of boundary; or $O_{d-1}$ is a disk bubble, in which case we denote $y_j, y_j'$ to be the node on $O_j$ for $j< d-1$. Then we require the following conditions to hold(note that in this case we don't have requirement on the gluing data, and the following holds either $\pi_u^{co}$ collapses a disk at $w''$ or not):

-{\it The vanishing bubbles do vanish.} For each $b_j\in B_{w''}$ the energy $\left\| d_{A_u}\phi_u \right\|_{L^2(O_j)}$ converges to 0.

-{\it Limit behavior of the connections.} The covariant derivative $d_{A_u}$ can be written as $d+\alpha_u$ where $\alpha_u$ is in temporal gauge in $N_{y_0}$ and there is some $\lambda_u\in i{\mb R}$ such that $\lim_{t\to\infty} \alpha_u|_{N_{y_0}(t, \theta)}=\lambda_u d\theta$. And $\lambda_u\to \lambda$ as $u\to\infty$, where $\lambda$ is a logarithm of the limit holonomy of $A$ around $w$. Finally, if $B_z\neq \emptyset$, then $\lambda\in \Lambda^{cr}$ and for large $u$ we have $\lambda_u\notin\Lambda_{cr}$.

-{\it Limit behavior of the sections.} We distinguish two cases.

\begin{enumerate}
\item Suppose that $\lambda_u$ is generic for big enough $u$. Then there is a collection of equivariant and covariantly constant maps(with respect to the limit connection $A_{w''}$ on $P_{w''}$)
\begin{align*}
{\mc T}_{w'', 0}, \ldots, {\mc T}_{w'', d-1}: P_y\to {\mc T}(X),
\end{align*}
such that ${\mc T}_{w''}= \cup_{j=0}^{d-1} {\mc T}_{w'', j}$ and the following holds.

We have trivialization $t_{u, w''} : P_{u, w''}\to S^1\times S^1$ such that the diagram commutes
$$\begin{CD} P_u|_{N_{w''}}  @> \tau^{A_u}   >>   P_{u, w''}\\
@V  \tau_{u, w''}^P VV     @VV  t_{u, w''} V \\
N_{w''}\times S^1   @>>>   S^1\times S^1  \end{CD}$$
where $\tau_{u, w''}$ is the given trivialization of $P_u$ and the bottom map is the projection: $N_{w''}\times S^1\simeq R_{\geq 0}\times S^1\times S^1\to S^1\times S^1$. Under the trivialization $\tau_{u, w''}^P$, we can view $\varphi_u$ on $N_{w''}$ as a map $\phi_{u, w''}: N_{w''}\to X$. Now let
$$T_{u, w'', \epsilon}(\theta)={\mc T}_{w'', 0}\circ \iota_\epsilon\circ \rho_u^{-1} \circ \iota_{u, \epsilon} \circ t_{u, w''}^{-1}(\theta, 1)$$
be a map $T_{u, w'', \epsilon}: S^1\to {\mc T}(X)$. Here $\iota_{u, \epsilon}$ and $\iota_\epsilon$ are bundle maps defined by parallel transports, and $\rho_u: P\to P_u$ is the isomorphism on smooth locus. We require that for any $\theta\in S^1$, we have$$\lim_{\epsilon\to 0} \limsup_{\tau\to \infty} \left(   \limsup_u d_{Hauss} (\phi_{u, w''} ( S_u^\tau\times \{\theta\} ), T_{u, w'', \epsilon}( \theta))\right)=0.$$
On the other hand, the energy of vanishing bubbles converges to 0 hence each vanishing bubble shrink to a map ${\mc T}_j^{new} : P_{u, y_j}' \to {\mc T}(X)$ for $j=1, \ldots, d-1$. More precisely, we define the isomorphism of bundles
$$\gamma_{u, j, \epsilon}: = g_{u, j}\circ \cdots\circ g_{u,1}'\circ g_{u, 1} \circ \iota_{u, \epsilon}^{-1} \circ \rho_u \circ \iota_{\epsilon} : P_{w''}\to P_{u, j}'.$$
Here $g_{u, j}: P_{u, y_{j-1}}\to P_{u, y_j'}$ is the gluing data, and $g_{u, j}': P_{u, y_j'}\to P_{u, y_j}$ is the canonical transport on the bubble $O_{u, j}$ we defined in Section \ref{canonicaltransport}.
We require that for each $j$ and $p\in P_{w''}$,
$$\lim_{\epsilon\to 0} \left( \limsup_{u\to \infty} d\left(    {\mc T}^{new}_j ( \gamma_{u, j, \epsilon}(p)) , {\mc T}_{w'', j} (p)  \right)   \right)=0.$$

\item Suppose that $\lambda_u=\lambda\in \Lambda_{cr}$ for big enough $u$. Then there is no vanishing bubble. And we have equivariant maps ${\mc T}_{u, w''}: P_{w''}\to {\mc T}(X)$. We require first that
$$\lim_{\tau\to\infty}\left(\limsup_{u\to\infty} {\rm diam}_{S^1} \varphi_u(N_{w''}^\tau)\right)=0.$$
Let ${\mc T}_{w''}: P_{w''}\to {\mc T}(X)$ be the chain of gradient segments of the limit ${\mc C}$. We require that for any $p\in P_w$, we have
$$\lim_{\epsilon\to 0}\left(\limsup_{u\to \infty} d\left( {\mc T}_{u, w''}\circ \iota_{u, \epsilon}^{-1}\circ \rho_u\circ \iota_\epsilon^{-1}(p), {\mc T}_{w''}(p) \right) \right)=0.$$
Here $\iota_\epsilon$ is the isomorphism between $P_{w''}$ and $P_{w'',\epsilon}$ given by parallel transport with respect to $A$ and $\iota_{u, \epsilon}$ is the isomorphism between $P_{u, w''}$ and $P_{u, w'', \epsilon}$ given by parallel transport with respect to $A_u$. $\rho_u$ is the isomorphism between $P_{w'', \epsilon}$ and $P_{u, w'', \epsilon}$.
\end{enumerate}

\item {\bf Limit behavior of the energy density.} We have
$$\lim_{\tau\to\infty}\limsup_{u\to\infty} \left\| d_{A_u}\phi_u \right\|_{L^\infty(N_{w''}^\tau)}=0.$$
\end{enumerate}

\section{Compactness}

We will prove the following theorem in this section.
\begin{thm}
    Let $g, h, n$ be nonnegative integers and $\overrightarrow{m}$ be a nonnegative vector such that $(g, h, n, {\overrightarrow m})$ is in the stable domain. Let $K>0$ be any positive real number and let $c\in i{\mb R}$. Let $\{{\mc C}_u\}$ be a sequence of $(c, L)$-STHM's, of genus $g$, with $h$ boundary components and $(n, \overrightarrow{m})$-marked, satisfying $\mc{YMH}_c({\mc C}_u)\leq K$. Then there is a subsequence $\left\{ {\mc C}_{u_j} \right\}$ such that the sequence of isomorphism classes $\left\{ \left[ {\mc C}_{u_j} \right] \right\}$ converges to $[{\mc C}]$ for a $c$-stable twisted holomorphic maps with boundary. 
\end{thm}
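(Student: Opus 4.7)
The plan is to follow the strategy of Mundet--Tian for the closed case, the two genuinely new ingredients being the half-cylinder results of Section 4 for type-2 boundary degenerations and the strip reflection technique of Section 5 for type-1 boundary degenerations. First, since the stabilizations $(C_u^{st},{\bf x}_u^{st}\cup {\bf z}_u^{st})$ lie in the compact Deligne--Mumford space $\ov{\mc M}_{(g,h),(n,\ora m)}$, after passing to a subsequence they converge to a stable marked bordered curve $(C^{st},{\bf x}^{st}\cup{\bf z}^{st})$. Moreover, by Theorem \ref{bubblebound} the number of bubble components is uniformly bounded, so after a further subsequence we may assume that the combinatorial type of $C_u$ is constant and there is a well-defined topological model for the candidate limit curve $C$.

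Next, on compact subsets $K_l$ exhausting the smooth locus of $C\setminus {\bf z}$, I would apply the local slice theorem of Uhlenbeck to place $A_u$ in Coulomb gauge and extract, modulo gauge, a subsequential $C^\infty$-limit $(A,\varphi)$ on each principal component, away from the finitely many points where energy density concentrates. At each such concentration point a standard rescaling/bubbling argument produces either a sphere bubble (at interior points) or a disk bubble with boundary in $L$ (at boundary points, where the Lagrangian condition guarantees the rescaled limit extends across the boundary), and the process terminates after finitely many steps by Theorem \ref{bubblebound} together with the quantization of bubble energies $\varepsilon_X$. The removal-of-singularities Theorem \ref{removal} then lets us extend the limit tuple smoothly over marked points and boundary marked points on each component.

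The heart of the argument is the analysis at the new nodes, where long necks appear. For a new interior node one has a long cylinder $Z_N$: by the trichotomy of Theorems \ref{thm22}--\ref{thm25}, in the noncritical residue case the two sides meet along a common $S^1$-orbit, while in the critical residue case Theorem \ref{thm26} produces the chain of gradient segments ${\mc T}_w$ connecting the two limit orbits. For a new type-2 boundary node a long half-cylinder forms, and the same trichotomy is supplied by Theorems \ref{thm40}--\ref{thm4.2}. For a new type-1 boundary node, where a long strip forms, I would invoke the reflection construction of Section 5: reflect the strip across $L$ to a cylinder with a continuous family of almost complex structures $I_z$, apply the symmetrized Theorems \ref{thm: x1} and \ref{thm56}, and conclude via Corollary \ref{cor58} that ${\rm diam}_{S^1}\varphi_u(S_{N/2}^+)\to 0$, so the two sides of the node glue at a single point of $P\times_{S^1} L$ (modulo the $S^1$-stabilizer). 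I expect the strip case to be the main technical obstacle, since it has no analogue in \cite{Mundet_Tian_2009} and requires the delicate reflection and rescaled-bubble matching from Section 6.

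Having assembled the candidate limit $\left( (C,{\bf x}\cup {\bf z}),(P,A,\varphi),G,\{{\mc T}_w\}\right)$, I would verify that it is a genuine $(c,L)$-STHM: holomorphicity of $\varphi$ and flatness on bubble components follow from $C^\infty$-convergence and the vanishing of residual curvature under rescaling; the vortex equation on principal components passes to the limit because the canonical volume form $\nu$ varies continuously with the stable curve on $\ov{\mc M}_{(g,h),(n,\ora m)}$; criticality of holonomy at interior marked points and at relevant nodes follows from continuity of residues combined with discreteness of $\Lambda_{cr}^\sigma$; the matching conditions at old and new nodes follow from the orbit/gradient-chain convergence of the neck analyses above; the gluing data $G_w$ is obtained as the limit of $G_{u,w}$ up to flowing by the limit connection on $P_y$, as required in Section \ref{defnofconvergence}; and stability of each bubble follows from the way the bubbling procedure was set up, with the ``unstable'' ghost bubbles of vanishing $d_A\varphi$ collapsed into the data of gradient segments and gluing. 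Combining the individual convergences on principal components, bubble trees, and necks yields a decomposition $\mb N=U_1\cup\cdots\cup U_n$ along which the subsequence converges strictly in the sense of Subsection \ref{defnofconvergence}, which is exactly the convergence in the statement.
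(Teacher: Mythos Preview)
Your overall architecture matches the paper's proof closely, and the identification of the type-1 boundary node (long strip) case as the genuinely new analytic input, handled via the reflection construction of Section~5 and Corollary~\ref{cor58}, is exactly right.

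There is, however, one genuine gap. Your bubbling procedure in the second paragraph is a rescaling at points where $|d_{A_u}\varphi_u|\to\infty$; this finds only \emph{tree} bubbles, and after it terminates you are left with $\limsup_u \|d_{A_u}\varphi_u\|_{L^\infty}$ \emph{bounded} on the necks, not \emph{small}. But Theorems~\ref{thm22}, \ref{thm24}, \ref{thm25}, \ref{thm40}--\ref{thm4.2}, \ref{thm: x1}, \ref{thm56} all require $\|d_\alpha\phi\|_{L^\infty}<\epsilon$ on the neck, so you cannot invoke them yet. The paper inserts an intermediate step (its Section~8.5/9.4): for each neck one extracts a maximal \emph{$\epsilon$-bubbling list}, a sequence of points $z_1^u,\ldots,z_r^u$ in the neck with pairwise distances $\to\infty$ and $\liminf_u|d_{A_u}\varphi_u(z_j^u)|\geq\epsilon$, and adds these as marked points, producing the \emph{connecting} bubbles. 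Only after this does the energy density on each sub-neck drop below $\epsilon$ and the neck theorems apply. The bound on $r$ is not energy quantization by $\epsilon_X$ (connecting bubbles can have arbitrarily small energy, which is precisely why Theorem~\ref{bubblebound} needs the monotonicity arguments of Theorems~\ref{thm9.2} and~\ref{thm9.3}); rather, one shows via a local compactness lemma that each point in an $\epsilon$-bubbling list forces a fixed amount $\delta>0$ of $L^2$-energy in a unit square around it.

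So: between your second and third paragraphs you must insert the $\epsilon$-bubbling list step, carried out separately at new/old interior nodes, new/old type-1 boundary nodes (strips), new/old type-2 boundary nodes (half-cylinders), and original marked points. Once that is done, the rest of your outline goes through as written.
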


The remaining of this section is devoted to its proof. The strategy of the proof is very much in parallel with that of the proof of \cite[Theorem 8.1]{Mundet_Tian_2009}. In the following, we will take subsequences of the original sequence of $(c, L)$-STHM's many but finite times, and still denote the subsequences by the same symbols. And the situations near interior marked points and interior nodes are covered in the proof of \cite[Theorem 8.1]{Mundet_Tian_2009}, so we omit many details related to this.

\subsection{Getting the first limit curve}\label{limitcurve}

Let $\left( C_u, {\bf x}_u\cup {\bf z} \right)$ be the prestable curve underlying ${\mc C}_u$. By Theorem \ref{bubblebound}, the number of bubbles of each $C_u$ is uniformly bounded so we can assume(by taking a subsequence) that all curves $(C_u, {\bf x}_u\cup {\bf z}_u)$ have the same combinatorial type(including the topological type of $C_u$ and the distribution of marked points into different components). 

We want to add new marked points(as few as possible, in some sense) such that the prestable curve becomes stable. Let ${\bf x}_u^0\subset C_u$ be a list of boundary points such that: 1) ${\bf x}_u^0$ is disjoint from boundary nodes and ${\bf x}_u$; 2) every component of $(C_u, {\bf x}_u\cup {\bf x}_u^0\cup {\bf z}_u)$, which has at least a circle boundary component, is stable; 3)  ${\bf x}_u^0$ has as few elements as possible, for each $u$. Then let ${\bf z}_u^0\subset C_u$ be a list of interior points such that: 1) ${\bf z}_u^0$ is disjoint from interior nodes and ${\bf z}_u$; 2) $(C_u, {\bf x}_u\cup {\bf x}_u^0\cup {\bf z}_u\cup {\bf z}_u^0)$ is stable; 3) ${\bf z}_u^0$ has as few elements as possible, for each $u$. Let ${\bf x}_u'={\bf x}_u\cup {\bf x}_u^0$ and ${\bf z}_u'={\bf z}_u\cup {\bf z}_u^0$. The each element in the new sequence $\left( C_u, {\bf x}_u'\cup {\bf z}_u'\right)$ has the same combinatorial type.

Then by taking a subsequence, we can assume that the isomorphism class of $\left( C_u, {\bf x}_u'\cup {\bf z}_u' \right)$ converges to the isomorphism class of a stable curve $\left( C', {\bf x}'\cup {\bf z}' \right)$ in $\ov{\mc M}_{g, h, n, \ora{m}}$ and we can assume that the set ${\bf x}'$(resp. ${\bf z}'$) contains the limit ${\bf x}$(resp. ${\bf z}$) of the sequence ${\bf x}_u$(resp. ${\bf z}_u)$. We call points in ${\bf x}$(resp. ${\bf z}$) {\bf original boundary(resp. interior) marked points}. In the following, the lists of marked points ${\bf x}_u'$ and ${\bf z}_u'$ may increase, and the limit curve $C'$ may change accordingly.

\subsection{Adding tree bubbles, the first step}\label{addtree:1}

We will proceed as in \cite[Section 8.3]{Mundet_Tian_2009}. Take an exhaustion $K_1\subset K_2\subset\cdots$ of the smooth locus of $C'\setminus {\bf x}\cup {\bf z}$ by compact subsets. And for each $K= K_l$ in the sequence, since $\left( C_u, {\bf x}_u'\cup {\bf z}_u' \right)$ converges to $(C', {\bf x}'\cup {\bf z}')$, we can assume that there is a canonical inclusion $$\iota_{K, u}: K \to C_u$$ with image $K_{u}$. We will do the following operations inductively from $l=1$.

\begin{itemize}

\item If $\sup_{K_u} \left| d_{A_u} \varphi_u \right|$ is bounded as $u\to \infty$, then we do nothing. We call this situation that {\bf no tree bubble appears in $K_l$}.

\item If $\sup_{K_u} \left| d_{A_u} \varphi_u \right|=s_u$ is not bounded, then pick for each $u$ a point $z_u^1\in K_u$ such that $\left| d_{A_u}\varphi_u \right|$ attains its maximum at $z_u^1$. By passing to a subsequence, there are two possibilities, according to which we will proceed differently.

{\bf Case A.} $\lim_{u\to\infty} z_u^1$ lies in the interior of $C'$. Then pick an $z_u^2\in C_u$ at distance $s_u^{-1}$ from $z_u^1$. Set ${\bf z}_u''={\bf z}_u'\cup \{z_u^1,z_u^2\}$ and ${\bf x}_u''={\bf x}_u'$.

{\bf Case B.} $\lim_{u\to\infty}z_u^1=x^1\in \partial C'$. Then take a holomorphic local coordinate $f: U_{x^1}\to D(\epsilon)\subset {\mb H}$ such that $f(x^1)=0$. There are two subcases(the division into the following two cases is independent of the choice of the local coordinate): a). $\lim_{u\to\infty}\left(  s^u {\rm Im} z_u^1 \right) =\infty.$ Then take $z_u^2$ in the interior of $C_u$ with distance $s_u^{-1}$ from $z_u^1$. Take ${\bf z}_u''={\bf z}_u'\cup \{z_u^1, z_u^2\}$ and ${\bf x}_u''={\bf x}_u'$; b). $\lim_{u\to\infty}\left( s^u {\rm Im} z_u^1 \right) <\infty$. Take $x_u^1=f^{-1}\left({\rm Re} f(z_u^1) \right)$ and $x_u^2\in \partial C_u$ with distance $s_u^{-1}$ from $x_u^1$. Take ${\bf x}_u''={\bf x}_u'\cup \{x_u^1, x_u^2\}$ and ${\bf z}_u''={\bf z}_u'$.

In any case, we get a new sequence of stable curves $(C_u, {\bf x}_u''\cup {\bf z}_u'')$. By taking a subsequence, we can assume that the sequence converges to a stable curve $(C'', {\bf x}''\cup {\bf z}'')$. In Case A, $C''$ is the union of $C'$ with a sphere bubble $B$ connected at a new interior node $z_0=\lim_{u\to\infty} z_u^1\in K$, the two point set $\{z_u^1, z_u^2\}$ converges to two new marked points on $B$; in Case B-a), $C''$ is the union of $C'$ with a sphere bubble $B$ and a disk bubble $D$, where $D$ is connected to $C'$ through $z_0=\lim_{u\to \infty} z_u^1\in K$ and $B$ is connected to $B$ through some $z_1$, the two point set $\{z_u^1, z_u^2\}$ converges to two new marked points on $B$; in Case B-b), $C''$ is the union of $C'$ with a disk bubble $D$, connected at $z_0=\lim_{u\to\infty} z_u^1\in K$, and the two point set $\{x_u^1, x_u^2\}$ converges to two boundary marked points on $D$.\footnote{There is a special case, i.e., if $z_u^1$ coincides with some newly added marked point subsection \ref{limitcurve}, or converges to some new marked points in \ref{limitcurve}. But we can choose the new marked points wisely to avoid this happen.}

In any case, we can prove as in \cite{Mundet_Tian_2009} that the triple induced on $B$(in Case A and Case B-a) or on $D$(in Case B-b)) has positive energy and hence at least $\epsilon_X$.

We call this situation that {\bf a nontrivial tree bubble appears in $K_l$}.
\end{itemize}

The induction process is, if no tree bubble appears in $K_l$, then we go to $K_{l+1}$; if a nontrivial tree bubble appears in $K_l$, then we replace $(C', {\bf x}_u'\cup {\bf z}_u')$ by $(C'', {\bf x}_u''\cup {\bf z}_u'')$, and modify $K_1, K_2, \ldots$, as follows, and start the process again from the modified $K_1$.

\begin{itemize}

\item Choose a descending sequence $\epsilon_i\to 0$. For any $y\in C'$, let $B(y, \epsilon_i)\subset C'$ be the open ball centered at $y$ of radius $\epsilon_i$.

\item In Case A, replace $K_i$ by $ \left( K_i\cup B  \right) \setminus B(z_0, \epsilon_i)$;

\item in Case B-a), replace $K_i$ by $\left( K_i\cup D\cup B\right) \setminus B(z_1, \epsilon_i)$;

\item in Case B-b), replace $K_i$ by $\left( K_i\cup D \right) \setminus B(z_0, \epsilon_i)$.
\end{itemize}

This induction process must stops at finite times, which means the following. We have added finitely many marked points on $(C_u', {\bf x}_u'\cup {\bf z}_u')$ in the same pattern for all $u$, such that the new sequence of stable marked curves has a subsequence(still denoted by the same symbols) converging to some stable curve $(C'', {\bf x}''\cup {\bf z}'')$, such that for any compact subset $K$ of the smooth locus of $C''\setminus {\bf x}''\cup {\bf z}''$, which is identified with $K_u\subset C_u'$, we have
$$
\limsup_{u\to \infty} \sup_{K_u} \left| d_{A_u} \varphi_u \right|<\infty.
$$
Or in other words, no tree bubble appears in $K_l$ for all $l$.

We replace $C'$ by $C''$, ${\bf x}_u'$ by ${\bf x}_u''$, ${\bf z}_u'$ by ${\bf z}_u''$, ${\bf x}'$ by ${\bf x}''$ and ${\bf z}'$ by ${\bf z}''$. We can still distinguish original marked points from the new marked points.

\subsection{Adding tree bubbles, the second step}\label{tree2}

For bubbling off at new interior nodes $w\in C'$, we can proceed as in \cite[Subsection 8.4]{Mundet_Tian_2009}. For the convenience, we give the basic process.

By definition of the convergence of stable curves, for each $u$, there is a $(C', {\bf x}'\cup {\bf z}')$-admissible complex structure $I_u$ and a set of smoothing parameters $\{\delta_u\}$, and an isomorphism $C_u\simeq C_u'=C'(J_u, \delta_u)$.

\subsubsection{Bubbling off at new interior nodes}

Take any new interior node $z\in C'$ with $\delta_u=\delta_{u, z}\neq 0$ for all $u$. Let $N_{u, z} \simeq \left[ \ln|\delta_{u, z}|-\ln\epsilon, \ln\epsilon \right]\times S^1=:\left[ p_u, q_u \right]\times S^1$ identified with the neck region in $C_u$. Denote by $(t, \theta)$ the standard cylindrical coordinate. For any $\tau>0$, let $N_{u, z}^\tau=\left[p_u+\tau, q_u-\tau \right]\times S^1$. 

If $$\limsup_{\tau\to\infty}\left(\limsup_{u\to\infty} \left\| d_{A_u}\varphi_u \right\|_{L^\infty( {N_{u,z}^\tau}) }\right)<\infty,$$
where the norm is taken with respect to the cylindrical metric, then we do nothing. 

Otherwise, we have
$$\limsup_{\tau\to\infty}\left(\limsup_{u\to\infty} \left\| d_{A_u}\varphi_u \right\|_{L^\infty( {N_{u,z}^\tau}) }\right)=\infty.$$ 
Then we can pick two points $z_u^1, z_u^2\in N_{u, z}$ such that $d\left( z_u^j, \partial N_{u, z} \right) \to\infty$, $s_u=\left| d_{\alpha_u}\varphi_u(z_u^1)\right|\to\infty$, $d(z_u^1, z_u^2)=s_u^{-1}$. And we assume that $s_u$ is equal to the supremum of $\left| d_{A_u}\phi_u \right|$ over the disk $B_1(z_u^1)$. We take ${\bf z}_u''={\bf z}_u'\cup \{z_u^1, z_u^2\}$ and take a subsequence of $(C_u, {\bf x}_u'\cup {\bf z}_u'')$ converging to a stable curve $(C'', {\bf x}'\cup {\bf z}'')$. Then $C''$ is the union of $C'$ with two bubbles, one connecting bubble and one tree bubble(as shown in Figure \ref{figure3}).
\begin{figure}[htbp]
\centering
\includegraphics[scale=0.60]{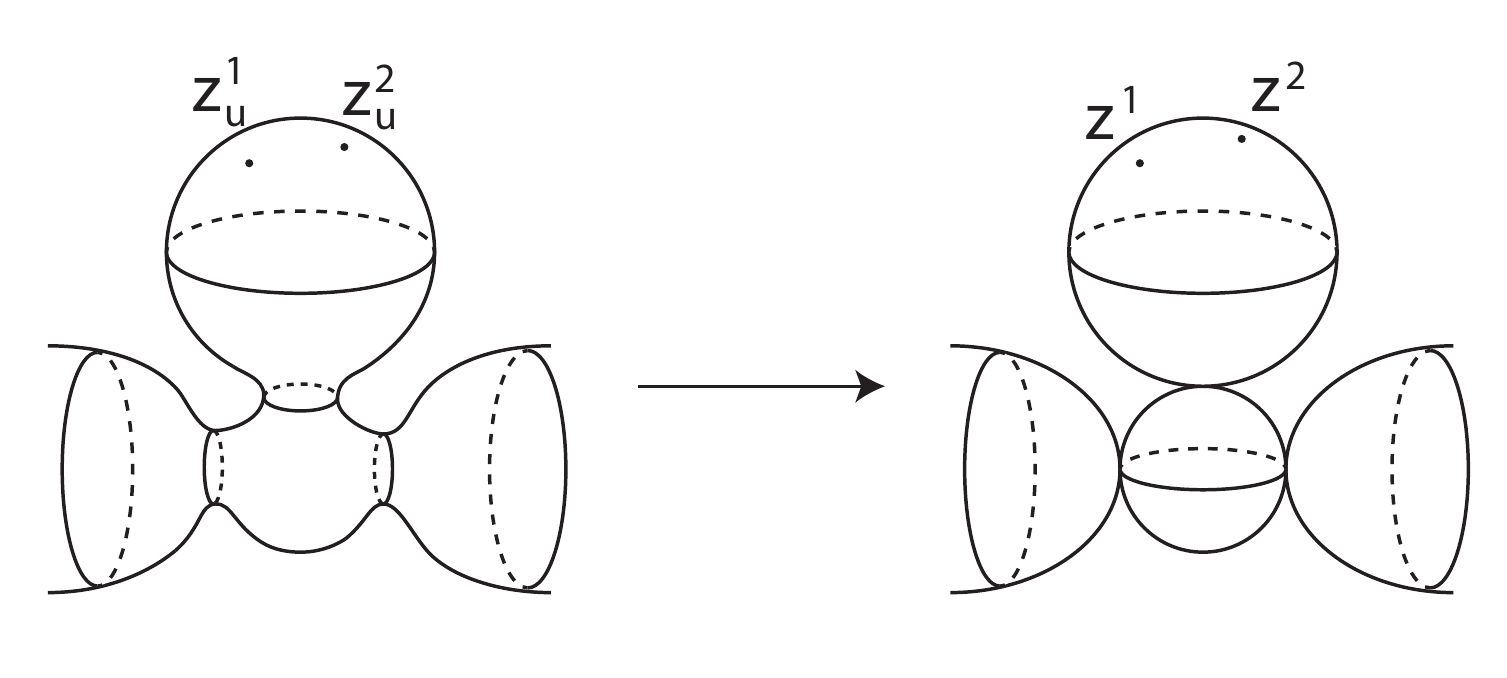}
\caption{Bubbling-off at a new interior node}
\label{figure3}
\end{figure}

Moreover, the limit triple on the tree bubble has positive energy hence $\geq \epsilon_X$. We can replace ${\bf z}_u'$ by ${\bf z}_u''$ and repeat the above process and it must stop at finite times of repeat.

\subsubsection{Bubbling off at old interior nodes and original interior marked points}

If $w\in C'$ is an old interior node. Then we consider the normalization for each $C_u$ near $w$ and apply the same process as before in a neighborhood of each of the two preimages of $w$. The only difference in this case is that instead of having finite cylinders $N_u$ for each $u$, we will have a semiinfinite cylinder $N_{u, w}= [\ln\epsilon, \infty)\times S^1$ and $N_{u, w}^\tau=[\ln\epsilon+\tau, \infty)\times S^1$. Finally, we apply the same technique around each original marked point.

\subsubsection{Bubbling off at new boundary nodes of type 1}

Now we consider boundary nodes. If $w'\in C'$ is a new boundary node of type 1. Then there is a long strip $N_{u, w'} =[\ln|\delta_{u, w'}|-\ln\epsilon, \ln\epsilon]\times [0, \pi]=:[p_u, q_u]\times [0, \pi]$ which can be identified with a subset of $C_u$ for each $u$. Let $N_{u}^\tau:=[p_u+\tau, q_u-\tau]\times [0, \pi]$. Suppose that
$$
\limsup_{\tau\to\infty}\left(\limsup_{u\to\infty} \left\|d_{A_u}\varphi_u\right\|_{L^\infty( N_{u}^\tau) }\right)=\infty
$$
where we also use the standard cylindrical metric. Then we can pick $z_u^1\in N_u$ such that $d(z_u^1, \{p_u, q_u\}\times [0, \pi])\to\infty$, $s_u=\left| d_{A_u}\varphi_u(z_u^1)\right|\to\infty$ and $s_u$ is equal to the supreme of $\left|d_{A_u}\varphi_u \right|$ over the disk centered at $z_u^1$ of radius 1. We distinguish the following cases(by taking subsequences):

\begin{enumerate}

\item $\lim_{u\to\infty} s_u d(z_u^1, \partial N_u)=\infty$. Then take $z_u^2$ lying in the interior of $N_u$ with distance $s_u^{-1}$ to $z_u^1$ and ${\bf z}_u''={\bf z}_u'\cup \{z_u^1 ,z_u^2\}$ and ${\bf x}_u''={\bf x}_u'$;

\item $\lim_{u\to\infty} s_u d(z_u^1, \partial N_u)<\infty$. Suppose $z_u^1=(a_u, \theta_u)$. We can assume that $\theta_u\to 0$($\theta_u\to \pi$ is similar). Then take $x_u^1=(a_u, 0)$ and $x_u^2=(a_u+s_u^{-1}, 0)$; set ${\bf x}_u''={\bf x}_u'\cup \{ x_u^1, x_u^2\}$ and ${\bf z}_u''={\bf z}_u'$.
\end{enumerate}

In any case, a subsequence of the curves $C_u$ with the new set of marked points will converge to some stable curve $C''$. In the first case, $C''$ containes a sphere tree bubble on which the limit of $\{z_u^1, z_u^2\}$ are attached. The situation is shown in Figure \ref{figure4}.

\begin{figure}[htbp]
\centering
\includegraphics[scale=0.60]{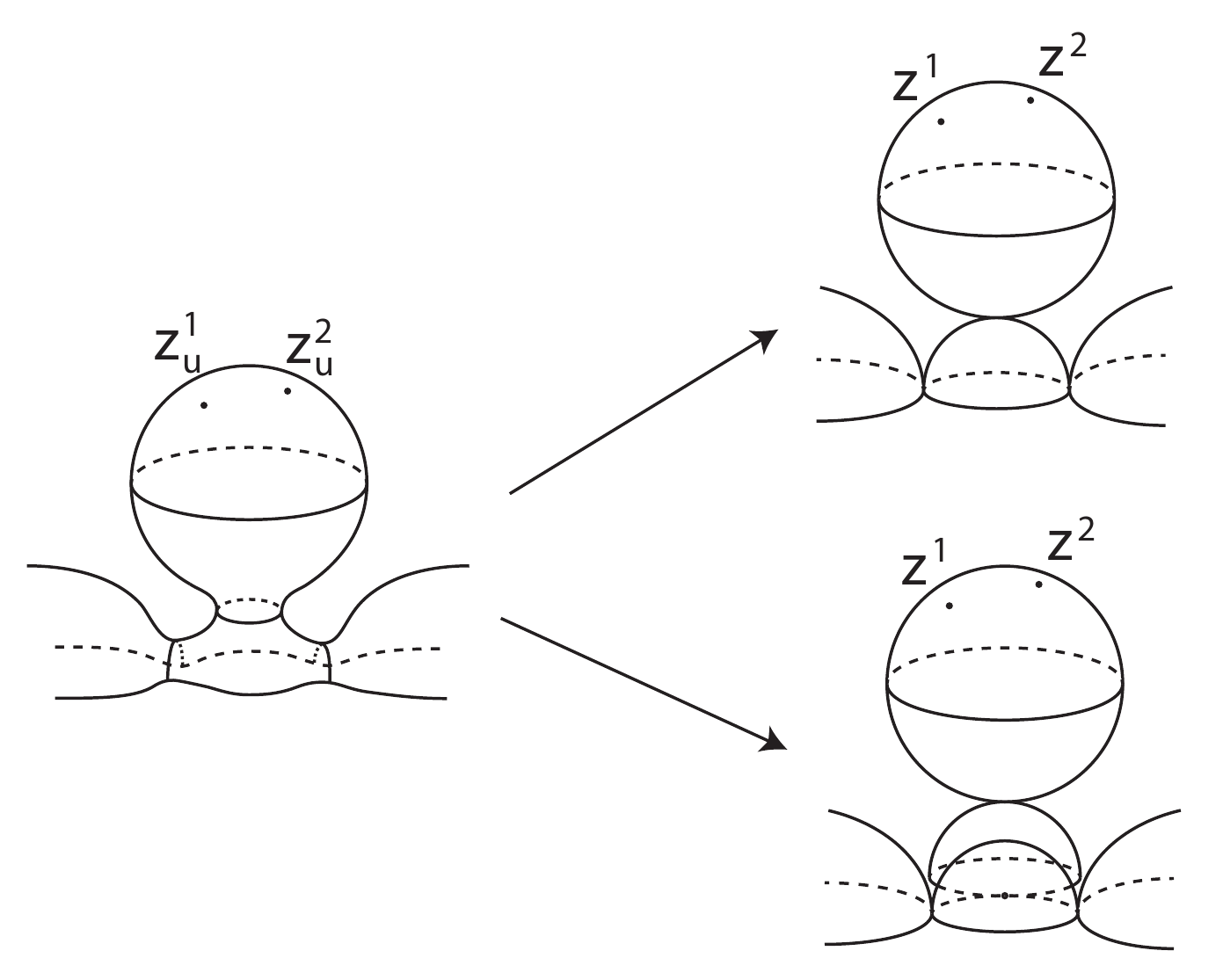}
\caption{Bubbling-off a sphere at a new type-1 boundary node}
\label{figure4}
\end{figure}

In the second case, $C''$ is the union of $C'$ with two disk bubbles, one of which is connecting and one of which is a tree bubble; the tree bubble has two boundary marked points which are limits of $x_u^1$ and $x_u^2$(see Figure \ref{figure5}).

\begin{figure}[htbp]
\centering
\includegraphics[scale=0.60]{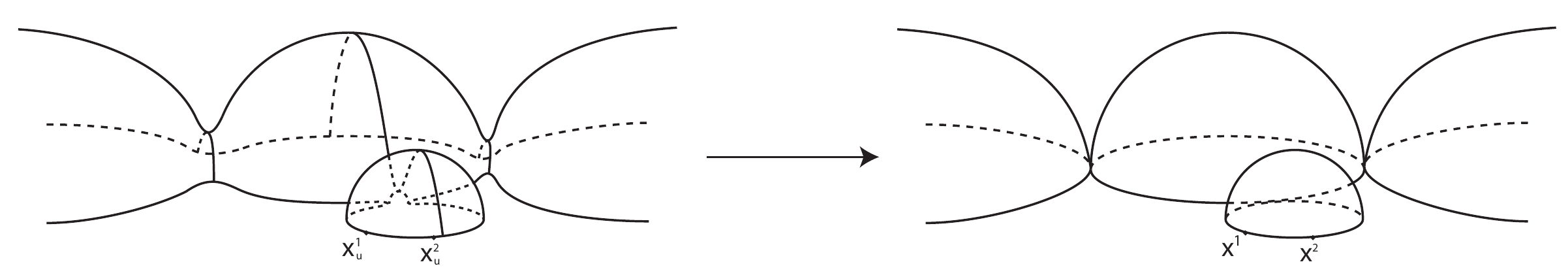}
\caption{Bubbling-off a disk at a new type-1 boundary node}
\label{figure5}
\end{figure}

Each of the tree bubble in the two cases contributes $\geq \epsilon_X$ to the total energy. So if we update the set of marked points as what we did before and repeat the process, then we must stop at finite times.

\subsubsection{Bubbling off at old boundary nodes of type 1 and original boundary marked points}

This case is much similar to the previous one. The only difference is that instead of having a strip of finite length, we will have a semiinfinite strip $[0, \infty)\times [0, \pi]$.

\subsubsection{Bubbling off at new boundary nodes of type 2}

Now consider new boundary nodes of type 2. Let $w''\in C'$ be such a node. Then the ``neck'' of $w''$ in $C_u$ is a cylinder conformal to $N_{u, w''}=[0, r_u]\times S^1$ with $r_u\to \infty$ and $S(u)= \{r_u\}\times S^1$ the boundary circle that shrink to $w''$. For each $\tau>0$, denote $N_u^\tau:= N_{u, w''}^\tau= [\tau, r_u]\times S^1$. Assume that $$
\limsup_{\tau\to\infty} \left(  \limsup_{u\to \infty} \sup_{N_u^\tau} \left| d_{A_u}\varphi_u\right| \right)=\infty.
$$

Then, we can take a sequence of points $z_u^1=(t_u, \theta_u)\in N_u$, such that $t_u\to \infty$ and $s_u= \left| d_{A_u} \varphi_u(z_u)\right| \to \infty$ as $u\to \infty$. By taking a subsequence, we consider the following two cases:
\begin{enumerate} 
\item If $\lim_{u\to \infty} s_u (r_u-t_u)=\infty$, then take another sequence of points $z_u^2\in N_u$ such that $d(z_u, z_u^2)= s_u^{-1}$, set ${\bf z}_u''= {\bf z}_u'\cup \{ z_u^1, z_u^2\}$ and ${\bf x}_u''= {\bf x}_u'$.

\item If $\lim_{u\to \infty} s_u (r_u -t_u)< \infty$, then take another sequence of points $x_u^1= (r_u, \theta_u), x_u^2= (r_u, \theta_u+ s_u^{-1}) \in  S(u)$. Set ${\bf z}_u''= {\bf z}_u'$ and ${\bf x}_u''= {\bf x}_u'\cup \{x_u^1, x_u^2\}$.
\end{enumerate}

Then the new sequence of curves $( C_u, {\bf x}_u'' \cup {\bf z}_u'')$ will subsequentially converge to some stable curve $(C', {\bf x}'' \cup {\bf z}'' )$: in the first case, $C'$ will contain a sphere bubble on which the limits of $z_u^1$ and $z_u^2$ are attached(see Figure \ref{figure6});in the second case, $C'$ will contain a disk bubble on which the limits of $x_u^1 $and $x_u^2$ are attached, and a disk connecting bubble(see Figure \ref{figure7}).
\begin{figure}[htbp]
\centering
\includegraphics[scale=0.50]{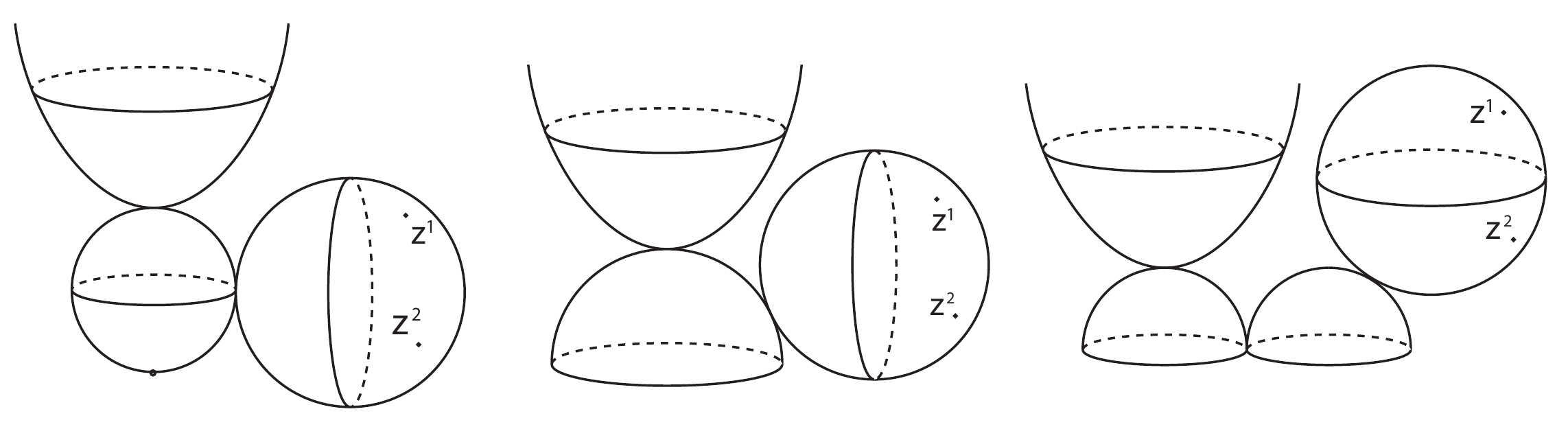}
\caption{Limit curves when bubbling-off a sphere at a new type-2 boundary node, three different cases}
\label{figure6}
\end{figure}
\begin{figure}[htbp]
\centering
\includegraphics[scale=0.50]{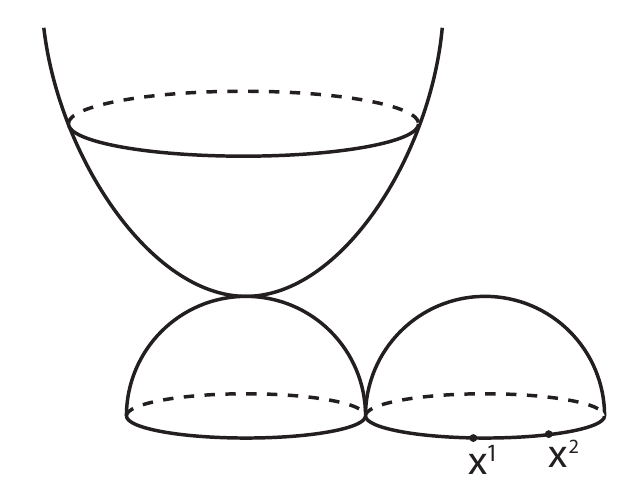}
\caption{Limit curve when bubbling-off a disk bubble at a new type-2 boundary node}
\label{figure7}
\end{figure}

\subsubsection{Bubbling off at old boundary nodes of type 2}

Let $w''\in C'$ be an old boundary node of type 2. Bubbling off at $w$ is the same as the bubbling off at original interior marked points.

\subsubsection{Tree bubbles, summary}

After all the steps in Section \ref{tree2}, we modify the exhausting sequence of compact subsets and repeat the process in Section \ref{addtree:1}. Then all these stop after finite times and we have found every possible tree bubble. This means the: by taking a subsequence and adding new marked points, we get a new sequence of stable curves $\left(C_u, {\bf x}_u\cup {\bf z}_u\right)$, which converges to $\left(C, {\bf x}\cup {\bf z}\right)$. We have
\begin{enumerate}

\item For each compact subset $K\subset C\setminus {\bf x}\cup {\bf z}\cup {\bf w}$, which is identified with a compact subset of each $C_u$, we have
$$
\limsup_{u\to\infty} \left\| d_{A_u} \varphi_u\right\|_{L^\infty(K_u)} <\infty.
$$
Here the norm is taken with respect to the conformal metric on $K$.

\item Let $\{K_l\}_{l\geq 1}$ exhaust $C\setminus {\bf x}\cup {\bf z} \cup {\bf w}$. The complement of $K_l$ in $C_u$ is a union of ``cylindrical-like'' pieces, which is denoted by $N_{u, l}$. Then we have that
$$
\limsup_{l\to\infty} \left( \limsup_{u\to\infty} \left\| d_{A_u} \varphi_u \right\|_{L^\infty(N_{u, l})}\right)<\infty.$$
Here the norm is taken with respect to the cylindrical metric.

\end{enumerate}

\subsection{Conneting bubbles appear}

The matching conditions at various types of nodes are fulfilled only when the energy densities at the neck regions are very small. The bubbling-off of tree bubbles cannot guarentee this condition so we have to find more connecting bubbles.

\subsubsection{At interior nodes}

For each new interior node $w\in C'$, we do the same operation as in \cite[Subsection 8.5]{Mundet_Tian_2009}. Let's just briefly explain what happens. 

Let $N_u$ be the neck associated to the new node $w$. Its conformal length goes to infinity as $u\to \infty$, and we use $d$ to denote the distance function for the cylindrical metric on $N_u$. For $\epsilon$ smaller than the $\epsilon_1$ in Theorem \ref{thm22} and $\epsilon_2$ in Theorem \ref{thm24}, we define an {\bf $\epsilon$-bubbling list} to be a sequence of lists $\left\{  (z_1^u, \ldots, z_r^u)\right\}_u$ which satisfies
\begin{enumerate}
\item $z_j^u\in N_u$;

\item for each $i\neq j$, $d(z_i^u, z_j^u)\to \infty$ and $d(z_i^u, \partial N_u)\to \infty$ as $u\to \infty$;

\item for each $j$, $\liminf_{u\to \infty} \left| d_{A_u} \varphi_u(z_j^u)\right|\geq \epsilon$.
\end{enumerate}
Then we can prove the following lemma:

\begin{lemma}\cite[Lemma 8.2]{Mundet_Tian_2009} There is a constant $\kappa_b>0$ depending only on $X, \omega, I$, such that if $\left\{ (z_1^u, \ldots, z_r^u)   \right\}_u$ is an $\epsilon$-bubbling list, then $r\leq \kappa_b \sup_u \mc{YMH}_c({\mc C}_u)$.
\end{lemma}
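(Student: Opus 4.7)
The plan is to show that each point in an $\epsilon$-bubbling list consumes a definite amount of Yang-Mills-Higgs energy in a neighborhood, and then exploit the asymptotic disjointness of these neighborhoods to bound $r$ by the total energy. The argument mirrors the standard ``energy quantization'' trick used for pseudoholomorphic bubbles in Gromov-Witten theory, but transported to the twisted setting.

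First I would invoke the result of the preceding tree-bubbling step, which guarantees a uniform $L^\infty$ bound $M>0$ on $|d_{A_u}\varphi_u|$ over the neck $N_u$ (this is precisely property (2) of the ``Tree bubbles, summary'' at the end of Section 8.3). Next, I would apply a mean-value (Bochner-type) inequality for twisted holomorphic pairs: there exists $C=C(X,\omega,I,M)>0$ such that for every twisted pair $(\alpha,\phi)$ on a unit disk satisfying either the vortex equation or flatness, with $\|d_\alpha\phi\|_{L^\infty}\le M$,
\[
|d_\alpha\phi|^2(z)\;\le\; C\int_{B(z,1)}|d_\alpha\phi|^2\,d\mathrm{vol}.
\]
This is analogous to the mean-value estimate proved in \cite{Cieliebak_Gaio_Mundet_Salamon_2002} or the one used implicitly in \cite{Mundet_Tian_2009}; it follows from an elliptic regularity argument applied to the Bochner inequality for $|d_A\varphi|^2$, whose zeroth-order term is controlled by the curvature $F_A$ (bounded via the vortex equation) and the fixed geometry of $X$.

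Combining this with the third bullet of the definition of an $\epsilon$-bubbling list gives, for each $j$ and all sufficiently large $u$,
\[
\int_{B(z_j^u,1)}|d_{A_u}\varphi_u|^2\,d\mathrm{vol}\;\ge\;\frac{\epsilon^2}{2C}.
\]
By the second bullet of the definition, for $u$ large enough the unit balls $B(z_j^u,1)$ lie inside $N_u$ and are pairwise disjoint. Summing over $j=1,\dots,r$ and noting that $\|d_{A_u}\varphi_u\|_{L^2}^2\le \mc{YMH}_c({\mc C}_u)$, one obtains
\[
r\cdot\frac{\epsilon^2}{2C}\;\le\;\sum_{j=1}^r\int_{B(z_j^u,1)}|d_{A_u}\varphi_u|^2\,d\mathrm{vol}\;\le\;\mc{YMH}_c({\mc C}_u)\;\le\;\sup_u\mc{YMH}_c({\mc C}_u),
\]
so $\kappa_b:=2C/\epsilon^2$ (depending only on $X,\omega,I$, since $M$ and $\epsilon$ are fixed by the geometry) works.

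The main obstacle is the mean-value inequality itself: establishing it carefully for the gauged equation requires a Bochner computation that must accommodate both the vortex case and the flat case on bubbles, and one has to know the $L^\infty$ bound $M$ holds not just pointwise but uniformly enough to apply elliptic regularity on unit balls. In the body of the paper this is inherited from the preceding tree-bubbling cleanup, so the remaining step is routine, but keeping track of which components satisfy which equation (and therefore which a priori curvature bound) is where one has to be careful.
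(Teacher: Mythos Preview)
Your argument is correct and follows the same overall energy-quantization strategy. Note, however, that the paper does not actually prove this lemma itself: it is quoted verbatim from \cite[Lemma 8.2]{Mundet_Tian_2009}. The paper does prove the analogous statement for type-1 boundary nodes immediately afterward (the lemma with constant $\kappa_d$), and that proof differs from yours in the key step. Where you invoke a Bochner/mean-value inequality to convert the pointwise lower bound $|d_{A_u}\varphi_u(z_j^u)|\ge\epsilon$ into an $L^2$ lower bound on a unit ball, the paper instead argues by compactness and contradiction: it shows that if both $\|\alpha\|_{W^{3,p}}$ and $\|d_\alpha\phi\|_{L^2}$ are small on a unit square then $\|d_\alpha\phi\|_{L^\infty}<\epsilon$, by taking a hypothetical counterexample sequence, passing to a limit via \cite[Lemma 3.4]{Mundet_Tian_2009}, and obtaining a contradiction. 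The disjoint-balls summation step is then identical to yours. Your route is more explicit and in principle quantitative (one could read off $\kappa_b$ from the Bochner constant), while the paper's soft compactness argument sidesteps having to establish a mean-value inequality for the gauged equation, at the price of yielding the constant only implicitly.
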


Then for each new interior node $w\in C'$, we take an $\epsilon$-bubbling list of maximal length. If the list is empty, then we do nothing. Otherwise, add to the marked point sets ${\bf z}_u'$ the points $z_1^u, \ldots, z_r^u$. Passing to a subsequence, we may assume that the new sequence of stable curves converges to some stable curve $(C'', {\bf x}''\cup {\bf z}'')$, by which we replace $(C', {\bf x}'\cup {\bf z}')$. 

We do a similar process for each old interior node, each old boundary node of type 2, and each original interior marked point of $C'$.

\subsubsection{At type-1 boundary nodes}

Now we consider new boundary nodes of type 1. Let $w\in C'$ be such a node. $N_u:=[p_u, q_u]\times [0, \pi]$ be the corresponding neck in $C_u$. We assume that $N_u$ is contained in a principal component of $(C_u, {\bf x}_u\cup {\bf z}_u)$. Let the volume form on $N_u$ be $f_u dt\wedge d\theta$. Then for any $l>0$, there is a constant $\kappa_l$ independent of $u$ such that
$$\left| \nabla^l f_u(z) \right|\leq \kappa_l e^{-\min\{t-p_u, q_u-t\}}.$$
Take any trivialization of $P_u$ over $N_u$, and write $d_{A_u}=d+\alpha_u$. Since $(\alpha_u, \phi_u)$ satisfies the vortex equation, we have
$$
d\alpha_u =f_u(c-\mu(\phi_u)) dt\wedge d\theta.$$

Let $\epsilon>0$ be smaller than the $\epsilon_5$ in Corollary \ref{cor58}. We define an {\bf $\epsilon$-bubbling list} to be a sequence of lists $\{(y_1^u, \ldots, y_r^u)\}_u$ satisfying
\begin{enumerate}

\item $y_j^u\in N_u$ and $\min\left\{ d(y_j^u, \{p_u\}\times [0, \pi]), d(x_j^u, \{q_u\}\times [0, \pi]) \right\}\to\infty$ as $u\to\infty$;

\item for each $i\neq j$, $d(y_i^u, y_j^u)\to\infty$ as $u\to\infty$;

\item for each $j$ we have $\liminf_{u\to\infty} \left| d_{\alpha_u}\phi_u(y_j^u) \right|\geq\epsilon$.

\end{enumerate}

\begin{lemma} There is some constant $\kappa_d>0$ depending only on $X$, $\omega$ and $I$, such that for any $\epsilon$-bubbling list $\left\{ (y_1^u, \ldots, y_r^u) \right\}$, $r\leq \kappa_b \mc{YMH}_c(C_u)$.
\end{lemma}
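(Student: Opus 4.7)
The plan is to follow the template of \cite[Lemma 8.2]{Mundet_Tian_2009}: show that for some $\delta=\delta(\epsilon,X,\omega,I,L)>0$ each bubble point $y_j^u$ captures at least $\delta$ units of Yang-Mills-Higgs energy in a fixed-radius neighborhood $U_j^u \subset N_u$ (measured in the cylindrical metric). Since conditions (1) and (2) in the definition of an $\epsilon$-bubbling list force the $y_j^u$ to drift pairwise to infinity and to stay conformally far from the vertical ends $\{p_u,q_u\}\times[0,\pi]$, the neighborhoods $U_1^u,\ldots,U_r^u$ are pairwise disjoint and contained in $N_u$ for all large $u$. Summing the energies then yields $r\delta \leq \mc{YMH}_c({\mc C}_u)$, so the constant $\kappa_d := 1/\delta$ works.

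To produce the local energy lower bound, fix $j$ and split into subcases according to how $y_j^u=(t_j^u,\theta_j^u)$ sits relative to the Lagrangian boundary $[p_u,q_u]\times\{0,\pi\}$. After passing to a subsequence we may assume either (A) $\min\{\theta_j^u,\pi-\theta_j^u\}\to\infty$ in the rescaled sense (i.e., $y_j^u$ stays a uniformly positive cylindrical distance from the horizontal boundary), or (B) $\theta_j^u\to 0$ or $\theta_j^u\to\pi$. In Case (A) a unit disk centered at $y_j^u$ lies in the interior of $N_u$, and the argument reduces to the interior-node setting: combining the vortex equation with the exponential decay bound $|f_u(z)|+|\nabla f_u(z)|\leq \kappa_1 e^{-\min\{t-p_u,q_u-t\}}$ on the conformal factor, we have $\|d\alpha_u\|_{L^\infty}$ small, so the standard mean-value inequality for twisted holomorphic pairs (as used in the proof of \cite[Lemma 8.2]{Mundet_Tian_2009}; compare also \cite[Proposition B.4.9]{McDuff_Salamon_2004}) yields that YMH-energy less than some $\delta_A$ on the unit disk would force $|d_{\alpha_u}\phi_u(y_j^u)|<\epsilon/2$, contradicting condition (3) of the bubbling list.

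In Case (B) we apply the reflection construction of Section 5 on a half-disk around $y_j^u$, obtaining a pair $(\widetilde\alpha_u,\widetilde\phi_u)$ defined on a full disk and holomorphic with respect to the continuous family of almost complex structures $I_z$ satisfying \eqref{kk}. The bound $\left\|\iota_{\widetilde\nu}d\widetilde\alpha\right\|_{L^\infty}\leq C$ together with the exponential decay of $f_u$ give the hypothesis $|d\alpha|\leq \eta_0 e^{|t|-N}$ needed for Theorem \ref{thm: x1} and Corollary \ref{cor58} on a neighborhood of $y_j^u$. Contrapositively to Corollary \ref{cor58}, if the YMH-energy on a fixed half-disk around $y_j^u$ were smaller than some $\delta_B$, one would have both $\|d_{\alpha_u}\phi_u\|_{L^\infty}<\epsilon_5$ and the curvature decay hypothesis on a slightly smaller half-disk, which would yield ${\rm diam}_{S^1}\phi_u\leq K_5 e^{-\sigma_5 N}$ and in particular $|d_{\alpha_u}\phi_u(y_j^u)|<\epsilon/2$, again contradicting the bubbling-list condition. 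Taking $\delta=\min(\delta_A,\delta_B)$ completes the argument.

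The main obstacle is Case (B): the Lagrangian boundary precludes any direct interior mean-value inequality, and one must pay the price of passing through the reflection of Section 5, which produces only a continuous (not smooth) almost complex structure. The saving grace is that all of the analytic machinery of Section 5, including Theorem \ref{thm: x1} and Corollary \ref{cor58}, was engineered precisely for this situation; verifying that the hypotheses (particularly the exponential curvature bound \eqref{curv}) transfer from the original vortex equation on $N_u$ to the reflected pair on the half-disk is the only nontrivial bookkeeping.
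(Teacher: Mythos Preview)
Your overall plan---show that each bubble point captures at least $\delta$ units of energy in pairwise disjoint neighborhoods, then sum---is correct and matches the paper. Your Case~(A) is fine. But Case~(B) has a genuine gap, and the paper's route is both simpler and different from what you propose.

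The problem in Case~(B) is that Corollary~\ref{cor58} is the wrong tool. That corollary takes as \emph{hypothesis} that $\|d_\alpha\phi\|_{L^\infty}\le\epsilon_5$ on a long strip and concludes a diameter bound ${\rm diam}_{S^1}\phi(S_{N/2}^+)\le K_5 e^{-\sigma_5 N}$. Two things go wrong in your invocation. First, you assert ``small YMH-energy on a fixed half-disk $\Rightarrow\|d_{\alpha_u}\phi_u\|_{L^\infty}<\epsilon_5$'' without justification; but that implication is precisely the local energy-to-derivative estimate you are trying to establish (and once you have it with $\epsilon$ in place of $\epsilon_5$ you are already done, since $\epsilon<\epsilon_5$). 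Second, even granting the hypotheses, the conclusion of Corollary~\ref{cor58} is a bound on the $S^1$-diameter of the image, not on the pointwise covariant derivative; on a half-disk of fixed size $N$ the bound $K_5 e^{-\sigma_5 N}$ is just a constant, and in any case small image diameter does not force $|d_\alpha\phi(y_j^u)|<\epsilon/2$. (A separate minor point: in Case~(A) the quantity $\min\{\theta_j^u,\pi-\theta_j^u\}$ is bounded by $\pi/2$ and cannot tend to infinity; you presumably mean it is bounded away from~$0$.)

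The paper avoids the case split altogether. It proves by a compactness/contradiction argument, using \cite[Lemma~3.4]{Mundet_Tian_2009}, that there is $\delta>0$ such that any twisted holomorphic pair $(\alpha,\phi)$ on the unit square $S$ with $\|\alpha\|_{W^{3,p}(S)}\le\delta$ and $\|d_\alpha\phi\|_{L^2(S)}\le\delta$ satisfies $\|d_\alpha\phi\|_{L^\infty(S)}<\epsilon$. Since the strip has width $\pi>1$, every $y_j^u$ lies in some unit square $S_u^j\subset N_u$; the exponential decay of the conformal factor $f_u$ together with the vortex equation forces $\|\alpha_u\|_{W^{3,p}(S_u^j)}\to 0$, so the claim gives $\|d_{\alpha_u}\phi_u\|_{L^2(S_u^j)}\ge\delta$ directly. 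No reflection and no appeal to Corollary~\ref{cor58} are needed; the boundary is handled uniformly because the compactness lemma works on the fixed domain $S$ (with or without a Lagrangian side). If you want to salvage your boundary case, the right fix is to prove the same small-energy $\Rightarrow$ small-$L^\infty$-derivative claim on a half-square with Lagrangian boundary, either by the same compactness argument or by reflecting and invoking an interior mean-value inequality---but not via Corollary~\ref{cor58}.
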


\begin{proof} We can claim that there exists $\delta>0$ such that if $(\alpha, \phi): S\to X$ is a twisted holomorphic pair from the unit square $S$ to $X$ with , then 
$$\|\alpha\|_{W^{3, p}(S)}\leq \delta,\ \left\| d_\alpha\phi \right\|_{L^2(S)}\leq \delta \Rightarrow \left\| d_\alpha\phi \right\|_{L^\infty(S)}<\epsilon.$$
If the claim is not true, then there exists a sequence of pairs $(\alpha_j, \phi_j): S\to X$ such that $\left\| \alpha_j \right\|_{W^{3, p}}\to 0$, $\left\| d_{\alpha_j}\phi_j \right\|_{L^2}\to 0$ with $\left\| d_{\alpha_j}\phi_j \right\|_{L^\infty}\geq \epsilon$. By \cite[Lemma 3.4]{Mundet_Tian_2009}, passing to a subsequence, $\phi_j$ converges to some $\phi: S\to X$ in $W^{2,p}$-topology. Hence $\|d\phi\|_{L^2}=0$, i.e. $d\phi\equiv 0$. But $d_{\alpha_j}\phi_j\to d\phi$ in $C^0$-topology, this contradicts with $\left\| d_{\alpha_j}\phi_j \right\|_{L^\infty}\geq \epsilon$.

Now suppose that $\left\{ (y_1^u, \ldots, y_r^u) \right\}_u$ is an $\epsilon$-bubbling list and let $S_u^j\subset N_u$ be a unit square that contains $y_j^u$. It follows from the definition of $\epsilon$-bubbling lists that for $u$ big enough, $S_u^j$ are pairwise disjoint. We then can deduce that $\left\| \alpha_u \right\|_{W^{3, p}(S_u^j)}\to 0$ for each $j$. Then by the previous claim, we must have $\left\| d_{\alpha_u}\phi_u \right\|_{L^2(S_u^j)}\geq \delta$. Hence $\kappa_d=\delta^{-1}$ is qualified for this lemma.
\end{proof}

Then let $\left\{ (y_1^u, \ldots, y_r^u) \right\}_u$ be an $\epsilon$-bubbling list of maximal length. If the list is empty, then we do nothing. If not, suppose $y_j^u=(a_j^u, \theta_j^u)$ and set ${\bf x}_u''={\bf x}_u'\cup \left\{ x_j^u=(a_j^u, 0)|1\leq j\leq r \right\}$. Passing to a subsequence, we may assume that the sequence $(C_u, {\bf x}_u''\cup {\bf z}_u')$ converges to some stable curve $(C'', {\bf x}''\cup {\bf z}')$. The curve $C''$ obtained from $C'$ by adding a sequence of disk connecting bubbles at $w$(see Figure \ref{figure8}).
\begin{figure}[htbp]
\centering
\includegraphics[scale=0.70]{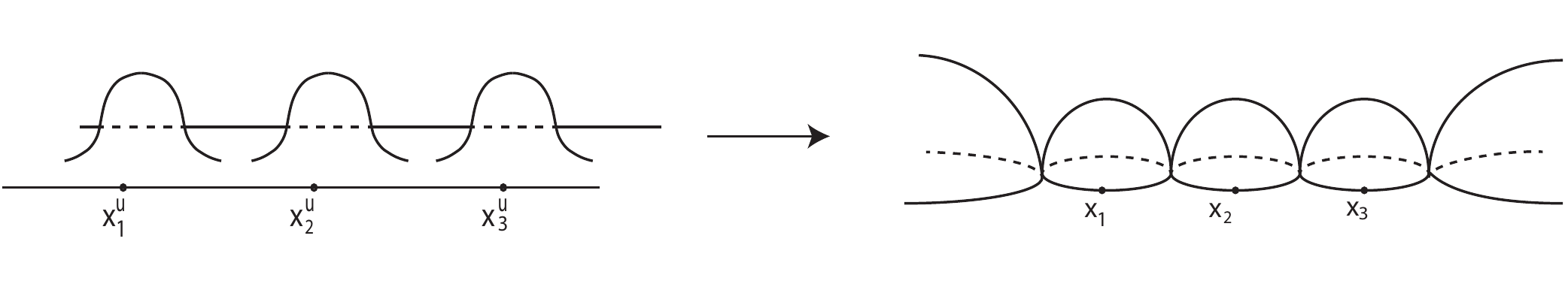}
\caption{An $\epsilon$-bubbling list at a new type-1 boundary node}
\label{figure8}
\end{figure}

We repeat this process for each new boundary node of type 1, and do a similar operation to each old boundary nodeof type 1. Finally, we do a similar operation to each original boundary marked point.

\subsubsection{At type-2 boundary nodes}

Let $w''$ be a new boundary node of type 2 and $N_u= [0, r_u]\times S^1$ be the corresponding neck in $C_u$. Let $(\alpha_u, \phi_u)$ be the corresponding twisted pair of $(A_u, \varphi_u)$ under some local trivialization. We define an {\bf $\epsilon$-bubbling list} to be a sequence of lists $\{(z_1^u, \ldots, z_s^u)\}_u$ such that
\begin{enumerate}
\item $z_j^u=(t_j^u, \theta_j^u)\in N_u$, $t_1^u<t_2^u<\ldots< t_r^u$ and $\lim_{u\to\infty} t_j^u=\infty$;

\item for each $i\neq j$, $t_i^u-t_j^u\to\infty$;

\item for each $j$ we have $\liminf_{u\to\infty} \left| d_{\alpha_u} \phi_u(z_j^u)\right| \geq \epsilon$.
\end{enumerate}

In the same way, we have a uniform upper bound of the number $r$ for an $\epsilon$-bubbling list at any new boundary node of type 2. 

Now let $\{(z_1^u, \ldots, z_r^u)\}_u$ be an $\epsilon$-bubbling list of maximal length. If $r=0$, then we do nothing. Otherwise, we distinguish two cases(by taking a subsequence)
\begin{itemize}
\item $\lim_{u\to\infty} r_u- t_r^u=\infty$. Then we take two points $x_{r+1}^u, x_{r+1}^{u'}\in \{r_u\}\times S^1\subset N_u$ such that $d(x_{r+1}^u, x_{r+1}^{u'})=1$. Then add to the marked point set ${\bf z}_u'$ the points $z_1^u, \ldots, z_r^u$ and add to ${\bf x}_u'$ the points $x_{r+1}^u, x_{r+1}^{u'}$. Then a subsequence of the new sequence of stable curves converges to some stable curve $(C'', {\bf z}''\cup {\bf x}'')$, which looks like Figure \ref{figure9}.
\begin{figure}[htbp]
\centering
\includegraphics[scale=0.80]{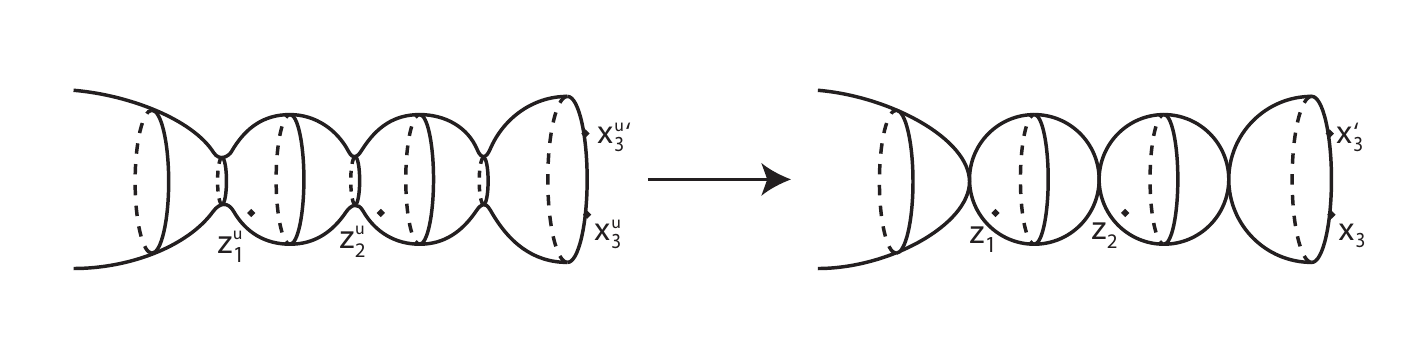}
\caption{An $\epsilon$-bubbling list at a new type-2 boundary node with $r=2$}
\label{figure9}
\end{figure}

\item $\lim_{u\to \infty} r_u-t_r^u<\infty$. Then take $x_r^u= (r_u, \theta_r^u)\in N_u$ and $x_r^{u'}=(r_u, \theta_r^u+1)$. Add to the marked point set ${\bf z}_u'$ the points $z_1^u, \ldots, z_{r-1}^u$ and add to ${\bf x}_u'$ the points $x_r^u, x_r^{u'}$. Then a subsequence of the new sequence of stable curves converges to some stable curve $(C'', {\bf z}''\cup {\bf x}'')$, which looks like Figure \ref{figure10}.
\begin{figure}[htbp]
\centering
\includegraphics[scale=0.80]{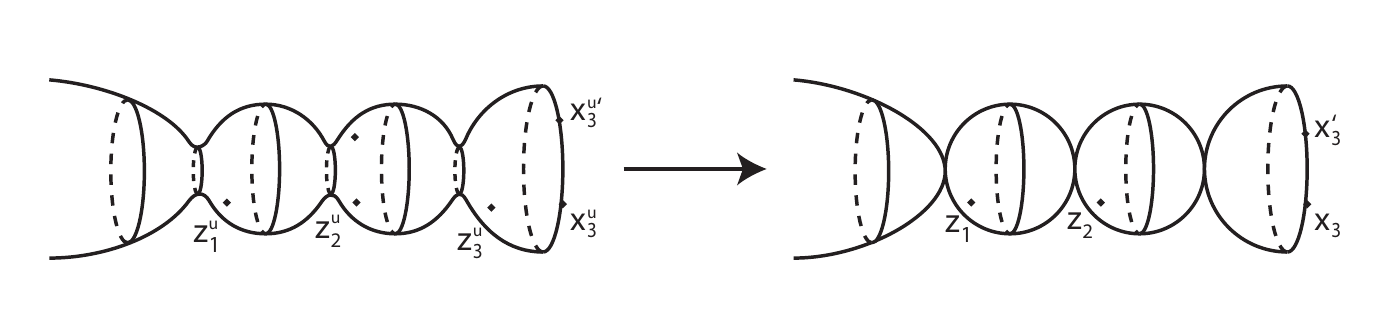}
\caption{An $\epsilon$-bubbling list at a new boundary node with $r=3$}
\label{figure10}
\end{figure}

\end{itemize}

\subsection{Constructing the limit, first part}

After all the above process, we obtain a stable curve $(C', {\bf x}'\cup {\bf z}')$. We have ${\bf x}\subset {\bf x}'$(resp. ${\bf z}\subset {\bf z}'$) which are the limit ${\bf x}_u$(resp. ${\bf z}_u$). We may assume that for any $u$, there is a $(C', {\bf x}\cup {\bf z})$-admissible complex structure $I_u$ and smoothing parameters $\{\delta_u\}$, and biholomorphism $\xi_u': C_u\to C'(I_u, \delta_u)$. Moreover, $I_u$ converges to $I$ on $C'$.

\subsubsection{Constructing the bundle, the connection and the section}

Take a compact set $K\subset C'\setminus {\bf z}$ disjoint from nodes, denote $\iota_{K, u}: K\to C'(I_u, \delta_u)$ the canonical inclusion with image $K_u$. Since we have applied the previous processes, we can assume that
$$
\sup_u\sup_{K_u} \left| F_{A_u} \right|<\infty, \ \ \ \sup_u\sup_{K_u} \left| d_{A_u} \varphi_u \right|<\infty.
$$
Then standard arguments can be applied to deduce that there is an $S^1$-principal bundle $P$ over the smooth locus of $C'\setminus {\bf z}$, a smooth connection $A$ on $P$, a smooth section of $P\times_{S^1} X$ and bundle isomorphisms $\rho_{K, u}: P|_K\to \iota_{K, u}^* P_u|_{K_u}$, such that $\rho_{K, u}|_{K'}=\rho_{K', u}$ for $K'\subset K$ and $\rho_{K, u}^*\iota_{K, u}^* (A_u, \varphi_u)$ converges to $(A, \varphi)$ in $C^\infty(K)$. 

The convergence implies that $\ov\partial_A\varphi=0$, and that on each principal components of $C'$, the pair $(A, \varphi)$ satisfies the equation
$$\iota_{\nu} F_A+\mu(\varphi)=c,$$
where $\nu$ is the canonical volume form on $(C^{st}, {\bf x}^{st}\cup {\bf z}^{st})$. It follows that the curvature of $A$ is bounded(since $\mu$ is bounded) and hence $A$ is a meromorphic connection. On the other hand, the restriction of $A$ to each bubble component of $C'$ is flat. Moreover, $\left\| d_A\varphi \right\|_{L^2}$ is finite.

By the removal of boundary singularity (Theorem \ref{removal}), we can assume that the triple $(P, A, \varphi)$ is extended to each pair $y$ and $y'$, where $\{y, y'\}$ is the preimage of any boundary node $w'$ of type 1.

We have to delete the new marked points, which may violate the stability because then there might be unstable connecting bubbles with zero energy. Let $\{O_b\}$ be the collection of those bubbles in $C'$ and define $C$ to be the quotient $C'/\cup O_b$. The marked point set ${\bf x}\cup {\bf z}$ descends to a list of points in $C$ via the quotient map. Then $(C, {\bf x}\cup {\bf z})$ is a prestable curve. This will be the support of our limit. The triple $(P, A, \varphi)$ descends to a triple over $(C, {\bf x}\cup {\bf z})$, which we denote by the same symbols.

\subsubsection{Constructing the gluing data}

The method of constructing gluing data on \cite[p. 1165-1166]{Mundet_Tian_2009} can be used here to construct gluing data at new interior nodes and new boundary nodes of type 1. For new interior nodes, it is the same as in \cite{Mundet_Tian_2009}. So we only define the gluing data at new boundary nodes of type 1.

Recall the definition of $\gamma_{u, w, \epsilon}(\theta): P_y\to P_y'$, which only requires the convergence of the underlying curve. Let $d$ be a distance function on ${\rm Iso}(P_y, P_{y'})$. By the convergence of the connection, we have that
$$
\lim_{\epsilon\to 0} \left(\sup_{0<\epsilon_1<\epsilon_2<\epsilon} \limsup_{u\to\infty} d(\gamma_{u, w, \epsilon_1}(\theta), \gamma_{u, w, \epsilon_2}(\theta)) \right)=0.
$$
By Stokes' theorem, we also have that for $\theta_1, \theta_2\in [0, \pi]$,
$$
\lim_{\epsilon\to 0} \left(\limsup_{u\to\infty} d(\gamma_{u, w, \epsilon}(\theta_1), \gamma_{u, w, \epsilon}(\theta_2))\right)=0.
$$
Thus there exists a unique isomorphism $g_w: P_y\to P_{y'}$ such that
$$
\lim_{\epsilon\to 0}\left(\limsup_{u\to\infty} d(g_w, \gamma_{u, w, \epsilon}(\theta))\right)=0
$$
for all $\theta\in [0, \pi]$. We define the gluing data at the node $w$ to be $g_w$, and the convergence condition for gluing data at the new node $w$ is automatically satisfied.

\subsection{Constructing the limit, second part}

We construct chains of gradient segments here. The construction of chains of gradient segments at interior nodes is given in \cite[Section 8.7, 8.8]{Mundet_Tian_2009}. Then we only consider the case at type-2 boundary nodes. 

Let $w''$ be such a node of $C$. If $w''$ is a new node, and let $\lambda \in i{\mb R}$ be the residue of $A$ at $w''$, with respect to some trivialization. Then we distinguish two cases:
\begin{itemize}
\item $\lambda\notin \Lambda_{cr}$. Then the limit orbit of $\phi$ at $w''$ is in the fixed point set. Let $N_u=[0, r_u]\times S^1$ be the neck which shrink to $w''$ and $N_u^\tau= [\tau, r_u]\times S^1$. Theorem \ref{thm40} implies that
$$
\lim_{\tau\to \infty} \left(  \limsup_{u\to \infty} {\rm diam}_{S^1}(\varphi_u(N_u^\tau)) \right)=0.
$$
Then we can define the chain of gradient segment to be the degenerate one.

\item $\lambda\in \Lambda_{cr}$. In this case the chain of gradient segments is given essentially by Theorem \ref{thm4.1}, \ref{thm4.2} and \ref{thm26}, which is the same as the construction in \cite[Section 8.7]{Mundet_Tian_2009}.

\end{itemize}

If $w''$ is an old node, then the construction of the chain of gradient segments at $w''$ is just a tautology of the definition of the convergence of $(c, L)$-STHM's near old type-2 boundary nodes.

It is easy to check that one end of the chain of gradient segments lies in $L$, which satisfies our boundary condition in this case.

\subsection{The limit is a $(c, L)$-STHM}

So far we have constructed every ingredient necessary for a $(c, L)$-STHM. However, the definitions of $(c,L)$-STHM and the convergence are so long that we want to check every condition to make sure the limit is indeed a $(c, L)$-STHM and the subsequence does converge to the limit. This and the next subsection also serve as a summary of our construction.

The first 5 conditions of being a $(c, L)$-STHM are automatically satisfied by the limit ${\mc C}$. The stability condition is also established when we construct the underlying curve of ${\mc C}$.

\subsubsection{Critical holonomy at interior marked points}

Suppose $z_u\in {\bf z}_u$ converges to $z\in {\bf z}$. Take $\epsilon$ small and $D(\epsilon)$ to be the closed disk centered at $z$ with radius $\epsilon$, which can be identified canonically to a subset $D(\epsilon)_u$ of $C_u$. For big enough $u$, $z_u$ is in the interior of $D(\epsilon)_u$ and it contains no other marked point. Let $h_{u, \epsilon}$ be the holonomy of $A_u$ along the loop $\partial D(\epsilon)_u$ and $h_\epsilon$ be the holonomy of $A$ along $\partial D(\epsilon)$. Let $h_u$ be the limit holonomy of $A_u$ around $z_u$. Then we have
\begin{align}\label{holonomy}
h_{u,\epsilon} \exp\left( \int_{D(\epsilon)_u} F_{A_u} \right)=h_u.
\end{align}
By the convergence, we know that $h_{u, \epsilon}$ converges to $h_\epsilon$, while since $F_{A_u}$ is uniformly bounded, $\exp(\int_{D(\epsilon)_u} F_{A_u})$ converges to $\exp(\int_{D(\epsilon)} F_A)$. Hence the left hand side of (\ref{holonomy}) converges to the limit holonomy of $A$ around $z$. While since the set of critical holonomies is discrete, the limit holonomy of $A$ around $z$ must be critical and equal to $h_u$ for large $u$.

\subsubsection{Monotonicity of chains of gradient segments}

The monotonicity states that, for an interior node or type-2 boundary node $w\in C^{st}$, the chains of gradient segments connecting the connecting bubbles should be all downward or all upward. In the interiod node case, it has a proof in \cite[Section 8.7]{Mundet_Tian_2009}; in the case of type-2 boundary node, it follows from a similar estimate. We omit the details.

\subsubsection{Stablizer of chains of gradient segments} This condition is also automatically satisfied when the monnotone chains of gradient segments appear.

\subsubsection{Matching conditions}

Among various matching conditions, the only one which is not quite clear is that at boundary nodes of type 1.

If $w'\in C$ is an old boundary node of type 1, then the convergence $\rho_{K, u}^*\iota_{K, u}^*(A_u, \phi_u)$ to $(A, \phi)$ can be extended over the preimages of $w'$ in the normalization of $C$. By passing to a subsequence, we can also assume that the gluing data at $w'$ converges. Hence the matching condition at $w'$ is preserved in the limit.

Now suppose $w'\in C$ is a new boundary node of type 1. Let $y, y'$ be its preimages in the normalization $C'$. Recall the setting and the diagram
$$\begin{CD}
P_y\times [0, \pi] @> \iota_\epsilon>> P_{y, \epsilon} @> \rho_u>> P_{u, y, \epsilon}\\
@V g_{w'}\times r VV @. @VV g_{u, w', \epsilon}V\\
P_{y'}\times [0, \pi] @> \iota_\epsilon' >> P_{y', \epsilon} @> \rho_u >> P_{u, y', \epsilon}.
\end{CD}
$$

The gluing data at $w'$ is defined to be the limit
$$g_{w'}:=\lim_{\epsilon\to 0}\left(\lim_{u\to \infty} \gamma_{u, w', \epsilon}(\theta)\right).
$$

Let $\phi_{y, \epsilon}$(resp. $\phi_{y', \epsilon}$) to be the restriction of $\phi$ to $P_{y, \epsilon}$(resp. $P_{y', \epsilon}$). We have that for any $p\in P_y$ and $\theta\in [0, \pi]$
\begin{align}
\lim_{\epsilon\to 0} d\left(\phi_y(p), \phi_{y, \epsilon}\circ \iota_\epsilon(p, \theta)\right)=0
\end{align}
and that $\lim_{u\to \infty} \phi_{u, y, \epsilon}\circ \rho_u\to \phi_{y, \epsilon}$. Moreover,
$$\lim_{\epsilon\to 0} \limsup_{u\to\infty} d\left( {\rm Im}\phi_{u, y, \epsilon}, {\rm Im}\phi_{u, y', \epsilon} \right)=0.$$
Hence for any $p\in P_y$, $\theta\in [0, \pi]$,
\begin{multline*}
d\left( \phi_y(p), \phi_{y'}(g_w(p)) \right) \leq d \left( \phi_y(p), \phi_{y, \epsilon}\circ \iota_\epsilon(p, \theta)\right)+ d \left( \phi_{y, \epsilon}\circ \iota_\epsilon(p, \theta), \phi_{u, y, \epsilon}\circ \rho_u\circ \iota_\epsilon(p, \theta) \right)\\
+ d \left(\phi_{u, y, \epsilon}\circ \rho_u\circ \iota_\epsilon(p, \theta), \phi_{u, y', \epsilon}\circ g_{u, z, \epsilon}\circ \rho_u\circ \iota_\epsilon(p, \theta) \right)\\
+d \left( \phi_{u, y', \epsilon}\circ g_{u, z, \epsilon}\circ \rho_u\circ \iota_\epsilon(p, \theta), \phi_{y',\epsilon}\circ \rho_u^{-1}\circ g_{u, z, \epsilon}\circ \rho_u\circ \iota_\epsilon(p, \theta) \right)\\
+ d \left(\phi_{y',\epsilon}\circ \rho_u^{-1}\circ g_{u, z, \epsilon}\circ \rho_u\circ \iota_\epsilon(p, \theta), \phi_{y'} \circ \gamma_{u, w, \epsilon}(p, \theta) \right)+ d \left( \phi_{y'}\circ \gamma_{u, w, \epsilon}(p, \theta), \phi_{y'}\circ g_w(p) \right).
\end{multline*}
Apply $\limsup_{u\to \infty}$ to the right hand side, one obtains
\begin{multline*}
d\left( \phi_y(p), \phi_{y'}(g_w(p)) \right)
\leq d \left( \phi_y(p), \phi_{y, \epsilon}\circ \iota_\epsilon (p, \theta)\right)+ \limsup_{u\to\infty} d \left(\phi_{y}\circ \iota_\epsilon(p, \theta), \phi_{u, y', \epsilon}\circ g_{u, z, \epsilon}\circ \rho_u\circ \iota_\epsilon(p, \theta) \right)\\
+ \limsup_{u\to \infty} d \left(\phi_{y',\epsilon}\circ \rho_u^{-1}\circ g_{u, z, \epsilon}\circ \rho_u\circ \iota_\epsilon(p, \theta), \phi_{y'} \circ \gamma_{u, w, \epsilon}(p, \theta) \right)
+ \limsup_{u\to \infty} d \left( \phi_{y'}\circ \gamma_{u, w, \epsilon}(p, \theta), \phi_{y'}\circ g_w(p) \right)\\
\leq d \left( \phi_y(p), \phi_{y, \epsilon}\circ \iota_\epsilon(p, \theta)\right)+ d \left(\phi_{y}\circ \iota_\epsilon(p, \theta), \phi_{y'}(P_{y', \epsilon})\right)+ d\left( \phi_{y'}(P_{y', \epsilon}), \phi_{y'}(P_{y'})\right).
\end{multline*}
Then let $\epsilon\to 0$, we see that $\phi_y=\phi_{y'}\circ g_w$.

\subsection{The sequence does converge to the limit}

We check if the subsequence and the limit $(c, L)$-STHM satisfy the definition in Subsection \ref{defnofconvergence}.
\begin{enumerate}
\item The convergence of the underlying curve is automatic. 

\item The convergence of the connections and sections is also automatic.

\item The convergence near the interior marked points is satisfied since we have found all bubbles near interior marked points.

\item The convergence near the new interior nodes is essentially covered in \cite{Mundet_Tian_2009}.

\item The convergence near the new boundary nodes of type 1.

\begin{itemize}

\item The convergence of the gluing data is automatic.

\item (\ref{914}) is satisfied because we have found all possible bubbles near new boundary nodes of type 1, and (\ref{914}) follows by Corollary \ref{cor58}.

\end{itemize}

\item The convergence near new boundary nodes of type 2 is essentially the same as the convergence near new interior nodes.

\item The convergence near old interior nodes is covered in \cite{Mundet_Tian_2009}.

\item The convergence near old boundary nodes of type 1 is automatic by the construction.

\item The convergence near old boundary nodes of type 2 is essentially the same as the convergence near old interior nodes.

\end{enumerate}

So the conditions of convergence are all satisfied.

	\bibliography{symplectic_ref}
	
	\bibliographystyle{amsplain}

	{\it 
	Guangbo Xu\\
	
	Department of mathematics, Princeton University, Fine Hall, Washington Road, Princeton, NJ 08544, USA\\
	
	Email: guangbox@math.princeton.edu
	
	}

	\end{document}